\newcommand{\Int}{\displaystyle \int}
\def\dF{\delta\!F}
\def\dG{\delta\!G}
\def\dH{\delta\!H}
\def\dR{\delta\!R}
\def\dU{\delta\!U}
\def\dA{\delta\!A}
\def\dZ{\delta\!Z}
\def\da{\delta\!a}
\def\du{\delta\!u}
\def\dv{\delta\!v}
\def\dP{\delta\!P}
\def\dPi{\delta\!\Pi}
\def\dg{\delta\!g}
\def\df{\delta\!f}
\let\pa=\partial
\let\al=\alpha
\let\d=\delta
\let\e=\varepsilon
\let\r=\rho
\let\s=\sigma
\let\f=\frac
\let\p=\psi
\let\D=\Delta
\let\Om=\Omega
\let\wt=\widetilde
\def\cA{{\mathcal A}}
\def\cB{{\mathcal B}}
\def\cC{{\mathcal C}}
\def\cF{{\mathcal F}}
\def\cM{{\mathcal M}}
\def\cR{{\mathcal R}}
\def\cS{{\mathcal S}}
\def\pa{\partial}
\def\grad{\nabla}
\def\ga{\gamma}
\def\virgp{\raise 2pt\hbox{,}}
\def\cdotpv{\raise 2pt\hbox{;}}
\def\eqdefa{\buildrel\hbox{\footnotesize def}\over =}
\def\Id{\mathop{\rm Id}\nolimits}
\def\C{\mathop{\mathbb C\kern 0pt}\nolimits}
\def\DD{\mathop{\mathbb D\kern 0pt}\nolimits}
\def\EE{\mathop{{\mathbb E \kern 0pt}}\nolimits}
\def\K{\mathop{\mathbb K\kern 0pt}\nolimits}
\def\N{\mathop{\mathbb N\kern 0pt}\nolimits}
\def\Q{\mathop{\mathbb Q\kern 0pt}\nolimits}
\def\R{\mathop{\mathbb R\kern 0pt}\nolimits}
\def\SS{\mathop{\mathbb S\kern 0pt}\nolimits}
\def\ZZ{\mathop{\mathbb Z\kern 0pt}\nolimits}
\def\TT{\mathop{\mathbb T\kern 0pt}\nolimits}
\def\P{\mathop{\mathbb P\kern 0pt}\nolimits}
\newcommand{\la}{\lambda}
\newcommand{\Z}{{\ZZ}}
\def\dive{\mathop{\rm div}\nolimits}
\def\na{\nabla}
\def\p{\partial}
\def\th{\theta}
\newcommand{\beq}{\begin{equation}}
\newcommand{\eeq}{\end{equation}}
\newcommand{\ben}{\begin{eqnarray}}
\newcommand{\een}{\end{eqnarray}}
\newcommand{\beno}{\begin{eqnarray*}}
\newcommand{\eeno}{\end{eqnarray*}}
\newcommand{\andf}{\quad\hbox{and}\quad}
\newtheorem{defi}{Definition}[section]
\newtheorem{thm}{Theorem}[section]
\newtheorem{lem}{Lemma}[section]
\newtheorem{rmk}{Remark}[section]
\newtheorem{cor}{Corollary}[section]
\newtheorem{prop}{Proposition}[section]
\begin{document}
\title[Global solutions to inhomogeneous Navier-Stokes equations]
{Inhomogeneous Navier-Stokes equations in
the half-space,  with only bounded density}
\author[R. Danchin]{Rapha\"el Danchin}
\address [R. Danchin]
{Universit\'{e} Paris-Est,  LAMA (UMR 8050), UPEMLV, UPEC, CNRS, Institut Universitaire de France,
 61 avenue du G\'{e}n\'{e}ral de Gaulle, 94010 Cr\'{e}teil Cedex, France.}
\email{danchin@univ-paris12.fr}
\author[P. Zhang]{Ping Zhang}%
\address[P. Zhang]
 {Academy of Mathematics and Systems Science and  Hua Loo-Keng Key Laboratory of Mathematics,
  Chinese Academy of Sciences, Beijing 100190, CHINA} \email{zp@amss.ac.cn}
\date\today

\begin{abstract}
In this paper, we establish  the global existence and uniqueness of
solutions to the inhomogeneous Navier-Stokes system in the
half-space. The initial density only has to be bounded and close
enough to a positive constant,  the  initial velocity belongs to
some critical Besov space, and the $L^\infty$ norm of the
inhomogeneity plus the critical norm to the horizontal component of
the initial velocity has to be very small compared to the
exponential of the norm to the vertical component of the initial
velocity. With a little bit more regularity for the initial
velocity, those solutions are proved to be unique. In the last
section of the paper, our results are partially extended to the
bounded domain case.
\end{abstract}
\maketitle

\noindent {\sl Keywords:} Inhomogeneous Navier-Stokes equations, Stokes system,
critical Besov spaces, half-space, Lagrangian coordinates.
\vskip 0.2cm

\noindent {\sl AMS Subject Classification (2000):} 35Q30, 76D03

\setcounter{equation}{0}
\section{Introduction}

We are concerned with  the  global well-posedness issue for  the
initial boundary value problem pertaining to  the following incompressible inhomogeneous Navier-Stokes
equations:
\begin{equation}
 \left\{\begin{array}{lll}
\displaystyle \pa_t \rho + \dive (\rho u)=0&\hbox{in}&\R_+\times\Omega, \\
\displaystyle \pa_t (\rho u) + \dive (\rho u\otimes u) -\mu\D u+\grad\Pi=0&\hbox{in}&\R_+\times\Omega, \\
\displaystyle \dive\, u = 0&\hbox{in}&\R_+\times\Omega, \\
\displaystyle u=0&\hbox{on}&\R_+\times\partial\Omega,\\
\displaystyle \rho|_{t=0}=\rho_0,\quad \rho u|_{t=0}=\r_0u_0&\hbox{in}&\Omega,
\end{array}\right. \label{1.1gh}
\end{equation}
where $\rho=\rho(t,x)\in\R_+,$ $u=u(t,x)\in\R^d$ and
$\Pi=\Pi(t,x)\in\R$  stand for the density, velocity field  and
pressure of the fluid, respectively, depending on  the time variable
$t\in\R_+$ and on the space variables $x\in\Omega.$ The positive
real number $\mu$ stands for the viscosity coefficient. We mainly
consider the case  where $\Omega$ is  the half-space $\R^d_+,$
except in the last section of the paper where it stands for a smooth
bounded domain of $\R^d$ $(d\geq2).$ \smallbreak The above system
describes a  fluid that is  incompressible but has nonconstant
density. Basic examples are mixture of incompressible and non
reactant flows, flows with complex structure (e.g. blood flow or
model of rivers), fluids containing a melted substance, etc.
 \medbreak
 A number of recent works have been dedicated to the mathematical study
of the above system. Global weak solutions with finite energy have
been constructed by J. Simon in \cite{Simon} (see also the book by
P.-L. Lions \cite{LP} for the variable viscosity case). In the case
of smooth data with no vacuum, the existence of strong unique
solutions goes back to the work of O. Ladyzhenskaya and V.
Solonnikov in \cite{LS}. More recently, the first author  \cite{D1}
established the well-posedness of the above system in the whole
space $\R^d$ in the so-called \emph{critical functional framework}
for small perturbations of some positive constant density. The basic
idea is to use functional spaces (or norms) that have the same \emph{scaling
invariance} as \eqref{1.1gh}, namely
\begin{equation}\label{eq:scalinginhomo}
(\rho,u,\Pi)(t,x)\longmapsto (\rho,\lambda u,\lambda^2\Pi)
(\lambda^2 t,\lambda x),\qquad
(\rho_0,u_0)(x)\longmapsto (\rho_0,\lambda u_0)(\lambda x).
\end{equation}
More precisely, in \cite{D1}, global well-posedness was established assuming that
for some small enough constant $c$ one has
$$\|\rho_0-1\|_ {\dot B^{\frac d2}_{2,\infty}(\R^d)\cap L^\infty(\R^d)}+\mu^{-1}\|u_0\|_{\dot B^{-1+\frac d2}_{2,1}(\R^d)}\leq c.$$
Above $\dot B^{\sigma}_{p,r}(\R^d)$ stands for a homogeneous Besov
space on $\R^d$ (see Definition \ref{def:Besov} below). This result
was extended to more general Besov spaces  by H. Abidi in
\cite{Abidi}, and H. Abidi and M. Paicu in \cite{AP}, and to the
half-space setting   in \cite{DM}. The smallness assumption on the
initial density was removed recently in \cite{AGZ2,AGZ3}.

Very recently, in a joint work with M. Paicu in \cite{PZ2}, the second author established that  if $p\in(1,2d)$
 then there exists a constant $c$ so that  for any data $(\rho_0,u_0)$ satisfying
 $$\Bigl(\|\rho_0-1\|_ {\dot B^{\frac d{p}}_{p,1}(\R^d)}+\mu^{-1}\|u_0^h\|_{\dot
B^{-1+\frac{d}p}_{p,1}(\R^d)}\Bigr)\exp\Bigl(c^{-1}\mu^{-2}\|u_0^d\|_{\dot B^{-1+\frac{d}p}_{p,1}(\R^d)}\Bigr)\leq c$$
the incompressible inhomogeneous Navier-Stokes equations admit a global unique solution.
Above,   we agreed that $u_0^h=(u_0^1,\cdots,u_0^{d-1})$ and $u_0^d$ stand for the horizontal and vertical  components of $u_0.$
\smallbreak
Given that in all those works the density has to be at least in the Besov space $\dot B^{\f dp}_{p,\infty}(\R^d),$
  one cannot capture  discontinuities  across an hypersurface. In effect,
  the Besov regularity of the characteristic function of a smooth domain is only $\dot B^{\f1p}_{p,\infty}(\R^d).$
  Therefore, those results do not apply to a  mixture of two fluids with different densities.
\smallbreak
In \cite{dm},  the first author and P. Mucha noticed that
it was possible to establish existence \emph{and} uniqueness of a
solution in the case of a small discontinuity, in  a critical
functional framework. More precisely, the global existence and
uniqueness was established for any data $(\rho_0,u_0)$ such that for
some $p\in[1,2d)$ and small enough constant $c,$ we have
\begin{equation}\label{eq:small}
\|\rho_0-1\|_{\cM(\dot B^{-1+\frac
dp}_{p,1}(\R^d))}+\mu^{-1}\|u_0\|_{\dot B^{-1+\frac
dp}_{p,1}(\R^d)}\leq c.
\end{equation}
Above, $\|\cdot\|_{\cM(\dot B^{-1+\frac dp}_{p,1}(\R^d))}$ is the
\emph{multiplier norm} associated to the Besov space $\dot
B^{-1+\frac dp}_{p,1}(\R^d),$ which turns out to be finite for
characteristic functions of $C^1$ domains whenever $p>d-1.$
Therefore, initial densities with a discontinuity across an
interface may be considered (although the jump has to be small owing
to \eqref{eq:small}). As observed later on in \cite{dm2},  large
discontinuities may be considered if the initial  velocity is
smoother. In fact, therein, \emph{any} initial density  bounded and
bounded away from $0$ is admissible. Let us emphasize that in both
works (\cite{dm} and \cite{dm2}), using Lagrangian coordinates was
the key to the proof of uniqueness. \smallbreak
 A natural question is whether it is still possible to get existence \emph{and}
uniqueness in a critical functional framework where  $\rho_0$ is
only bounded and bounded away from zero. As regards the existence issue, a
positive answer has been given recently by J. Huang, M. Paicu and
the second author in \cite{HPZ3}, in the whole space setting, and
uniqueness was obtained if assuming slightly more regularity for the
velocity field. Let us emphasize that once again using Lagrangian
coordinates is the key to uniqueness. Therefore, assumptions on the
initial velocity have to ensure the velocity $u$ to have gradient
 in $L^1_{loc}(\R_+;L^\infty(\R^d))$ for  Eulerian and Lagrangian formulations of the system
 to be  equivalent.  While this property of the velocity field holds true if $u_0$ is in  $\dot B^{-1+\f dp}_{p,1}(\R^d),$
 it fails  if $u_0$ is only in $\dot B^{-1+\f dp}_{p,r}(\R^d)$ for some $r>1.$
As a matter of fact, the question of uniqueness in a critical  Besov framework for the velocity is open unless $r=1$ (but this latter
case requires  stronger assumptions on the density, as pointed out in  \cite{dm}).
\medbreak In the present work, we aim at  extending the results of
\cite{HPZ3} to the half-space setting. Because we shall consider
only perturbations of the reference density $1,$ it is natural to
set  $a=1/\rho-1$ so that  System \eqref{1.1gh} translates into
\begin{equation}\label{INS}
 \quad\left\{\begin{array}{lll}
\displaystyle \pa_t a + u \cdot \grad a=0&\hbox{in}&\R_+\times\Omega, \\
\displaystyle \pa_t u + u \cdot \grad u+ (1+a)(\grad\Pi-\mu\D u)=0&\hbox{in}&\R_+\times\Omega, \\
\displaystyle \dive\, u = 0&\hbox{in}&\R_+\times\Omega, \\
\displaystyle u=0&\hbox{on}&\R_+\times\partial\Omega,\\
 \displaystyle (a, u)|_{t=0}=(a_0, u_{0})&\hbox{in}&\Omega.
\end{array}\right.
\end{equation}
As in the whole space case considered in \cite{HPZ3}, the functional
framework for solving \eqref{INS} is motivated by classical maximal
regularity estimates for the evolutionary Stokes system. Indeed, the velocity field may be seen as the solution to the following
Stokes system:
\begin{equation}\label{eq:velocity}
\p_tu-\mu\Delta u+\nabla\Pi=-u\cdot\nabla u+a(\mu\Delta u-\nabla\Pi),\qquad
\dive u=0.
\end{equation}
In the whole space case,  we have for any $1<p,r<\infty$ and $t>0,$
$$
\|(\p_tu,\mu\nabla^2u,\nabla\Pi)\|_{L_t^r(L^p)}\leq C\Bigl(\mu^{1-\frac1r}\|u_0\|_{\dot B^{2-\f2r}_{p,r}}+\|u\cdot\nabla u\|_{L_t^r(L^p)}
+\|a(\mu\Delta u-\nabla\Pi)\|_{L_t^r(L^p)}\Bigr)
$$
where we have used the notation
$\|z\|_{L_t^r(L^p)}\eqdefa\|z\|_{L^r((0,t);L^p(\R^d))}.$ \smallbreak
Given that $\|a(t)\|_{L^\infty}=\|a_0\|_{L^\infty}$ for all time, it
is clear that the last term may be absorbed by the l.h.s. if
$\|a_0\|_{L^\infty}$ is small enough. Now, it is standard (see
Corollary \ref{c:besov} below) that in the simpler case where $u$
just solves the heat equation with initial data $u_0,$ having
$\Delta u$ in $L^r(0,T;L^p)$ is equivalent to $u_0\in \dot B^{-1+\f
dp}_{p,r}$ \emph{provided  $-1+\f dp=2-\f2r\cdotp$} This implies
that $p$ and $r$ have to be interrelated through $p=\f{dr}{3r-2},$
and thus  $\f d3<p<d.$

\smallbreak Here we aim at extending this simple  idea to the half-space
setting, or to  $C^2$ bounded domains.

 \medbreak
In the half-space case, according to the above heuristics and because homogeneous
Dirichlet boundary conditions are prescribed for the velocity,  the natural  solution space for
$(u,\nabla\Pi)$ is
$$
X^{p,r}_T\eqdefa\Bigl\{(u,\nabla\Pi)\ \hbox{ with }\ u\in\cC([0,T];\dot
\cB^{2-\f2r}_{p,r}(\R^d_+))\quad\hbox{and}\quad
\p_tu,\nabla^2u,\nabla\Pi\in L^r(0,T;L^p_+)\Bigr\},
$$
where $L^p_+\eqdefa L^p(\R^d_+)$ denotes  the Lebesgue space over
$\R^d_+,$ and  $\dot \cB^{2-\f2r}_{p,r}(\R^d_+)$ stands for the set
of divergence free vector fields  on $\R^d_+$ with Besov regularity
$\dot B^{2-\f2r}_{p,r}(\R^d_+)$ and null trace at the boundary (the
exact meaning will be given in Definition \ref{d:nulltrace} below).
\medbreak We also  introduce the following norm for all $T>0$:
\beq\label{normxpr} \begin{split}&
\|(u,\nabla\Pi)\|_{X^{p,r}_T}\eqdefa \|u\|_{\mathfrak{X}^{p,r}_T}+
\|\na\Pi\|_{L^r(0,T;L^{p}_+)}\andf\\
&\|u\|_{\mathfrak{X}^{p,r}_T}\eqdefa
\mu^{1-\f1{r}}\|u\|_{L^\infty(0,T;\dot B^{2-\f2r}_{p,r}(\R^d_+))}+
\|(\p_tu,\mu\na^2u)\|_{L^r(0,T;L^{p}_+)}, \end{split}\eeq and agree
that $X^{p,r}$ and $\|\cdot\|_{X^{p,r}}$ correspond to the above
definition with $T=+\infty.$
\medbreak
Before stating our main results,  let us clarify what we mean  by \emph{a weak solution to \eqref{INS}}:
\begin{defi}\label{defi2.1} {\sl  A global weak solution of \eqref{INS} is any couple $(a,u)$ satisfying:
\begin{itemize}
\item for any test function $\phi\in
C^\infty_c([0,\infty)\times\overline{\Omega}),$ there holds
\beq\label{def2.1a}\begin{aligned}
\int_0^\infty\!\!\!\int_{\Omega}a(\p_t\phi&+u\cdot\na\phi)\,dx\,dt+\int_{\Omega}\phi(0,\cdot)a_0\,dx=0,\\
&\int_0^\infty\!\!\!\int_{\Omega}\phi\dive u \,dx\,dt=0, \end{aligned}\eeq

\item for any vector valued function $\Phi=(\Phi^1,\cdots,\Phi^d)\in
C_c^\infty([0,\infty)\times\overline{\Omega})$, one has
\beq\label{def2.1b}
\int_0^\infty\!\!\!\int_{\Omega}\Bigl\{u\cdot\p_t\Phi+u\otimes u
:\na \Phi +(1+a)\bigl(\mu\D u
-\na\Pi\bigr)\Phi\Bigr\}\,dx\,dt+\int_{\Omega}u_0\cdot\Phi(0,\cdot)\,dx=0.
\eeq
\end{itemize}}
\end{defi}

Our main statement reads:
\begin{thm}\label{th:main}
{\sl  Let $a_0\in L^\infty_+$ and $ u_0=(u_0^h,u_0^d)\in
\dot{\cB}^{-1+\f{d}{p}}_{p,r}(\R^d_+)\cap
\dot{\cB}^{-1+\f{d}{p}}_{\wt p,r}(\R^d_+)$ with
$p\eqdefa\f{dr}{3r-2}\leq\wt p\leq \frac{dr}{r-1}$ and  $r\in
(1,\infty).$
  There exist two positive constants $c_0=c_0(r,d)$ and $c_1=c_1(r,d)$  so that if
\beq \label{small1} \eta_0\eqdefa
\bigl(\mu\|a_0\|_{L^\infty_+}+\|u_0^h\|_{\dot{B}^{-1+\f{d}{p}}_{p,r}(\R^d_+)}\bigr)\exp
\Bigl(c_1\mu^{-2r}\|u_0^d\|_{\dot{B}^{-1+\f{d}{p}}_{p,r}(\R^d_+)}^{2r}\Bigr)
\leq c_0\mu \eeq then \eqref{INS} has a global  solution
$(a,u,\nabla\Pi)$ in the meaning of Definition \ref{defi2.1} with
$\Omega=\R^d_+,$ satisfying
$\|a(t)\|_{L^\infty_+}=\|a_0\|_{L^\infty_+}$ for all $t>0$ and $u\in
X^{p,r}\cap X^{\wt p,r}.$ Moreover, there exist $C_i=C_i(r,d),$
$i=1,2,3,4,$   so that
  \begin{equation}\label{thm1aa}
\begin{split}
  &\|u^h\|_{\mathfrak{X}^{p,r}}+\mu^{1-\f1{2r}}\|\nabla u^h\|_{L^{2r}(\R_+;L^{\f{dr}{2r-1}}_+)}\leq C_1\mu^{1-\f1r}\eta_0,\\
&\|(u,\nabla\Pi)\|_{X^{p,r}}+\mu^{1-\f1{2r}}\|\nabla
u\|_{L^{2r}(\R_+;L^{\f{dr}{2r-1}}_+)}\leq C_2\mu^{1-\f1r}
  \|u_0\|_{\dot{B}^{-1+\f{d}{p}}_{p,r}(\R^d_+)},
\end{split}
\end{equation}
  and, if $\f1\alpha\eqdefa \f1{\wt p}-\f{r-1}{dr},$
  \begin{equation}\label{thm1ab}
  \begin{split}
  \|(u,\nabla\Pi)\|_{X^{\wt p,r}}+\mu^{1-\f1{2r}}\|\na
  u\|_{L^{2r}(\R_+;L^{\alpha}_+)}\leq C_3\mu^{1-\f1r}&\|u_0\|_{\dot{B}^{-1+\f{d}{p}}_{\wt
 p,r}(\R^d_+)}\\
 &\times\exp\Bigl(C_4\mu^{-2r}{\|u_0^d\|_{\dot{B}^{-1+\f{d}{p}}_{p,r}(\R^d_+)}^{2r}}\mu^{-2r}\Bigr). \end{split}  \end{equation}
    If  in addition $\wt p>d$ then $\nabla u\in
L^{q}(\R_+;L^\infty_+)$  with $q=\frac{2\wt p}{d+(\f2r\,-1)\wt
p}$ and
\begin{equation}\label{thm1b}
\mu^{\f1q}\|\nabla u\|_{L^{q}(\R_+;L^\infty_+)}\leq
C_3\|u_0\|_{\dot{B}^{-1+\f{d}{p}}_{\wt
p,r}(\R^d_+)}\exp\Bigl(C_4\mu^{-2r}{\|u_0^d\|_{\dot{B}^{-1+\f{d}{p}}_{p,r}(\R^d_+)}^{2r}}\Bigr),
\end{equation}
and uniqueness holds true.
}\end{thm}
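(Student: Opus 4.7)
The proof follows a standard pattern: construct smooth approximate solutions, derive uniform a priori estimates matching \eqref{thm1aa}--\eqref{thm1b}, pass to the limit by compactness, and handle uniqueness separately through a Lagrangian reformulation. The crux is the a priori estimate, which must propagate the \emph{anisotropic} smallness of the data: $\|u_0^h\|$ and $\|a_0\|_{L^\infty_+}$ remain small for all time, while $\|u_0^d\|$ may be arbitrary at the price of an exponential factor.

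The cornerstone is the maximal regularity estimate for the half-space Stokes system established earlier in the paper. Viewing the velocity as the solution to
\begin{equation*}
\p_t u - \mu\D u + \na\Pi = -u\cdot\na u + a(\mu\D u - \na\Pi),\quad \dive u=0,\quad u|_{\p\R^d_+}=0,
\end{equation*}
this estimate bounds $\|(u,\na\Pi)\|_{X^{p,r}_T}$ by the initial data plus the source. Since the transport equation preserves $\|a(t)\|_{L^\infty_+}=\|a_0\|_{L^\infty_+}$, the term $a(\mu\D u-\na\Pi)$ is absorbed by the left-hand side as soon as $\|a_0\|_{L^\infty_+}$ is small enough, a condition guaranteed by \eqref{small1}. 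It remains to treat the convection.

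The key step is to split the velocity and write $u\cdot\na u^h=u^h\cdot\na_h u^h+u^d\,\p_d u^h$ (and similarly for $u^d$). The horizontal self-interaction $u^h\cdot\na_h u^h$ is quadratic in the \emph{small} quantity $u^h$ and is absorbed via a bilinear product estimate in $X^{p,r}$ combined with the auxiliary gradient bound $\|\na u^h\|_{L^{2r}(L^{dr/(2r-1)}_+)}$ appearing in \eqref{thm1aa}. The cross term $u^d\,\p_d u^h$ is \emph{linear} in the potentially large $u^d$; a H\"older-in-time estimate and a Gronwall argument applied to $\|u^h\|_{\mathfrak{X}^{p,r}_T}$ then produce the integrating factor $\exp\bigl(c\mu^{-2r}\|u^d\|_{\mathfrak{X}^{p,r}}^{2r}\bigr)$. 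In parallel, the equation for $u^d$ is handled as a Stokes system whose nonlinearity $u^h\cdot\na_h u^d+u^d\,\p_d u^d$ is rewritten using $\p_d u^d=-\dive_h u^h$, so that both pieces carry an extra factor of $u^h$ or $\na u^h$ and are thus small; this yields $\|u^d\|_{\mathfrak{X}^{p,r}}\lesssim\mu^{1-1/r}\|u_0^d\|$. Combining both bounds closes the bootstrap and delivers \eqref{thm1aa}. The $X^{\wt p,r}$ estimate \eqref{thm1ab} is then obtained by rerunning the same analysis, this time linearly, using the already-controlled $u$ as given data on the right-hand side; the endpoint integrability \eqref{thm1b} in $L^q(L^\infty_+)$ follows by interpolation between the $X^{\wt p,r}$ bound and the auxiliary gradient bound.

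Existence is obtained by compactness applied to, e.g., a Friedrichs truncation scheme, the transport equation for $a$ passing to the limit by standard theory for rough transport. For uniqueness under the additional assumption $\wt p>d$, the estimate \eqref{thm1b} gives $\na u\in L^q(\R_+;L^\infty_+)$, hence the flow of $u$ is bi-Lipschitz and the change to Lagrangian coordinates is well defined. In those coordinates the density becomes constant in time and \eqref{INS} reduces to a Stokes-type system with small $L^\infty$ coefficients, for which uniqueness is a contraction argument in $X^{\wt p,r}$. The main obstacle I anticipate is the closing of the Gronwall loop for $u^h$: every nonlinear contribution must be either quadratic-small (horizontal origin) or absorbable into the exponential (vertical origin), which requires careful anisotropic bilinear estimates in half-space Besov spaces that respect the correct scaling and accommodate the Dirichlet boundary condition.
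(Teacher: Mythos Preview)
Your overall strategy matches the paper's: regularize the data, prove uniform anisotropic estimates via the horizontal/vertical splitting with an exponential weight absorbing the large vertical component, pass to the limit by compactness (the paper uses strong $L^m_{loc}$ convergence of $a^n$ obtained from the DiPerna--Lions argument $\overline{a^2}=a^2$), and prove uniqueness in Lagrangian coordinates by estimating the difference of two solutions. The rewriting $\partial_d u^d=-\dive_h u^h$ to make the convection in the $u^d$-equation carry a factor of $u^h$ is exactly the mechanism the paper exploits in its iterative scheme.

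There is, however, one half-space--specific point that your outline glosses over and that is essential for the first line of \eqref{thm1aa}. Unlike in $\R^d$, the free Stokes solution $u_L$ in $\R^d_+$ couples horizontal and vertical components through the pressure and the boundary: the explicit solution formula expresses $u_L^h$ via $V_hu_0=u_0^h+Su_0^d$ and $V_du_0=-S\cdot u_0^h+u_0^d$, so a priori $u_L^h$ depends on $u_0^d$. If that dependence persisted, your Gronwall loop for $u^h$ would close with $\|u_0\|$ on the right-hand side rather than $\|u_0^h\|$, and the anisotropic smallness \eqref{small1} would collapse to an isotropic one. The paper resolves this by an algebraic identity (derived from $\dive u_0=0$ and $\gamma u_0^d=0$) showing that $u_L^h$ can in fact be rewritten purely in terms of $u_0^h$; this identity is then fed into an auxiliary proposition for the Stokes system with a prescribed vertical convection $v^d\partial_d u^h$, which produces exactly the exponential weight you describe. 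So the obstacle you anticipate is real, and this decoupling identity is its resolution --- without it the scheme does not close.
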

\begin{rmk}{\sl  We shall extend this statement to a more general critical (or almost critical) Besov setting,
see Theorems \ref{th:main2} and \ref{th:main3} below.} We could also
establish  the local well-posedness of \eqref{INS} with arbitrarily large
velocity and small inhomogeneity. For simplicity, we skip the details here.
\end{rmk}
Let us briefly describe the plan of the rest of the paper. The next
section is devoted to the linearized  velocity equation of
\eqref{INS} \emph{in the half-space}, that is the evolutionary
Stokes system. We first derive an explicit solution formula in the
spirit of that of  S. Ukai in \cite{Ukai}, and then deduce maximal
regularity type estimates similar to those of the whole space. We
consider the general situation with prescribed (possibly nonzero)
value for $\dive u$ as it will be needed when reformulating
\eqref{INS} in Lagrangian coordinates. The next two sections are
devoted to the proof of the existence part of Theorem \ref{th:main},
first under a stronger assumption on the density, and next in the
rough case corresponding to the hypotheses of the theorem. The case
of more general Besov spaces will be examined in Section
\ref{s:general}. The proof of uniqueness is postponed in Section
\ref{s:uniqueness}. In the final section, we partially generalize
Theorem \ref{th:main} to the bounded domain setting. Some technical
lemmas  related to maximal regularity and $L^p(L^q)$ estimates for
the heat equation in the whole space (or Stokes system in bounded
domains)  are presented in Appendix.


\setcounter{equation}{0}

\section{The evolutionary Stokes system in the half-space}\label{s:stokes}

This section is devoted to the study of following  system in the half-space: \beq\label{eq:stokes}
\left\{\begin{array}{lll}
\displaystyle \p_tu-\mu\D u+\na \Pi=f &\quad \mbox{in}\quad &\R_+\times\R^d_+, \\
\displaystyle \dive u=g &\quad \mbox{in}\quad &\R_+\times\R^d_+,\\
\displaystyle u=0 &\quad \mbox{on}\quad &\R_+\times\p\R^{d}_+,\\
\displaystyle u|_{t=0}=u_0 &\quad \mbox{on}\quad &\R^d_+.
\end{array}\right.
\eeq We shall first derive an explicit formula for the solution to
this system, and next prove the key {\it a priori} estimates that
are needed for getting the main results of our paper.

\subsection{A solution formula}

This part extends a prior work by S. Ukai \cite{Ukai} (see also
\cite{CPS}) to the case where there is a source term $f$ in the
velocity equation, and where the divergence constraint is nonhomogeneous. Let us recall that
in \cite{Ukai}, it was assumed that $f=0$ and $g=0$ (but $u$ need
not be zero at the boundary), and that in \cite{CPS} nonzero $f$
was considered (but  still $u$ is divergence free and $f$ has trace zero).  Furthermore,  the
gradient of the pressure was not computed therein, a computation that turns out to be
  essential for us  as
$\nabla\Pi$ appears in the right-hand side of the velocity equation \eqref{eq:velocity}.
\medbreak Before writing out the formula, let  us  introduce a few
notations.   We denote  $\Delta=\sum_{\ell=1}^{d}\p_{x_\ell}^2$ and
$\D_h=\sum_{\ell=1}^{d-1}\p_{x_\ell}^2,$ and define $|D|^{\pm1}$ and  $|D_h|^{\pm1}$ to be the Fourier multipliers with symbol
$$|\xi_h|^{\pm1}=\biggl(\sum_{i=1}^{d-1}\xi_i^2\biggr)^{\pm\f12}\quad\hbox{and}\quad|\xi|^{\pm1}=\biggl(\sum_{i=1}^{d}\xi_i^2\biggr)^{\pm\f12}, \ \hbox{ respectively.}$$
The notations $R_j$ and $S_j$ stand for the Riesz transforms over
$\R^d$ and $\R^{d-1}_h,$ namely
$$R_j\eqdefa\p_j|D|^{-1}\ \hbox{ for }\  j=1,\cdots, d\qquad\hbox{and}\qquad
S_j\eqdefa\p_j|D_h|^{-1}\ \hbox{ for }\  j=1,\cdots, d-1.$$ We
further set $R_h\eqdefa (R_1,\cdots, R_{d-1})$  and $S\eqdefa
(S_1,\cdots,S_{d-1}).$ \medbreak As in \cite{Ukai},  we define the operators $V_d$
and $V_h$ by \beq\label{2.1} V_du\eqdefa -S\cdot
u^h+u^d\quad\mbox{and}\quad V_hu\eqdefa u^h+Su^d. \eeq
We shall see later on that both $V_hu$ and $V_du$ satisfy a heat equation, this is the main
motivation for considering those two quantities.
\medbreak

We denote $x=(x_h,x_d)$ with  $x_h=(x_1,\cdots,x_{d-1}).$ Let $r$ be
the restriction operator from $\R^d$ to $\R^d_+,$ that is $rf\eqdefa
f|_{\R^d_+},$ and $e_0(f), e_a(f), e_s(f)$ be the extension
operators given by \beq \label{2.2}
\begin{aligned} & e_0(f)= \left\{\begin{array}{l}
\displaystyle f\ \ \mbox{for}\ \ x_d\geq 0, \\
\displaystyle 0\ \ \mbox{for}\ \ x_d< 0,
\end{array}\right. \quad e_a(f) =
\left\{\begin{array}{l}
\displaystyle f(x)\ \ \qquad\quad \  \mbox{for}\ \ x_d\geq 0, \\
\displaystyle -f(x_h,-x_d)\ \ \mbox{for}\ \ x_d< 0,
\end{array}\right.\\ \quad\mbox{and}\qquad
&e_s(f)= \left\{\begin{array}{l}
\displaystyle f(x)\  \qquad\quad \  \mbox{for}\ \ x_d\geq 0, \\
\displaystyle f(x_h,-x_d)\ \ \ \mbox{for}\ \ x_d< 0.
\end{array}\right.
\end{aligned}
\eeq
When solving  \eqref{eq:stokes}, we shall repeatedly consider
the following two equations:
 \beq\label{eq:U} (\p_d+|D_h|)w=|D_h|f\quad\mbox{in}\quad
\R^d_+,\quad\mbox{and}\quad \gamma w=0\quad\hbox{on}\quad\p\R^d_+.
\eeq
 \beq\label{eq:P} (\p_d+|D_h|)v=f\quad\mbox{in}\quad
\R^d_+,\quad\mbox{and}\quad \gamma v=0\quad\hbox{on}\quad\p\R^d_+.
\eeq
Here and in what follows, $\ga$ stands for  the trace operator on $\p\R^d_+.$
\medbreak
 Finally we denote by $H$ the harmonic extension operator from the hyperplane $\p\R^d_+$
 to the half-space $\R^d_+.$ More precisely, for any given $b:\p\R^d_+\to\R,$
we set  $Hb$ to be  the unique
solution going to zero at $\infty$ of \beq\label{2.4} \D Hb=0\quad\mbox{in}\quad
\R^d_+,\quad\mbox{and}\quad \gamma Hb=b\quad\hbox{on}\quad\p\R^d_+.
\eeq
 Introducing the Fourier transform $\cF_h$ with respect to the horizontal component $x_h,$ the function
 $\cF_h(Hb)$ is explicitly given by the formula
 \begin{equation}\label{eq:Hb}
 \cF_h(Hb)(\xi_h,x_d)=e^{-|\xi_h|x_d}\cF_h b(\xi_h),\quad \xi_h\in\R^{d-1},\: x_d>0,
\end{equation}
hence in particular
 \begin{equation}\label{eq:Hbb}
 (\pa_d+|D_h|)Hb=0\quad\hbox{in }\ \R^d_+.
\end{equation}
\begin{lem}\label{l:U}{\sl
For any smooth enough data $f$ decaying at $\infty,$ Equation  \eqref{eq:U} has a unique solution going
to $0$ at $\infty,$ and Equation \eqref{eq:P} has a unique solution with gradient going to $0$ at infinity.
Furthermore, denoting by $U$ and $P$ the solution operators for \eqref{eq:U} and \eqref{eq:P}, one has
\begin{equation}\label{eq:Uf}
Uf=r\,R_h\cdot S\bigl(R_h\cdot S\,e_a(f)+R_de_s(f)\bigr)
\quad\!\hbox{and}\quad\!
Pf(x_h,x_d)=\!\int_0^{x_d}\!(I-U)f(x_h,y_d)\,dy_d
\end{equation}
and the following identities are satisfied:
\begin{enumerate}
\item $\nabla_hU=U\nabla_h$;\vspace{2pt}
\item\label{eq:deux} $\p_dU=(I-U)|D_h|$;\vspace{2pt}
\item $\nabla_hP=SU$;\vspace{2pt}
\item $\p_dP=I-U$;\vspace{2pt}
\item $\Delta P=\p_d-|D_h|$;\vspace{2pt}
\item $[P,\p_d]=-H\gamma.$
\end{enumerate}}
\end{lem}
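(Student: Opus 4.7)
The plan is to pass to the horizontal partial Fourier transform $\cF_h$ in $x_h$, under which equations \eqref{eq:U} and \eqref{eq:P} become, for each frozen $\xi_h$, first-order linear ODEs in $x_d$, namely $(\p_d + |\xi_h|)\widehat w = |\xi_h|\widehat f$ and $(\p_d + |\xi_h|)\widehat v = \widehat f$ with $\widehat w(\xi_h, 0) = \widehat v(\xi_h, 0) = 0$. Existence and uniqueness of the relevant decaying solutions are standard Volterra/ODE theory and produce the representations
$$\widehat{Uf}(\xi_h, x_d) = |\xi_h|\int_0^{x_d} e^{-|\xi_h|(x_d - y)}\,\widehat f(\xi_h, y)\,dy, \qquad \widehat{Pf}(\xi_h, x_d) = \int_0^{x_d} e^{-|\xi_h|(x_d - y)}\,\widehat f(\xi_h, y)\,dy.$$
The second formula in \eqref{eq:Uf} will then follow once identity (4) is established, simply by integrating $\p_d Pf = (I-U)f$ with the vanishing-trace condition $\gamma Pf = 0$.

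The six identities are then consequences of the defining ODEs and uniqueness. Identity (2) is a direct rewriting of $(\p_d + |D_h|)U = |D_h|$. For (4), applying $|D_h|$ to the defining equation of $P$ shows that $|D_h|P$ satisfies the defining equation of $U$ with zero trace, hence $|D_h|P = U$ by uniqueness, and therefore $\p_d Pf = f - |D_h|Pf = (I-U)f$. Identity (3) is then immediate from $S = \nabla_h |D_h|^{-1}$ together with $|D_h|P = U$, while (1) follows from the same commuting argument with $\nabla_h$ in place of $|D_h|$. For (5), combining (2) and (4) with the horizontal-Fourier identity $\D = \p_d^2 - |D_h|^2$ and the commutation $U|D_h| = |D_h|U$ (again by uniqueness) gives $\D P = \p_d(I-U) - |D_h|U = \p_d - (\p_d U + |D_h|U) = \p_d - |D_h|$. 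Finally, for (6), integrating by parts in the Volterra formula for $\widehat{P\p_d f}$ and invoking the explicit representation \eqref{eq:Hb} of the harmonic extension yields $P\p_d f = f - H\gamma f - Uf = \p_d Pf - H\gamma f$, that is, $[P, \p_d] = -H\gamma$.

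The main obstacle is the verification of the first formula in \eqref{eq:Uf}. Set $G := R_h \cdot S\, e_a(f) + R_d\, e_s(f)$ on $\R^d$ and $w := r\,R_h \cdot S\, G$. By uniqueness, it suffices to check that $w$ solves the ODE \eqref{eq:U}. A parity check in the full Fourier variable $\xi_d$, noting that the symbol $-|\xi_h|/|\xi|$ of $R_h \cdot S$ is even in $\xi_d$ whereas the symbol $i\xi_d/|\xi|$ of $R_d$ is odd, shows that $G$ and therefore $R_h \cdot S\, G$ are odd functions of $x_d$, so $\gamma w = 0$. For the PDE, the key algebraic fact is the factorisation $(i\xi_d + |\xi_h|)(-i\xi_d + |\xi_h|) = |\xi|^2$, which yields the operator identity $(\p_d + |D_h|)|D|^{-2} = (|D_h| - \p_d)^{-1}$ on $\R^d$ and hence, after collecting the symbols,
$$(\p_d + |D_h|)(R_h \cdot S\, G) \;=\; |D_h|\,(|D_h| - \p_d)^{-1}\bigl[|D_h|\, e_a(f) - \p_d\, e_s(f)\bigr].$$
Since $e_s(f)$ is continuous across $\p\R^d_+$, the distributional derivative $\p_d e_s(f)$ equals $e_a(\p_d f)$ (no boundary delta), so the bracket reduces to $e_a\bigl((|D_h| - \p_d)f\bigr)$. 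A short computation based on the Volterra representation $(|D_h| - \p_d)^{-1} h(\cdot, x_d) = \int_0^{\infty} e^{-|D_h| z}\, h(\cdot, x_d + z)\,dz$, followed by one integration by parts in $z$, shows that $(|D_h| - \p_d)^{-1} e_a\bigl((|D_h| - \p_d)f\bigr) = f$ on $\R^d_+$. Hence $(\p_d + |D_h|)w = |D_h| f$ on $\R^d_+$, and uniqueness concludes the verification $w = Uf$.
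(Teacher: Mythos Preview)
Your proof is correct and follows essentially the same strategy as the paper: reduce to first-order ODEs in $x_d$ via the horizontal Fourier transform, and obtain the six identities from the defining equations together with uniqueness. The treatment of items (1)--(5) is virtually identical to the paper's; for (6) the paper argues slightly differently, observing that $P\p_df - f + H\gamma f$ satisfies the defining equation of $-Uf$ (with zero trace, using \eqref{eq:Hbb}) and then invoking uniqueness, but your integration-by-parts in the Volterra kernel amounts to the same computation.

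The one point where your route genuinely differs is the first formula in \eqref{eq:Uf}. The paper \emph{derives} it: applying $(\p_d-|D_h|)$ to \eqref{eq:U} produces the Dirichlet--Poisson problem $-\Delta w=(|D_h|-\p_d)|D_h|f$, $w|_{x_d=0}=0$, whose unique decaying solution is $r(-\Delta)^{-1}e_a\bigl((|D_h|-\p_d)|D_h|f\bigr)$; the extension identities $e_a(|D_h|^2f)=|D_h|^2e_a(f)$ and $e_a(\p_d|D_h|f)=\p_d|D_h|e_s(f)$ then unpack this into the Riesz-transform form. You instead \emph{verify} the formula: you take $w=rR_h\cdot S\,G$ as a candidate, check $\gamma w=0$ by parity, and confirm $(\p_d+|D_h|)w=|D_h|f$ via the symbol factorisation $(i\xi_d+|\xi_h|)(|\xi_h|-i\xi_d)=|\xi|^2$ together with the observation that the Volterra representation of $(|D_h|-\p_d)^{-1}$ involves only values at $x_d+z\geq x_d>0$, so the antisymmetric extension is irrelevant on $\R^d_+$. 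Both arguments rest on the same algebraic identity (the factorisation of $-\Delta$), but the paper's forward derivation is marginally cleaner in that it does not require guessing the answer, while your verification makes the role of the extension operators $e_a,e_s$ more transparent.
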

\begin{proof}
If $w$ is a solution to \eqref{eq:U} then it also satisfies the following Poisson equation:
$$
\left\{\begin{array}{l}-\Delta w=(|D_h|-\p_d)|D_h|f\\
w|_{x_d=0}=0,\end{array}\right.
$$
the unique solution (decaying to $0$ at infinity) of which  is given
by
$$
w=r(-\Delta)^{-1}e_a\bigl((|D_h|-\p_d)|D_h|f\bigr).
$$
As  $e_a(|D_h|^2f)=|D_h|^2e_a(f)$ and
$e_a(\p_d|D_h|f)=\p_d(e_s(|D_h|f))=\p_d|D_h|e_s(f),$ we get  the formula for $Uf.$
\smallbreak
It is obvious that $U$ commutes with $\nabla_h.$ As regards commutation with $\p_d,$ we notice that,
by definition of $Uf,$
$$
\p_dUf+|D_h|Uf=|D_h|f,
$$
which yields \eqref{eq:deux}.
\medbreak
It is clear that $|D_h|Pf$ satisfies \eqref{eq:U}, hence $|D_h|Pf=Uf$
and $\nabla_hPf=SUf.$
Similarly, the equation for $Pf$ yields
$$
\p_dPf=f-|D_h|Pf=f-Uf,
$$
hence integrating with respect to the vertical variable gives the expression for $P.$
\medbreak
The next item is a direct consequence of the definition of $P$ (just
apply $\p_d-|D_h|$ to the equation). Finally, we have by definition of
$P\p_df,$
$$
(\p_d+|D_h|)P\p_df=\p_df=(\p_d+|D_h|)f-|D_h|f.
$$
Hence, using \eqref{eq:Hbb},
$$
(\p_d+|D_h|)(P\p_df-f+H\gamma f)=-|D_h|f\quad\hbox{and}\quad
(P\p_df-f+H\gamma f)|_{x_d=0}=0.
$$
Therefore, using the definition of $U,$ one may write
$$
P\p_df-f+H\gamma f=-Uf.
$$
Because $\p_dP=I-U,$ it is easy to complete the proof of the last item.
\end{proof}
\begin{rmk}{\sl For functions vanishing at $x_d=0,$ operator
$U$ coincides with the expression
$$rR_h\cdot S(R_h\cdot S+R_d)e_0$$
that has been introduced in \cite{Ukai} and   plays the role of the
left-inverse  of $(\Id+|D_h|^{-1}\pa_d)$ for functions of $\R^d_+$
vanishing at $\pa\R^d_+.$ Our definition of operator $U$ is slightly
more general as it allows us to consider functions that \emph{do not
vanish at $x_d=0.$}}
\end{rmk}
The main result of this subsection reads:
\begin{thm}\label{th2.1}
{\sl Given smooth and decaying  data $u_0,$ $f$ and $g$ with $g=\dive Q,$  the unique solution  $(u, \na\Pi)$
of \eqref{eq:stokes} is given by
 \beq\label{2.5}
\begin{aligned}
 u^h=&re^{\mu t\D}e_a(V_hu_0)- SPg-SU e^{\mu
 t\D}e_a(V_du_0)\\
& -SU\int_0^te^{\mu(t-\tau)\D}e_a\bigl(\wt{N}f+Gk\bigr)\,d\tau
 +r\int_0^te^{\mu(t-\tau)\D}e_a(\wt{M}f+SGk\bigr)\,d\tau,\\
 u^d=& Pg+U e^{\mu
 t\D}e_a(V_du_0)+U\int_0^te^{\mu(t-\tau)\D}e_a\bigl(\wt{N}f+Gk\bigr)\,d\tau,
 \end{aligned}
 \eeq
 and
 \beq\label{2.6}
 \begin{aligned}
\nabla_h\Pi&=r\bigl(R_h\cdot S(SR_d+R_h)\bigr)\bigl(S\cdot e_a(f^h)+e_s(f^d)\bigr)+S(U-I)Gk\\
 &+\mu(\nabla_h-S\p_d)g+S\p_t\bigl(S\cdot UQ^h+(I-U)Q^d-H\gamma Q^d\bigr) +SU\wt Nf\\
 &+\bigl(r(|D_h|-\p_d)\nabla_h +SU\Delta\bigr)\biggl(e^{\mu t\D}e_a(V_du_0)
 +\int_0^te^{\mu(t-\tau)\D}e_a\bigl(\wt{N}f+Gk\bigr)\,d\tau\biggr),\\[2ex]
  \pa_d\Pi&=f^d +\mu(\p_d-|D_h|)g-\p_t\bigl(S\cdot UQ^h+(I-U)Q^d-H\gamma Q^d\bigr)-U\bigl(\wt Nf+Gk\bigr)\\
  &-\bigl(r(|D_h|-\p_d)|D_h|+U\Delta\bigr)\biggl(e^{\mu t\D}e_a(V_du_0)
   +\int_0^te^{\mu(t-\tau)\D }e_a\bigl(\wt{N}f+Gk\bigr)\,d\tau\biggr),
 \end{aligned}
 \eeq
 where $Gk,$ $\wt{M}f$ and $\wt{N}f$ are given by
 \beq\label{2.7}
 \begin{aligned}
Gk&=-r\bigl(R_d-R_h\cdot S\bigr)\bigl(R_h\cdot e_a(k^h)+R_de_s(k^d)\bigr)\quad\hbox{with}\quad k=\p_tQ-\mu\nabla g,\\[1ex]
 \wt{N}f&=r\bigl\{\bigl[1+R_d^2-R_dR_h\cdot S\bigr]e_s(f^d)+R_d^2S\cdot e_a(f^h)
 +R_dR_h\cdot e_a(f^h)\bigr\},\\[1ex]
\wt{M}f&=rS\bigl[R_d-R_h\cdot S\bigr]\bigl(R_h\cdot
e_a(f^h)+R_d e_s(f^d)\bigr)+V_hf.
\end{aligned}
\eeq}
\end{thm}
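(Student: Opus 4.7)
The plan is to extend the explicit-formula approach of Ukai~\cite{Ukai} so as to accommodate both a nontrivial forcing $f$ and an inhomogeneous divergence constraint $g=\dive Q$. The guiding idea is that applying the symmetrizing operators $V_h$ and $V_d$ of \eqref{2.1} to the velocity equation decouples the horizontal and vertical components and reduces matters, modulo the pressure, to scalar heat equations on $\R^d_+$ with vanishing Dirichlet trace. Those can be solved by antisymmetric extension, and the original triple $(u^h,u^d,\nabla\Pi)$ is then recovered using the auxiliary operators $U$, $P$ and $H$ of Lemma~\ref{l:U}.

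The first task is the \emph{elimination of the pressure}. A direct computation based on the elementary identity $S\cdot\nabla_h=-|D_h|$ yields
$$V_d(\nabla\Pi)=(\pa_d+|D_h|)\Pi\andf V_h(\nabla\Pi)=\nabla_h\Pi+S\pa_d\Pi,$$
while taking $\dive$ in the velocity equation and using $g=\dive Q$ gives
$$\D\Pi=\dive(f-k)\with k\eqdefa\pa_tQ-\mu\nabla g.$$
Factoring $\D=(\pa_d-|D_h|)(\pa_d+|D_h|)$ on the extended problem allows one to express $(\pa_d+|D_h|)\Pi$ purely in terms of $f$ and $k$, with the extensions $e_a,e_s$ dictated by the Dirichlet condition on $u$; the resulting expressions are precisely $\wt Nf$ and $Gk$ of \eqref{2.7}. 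Substituting this back into the $V_d$-projection of the velocity equation produces the heat problem
$$(\pa_t-\mu\D)V_du=V_df-\wt Nf-Gk\quad\hbox{in }\R^d_+,\qquad V_du|_{x_d=0}=0,$$
the decaying solution of which is obtained by antisymmetric extension. A parallel argument, with the horizontal pressure contribution packaged into the operator $\wt Mf$, yields the analogous closed expression for $V_hu$.

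The reconstruction then follows Ukai's pattern. Combining $\dive u=g$ with the definition of $V_du$ rewrites the divergence constraint as $(\pa_d+|D_h|)u^d=|D_h|V_du+g$ together with $u^d|_{x_d=0}=0$, so that Lemma~\ref{l:U} gives $u^d=UV_du+Pg$. Inserting the above expression for $V_du$ produces the formula \eqref{2.5} for $u^d$, and $u^h=V_hu-Su^d$ yields that of $u^h$. Finally, $\nabla\Pi=f-\pa_tu+\mu\D u$ evaluated on this explicit $u$ gives \eqref{2.6}: items~(1)--(5) of Lemma~\ref{l:U} govern how $U$ and $P$ interact with $\nabla_h$, $\pa_d$ and $\D$, while item~(6) $[P,\pa_d]=-H\gamma$ is responsible for the harmonic-extension correction $-H\gamma Q^d$.

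The main obstacle is bookkeeping. Because $Q$, $g$ and $f$ need not vanish on $\pa\R^d_+$, every commutation of $V_h$ or $V_d$ through $\pa_d$ or $|D_h|$ produces boundary contributions, either through the extensions $e_a,e_s$ or through item~(6) of Lemma~\ref{l:U}. Checking that these boundary terms recombine exactly into the compact forms \eqref{2.7} requires systematic use of the Riesz-type relations $S\cdot S=-I$, $R_h\cdot R_h+R_d^2=-I$ and the intertwinings between $R_h$, $R_d$ and $S$ on the even/odd extensions. Once these identities are in place, the formulas \eqref{2.5}--\eqref{2.6} follow, and uniqueness is automatic since $(u,\nabla\Pi)$ is then exhibited as a bounded linear function of $(u_0,f,g)$.
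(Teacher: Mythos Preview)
Your outline is essentially the paper's argument: compute $(\pa_d+|D_h|)\Pi$ from the Poisson equation for $\Pi$, reduce to Dirichlet heat problems for $V_hu$ and for the vertical part, solve by antisymmetric extension, and reconstruct $(u,\nabla\Pi)$ through the operators $U,P,H$ of Lemma~\ref{l:U}. The only cosmetic difference is that the paper works with $z\eqdefa(\pa_d+|D_h|)u^d$ instead of $V_du$ directly; since $z-g=|D_h|V_du$ the two routes are equivalent.

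One slip to correct. The identity $(\pa_d+|D_h|)\Pi=Me_a\bigl(\dive(f-k)\bigr)$ together with $Me_a(\dive k)=Gk$ and the algebraic relation $V_df-Me_a(\dive f)=\wt Nf$ give
$(\pa_d+|D_h|)\Pi=V_df-\wt Nf-Gk$,
so the heat equation you obtain is
\[
(\pa_t-\mu\D)V_du=V_df-(\pa_d+|D_h|)\Pi=\wt Nf+Gk,
\]
not $V_df-\wt Nf-Gk$ (that expression is $(\pa_d+|D_h|)\Pi$ itself). With this correction your reconstruction $u^d=U(V_du)+Pg$ reproduces \eqref{2.5} exactly, and the rest of your sketch goes through as written.
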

\begin{proof} We shall essentially follow the arguments in \cite{CPS,Ukai}.
  Note that  setting
\begin{equation}\label{eq:change}
\begin{array}{ll} u_{new}(t,x)=\mu u_{old}(\mu^{-1}t,x), &\ \Pi_{new}(t,x)=
\Pi_{old}(\mu^{-1}t,x),\\[1ex]  f_{new}(t,x)=
f_{old}(\mu^{-1}t,x),&\  g_{new}(t,x)=\mu g_{old}(\mu^{-1}t,x)\end{array}
\end{equation}
 reduces the study to the
case $\mu=1,$ an assumption that we are going to make in the rest of
the proof. \medbreak
The basic  idea is to reduce the study to that of the heat equation for the auxiliary functions
$V_hu$ and $V_du.$
 As a first step, let us compute $\Pi$ in terms
of $\dive f, g$ and of its trace at $\partial\R^d_+.$ Taking space
divergence to \eqref{eq:stokes} yields  \beno \left\{\begin{array}{l}
\displaystyle -\D \Pi=\p_tg-\D g-\dive f\quad\mbox{in}\quad \R^d_+, \\[1ex]
\displaystyle \gamma\Pi=b\quad\mbox{on}\quad\p\R^d_+\end{array}\right.\qquad \Pi\to
0\quad \mbox{as}\quad |x|\to \infty,
\eeno the solution of which is given by
$$
\Pi=r(-\D)^{-1}e_a(\dive(k-f))+Hb\quad\hbox{with }\ k=\p_tQ-\na g,
$$ which along with
\eqref{eq:Hbb} implies that
 \beq \label{2.8}
 \begin{array}{lll}
(\p_d+|D_h|)\Pi&=&-r(\p_d+|D_h|)(-\D)^{-1}e_a(\dive(f-k))\\[1ex]
&=& Me_a(\dive(f-k)) \end{array} \eeq
with
\begin{equation}\label{eq:M}
 Mh\eqdefa-r(\p_d+|D_h|)(-\D)^{-1}h.
\end{equation}
Let $z\eqdefa(\p_d+|D_h|)u^d$ and $Nf\eqdefa (\p_d+|D_h|)f^d-\p_dMe_a(\dive f).$
We infer from \eqref{eq:stokes} and \eqref{2.8} that
 \beno
\left\{\begin{array}{l}
\displaystyle \p_tz-\D z=Nf+\p_dM e_a(\dive k)\quad\mbox{in}\quad \R_+\times\R^d_+, \\
\displaystyle (z-g)|_{x_d=0}=0,\\
\displaystyle z|_{t=0}=(\p_d+|D_h|)u_0^d.
\end{array}\right.
\eeno Note from  the definition of $M$ in \eqref{eq:M} that
\begin{equation}\label{eq:Mea}
\p_dMe_a(h)=re_a(h)-r|D_h|(\p_d\!+\!|D_h|)(-\D)^{-1}\bigl[e_a(h)\bigr].
\end{equation}
Therefore, because
\begin{equation}\label{eq:div}
e_a(\dive k)=\dive_he_a(k^h)+\p_de_s(k^d)
\end{equation}
and
\begin{equation}\label{eq:Gk}
Me_a(\dive k)=Gk,
\end{equation}
 we get
  \beno
\begin{aligned}
Nf&=|D_h|f^d-r\dive_he_a(f^h)+r|D_h|(\p_d+|D_h|)(-\D)^{-1}\bigl[\dive_he_a(f^h)+\p_de_s(f^d)\bigr]\\
&=|D_h|\wt{N}f.\end{aligned}
\eeno Now, using \eqref{eq:Mea} with $h=\p_tg-\Delta g=\dive k$ and \eqref{eq:div},  we thus obtain
\beno \left\{\begin{array}{ll}
\displaystyle \p_t(z-g)-\D (z-g)=|D_h|\bigl(\wt{N}f+Gk\bigr)&\quad\mbox{in}\quad \R_+\times\R^d_+, \\
\displaystyle (z-g)|_{x_d=0}=0&\quad\mbox{in}\quad \R_+\times\p\R^d_+,\\
\displaystyle (z-g)|_{t=0}=|D_h|V_du_0&\quad\mbox{in}\quad \R^d_+.
\end{array}\right.
\eeno Taking advantage of the solution formula for the heat equation
in $\R^d_+$ with homogeneous Dirichlet boundary conditions, we
deduce that
$$\begin{aligned}
(z-g)(t)=r|D_h|\biggl(&e^{t\D}e_a(V_du_0)+\int_0^te^{(t-\tau)\D}e_a\bigl(\wt
Nf+Gk\bigr)\,d\tau\biggr).
\end{aligned}$$
As $z-g=(|D_h|+\p_d)(u^d-Pg)$ and $u^d-Pg$ vanishes at $x_d=0,$
 keeping in mind the definition of  $U,$  we get  the second equality of \eqref{2.5}.
\medbreak
 To derive the solution formula for $u^h$, we look at the equation satisfied by  $V_hu.$ Thanks to \eqref{eq:stokes}, \eqref{2.7} and \eqref{2.8}, we get, observing
 $$ V_hf-S(|D_h|+\p_d)\Pi=V_hf-SMe_a(\dive(f-k)) = \wt{M}f+SGk,$$ that
  \beno
\left\{\begin{array}{ll}
\displaystyle \p_tV_hu-\D V_hu=\wt{M}f+SGk&\quad\mbox{in}\quad \R_+\times\R^d_+,  \\
\displaystyle V_hu|_{x_d=0}=0&\quad\mbox{in}\quad \R_+\times\R^d_+, \\
\displaystyle V_hu|_{t=0}=V_hu_0&\quad\mbox{in}\quad \R^d_+,
\end{array}\right.
\eeno so that $$
V_hu(t)=re^{t\D}e_a(V_hu_0)+r\int_0^te^{(t-\tau)\D}e_a\bigl(\wt{M}f+SGk\bigr)\,d\tau.$$
As $u^h=V_hu-Su^d,$ combining the above identity and the second
formula of \eqref{2.5} yields
\beq\label{vhu0}
u^h(t)=re^{t\Delta}e_a(V_hu_0)+r\int_0^te^{(t-\tau)\Delta}e_a(\wt Mf+SGk)\,d\tau-Su^d
\eeq
whence  the solution formula for $u^h.$
\medbreak Let us finally derive \eqref{2.6}.  By virtue of \eqref{eq:stokes} and \eqref{2.5}, we may write
 \beno \begin{aligned} \p_d\Pi&=f^d-(\p_t-\D) u^d\\
&=f^d-(\p_t-\D)Pg-(\p_t-\D)U\biggl(e^{t\D}e_a(V_du_0)
+\int_0^te^{(t-\tau)\D}e_a\bigl(\wt{N}f+Gk\bigr)\,d\tau\biggr).
\end{aligned}
\eeno However, because
$$
\Delta Uh=r(\p_d-|D_h|)|D_h|h,
$$
one may write
 \beno
(\p_t-\D)Uh= U(\p_t-\D)h+r(|D_h|-\p_d)|D_h|h+U\Delta h.\eeno
Note also that, by virtue of Lemma \ref{l:U},
$$
\p_tPg=\p_t(P\dive_hQ^h+P\p_dQ^d)=\p_t(S\cdot UQ^h+(I-U)Q^d-H\gamma Q^d)
$$
and that
$$
\Delta Pg=(\p_d-|D_h|)g,
$$
 which ensures that
  \begin{multline}\label{2.10}
\p_d\Pi=f^d-\p_t(S\cdot UQ^h+(I-U)Q^d-H\gamma Q^d)+(\p_d-|D_h|)g-U\bigl(\wt Nf+Gk\bigr)
\\-\bigl(r(|D_h|-\p_d)|D_h|+U\Delta\bigr)\biggl(e^{t\D}e_a(V_du_0)
+\int_0^te^{(t-\tau)\D}e_a(\wt{N}f+Gk)\bigr)\,d\tau\biggr)\cdotp
\end{multline}
On the other hand, by virtue of \eqref{2.8}, one has
 \beno
|D_h|\Pi=- \p_d\Pi+ Me_a(\dive(f-k)), \eeno and  \beno
\begin{aligned}
Me_a(\dive f)&=-(\p_d+|D_h|)(-\D)^{-1}\bigl(\dive_he_a(f^h)+\p_de_s(f^d)\bigr)\\
&=e_s(f^d)-(\p_d+|D_h|)(-\D)^{-1}\bigl(\dive_he_a(f^h)+|D_h|e_s(f^d)\bigr),
\end{aligned}
\eeno which together with \eqref{2.10} gives rise to \eqref{2.6}.
This completes the proof of Theorem \ref{th2.1}.
\end{proof}
The following remark will be the key to the proof of the anisotropic smallness condition in Theorem \ref{th:main}
as it gives an expression of the horizontal components of the free solution to the Stokes system \emph{independent of the vertical component}.
\begin{rmk} In the case $f\equiv0$ and $g\equiv0,$
if we further  assume that $\dive u_0=0$ and $\ga u_0^d=0$ then the horizontal components of
$u$ also obey the following formula:
\begin{equation}\label{add.2}
u^h(t)=\bigl(r+SUS\cdot\bigr)e^{\mu t\D}e_a(u_0^h)+rS\bigl(R_dR_he_s+R_h\cdot
SR_he_a\bigr)\cdot e^{\mu t\D}e_s(u_0^h).\end{equation}
\end{rmk}
\begin{proof}
Given our assumptions, formula \eqref{2.5} reduces to
$$
u^h(t)=re^{\mu t\Delta}e_a(V_hu_0)-SUe^{\mu t\Delta}e_a(V_du_0).
$$
Hence, by virtue of \eqref{2.1},
\beq\label{add.19} u^h(t)=\bigl(r+SUS\cdot\bigr)e^{\mu t\D}e_a(u_0^h)+(r-U)Se^{\mu
t\D}e_a(u_0^d).
\eeq Now, as $rSe^{\mu t\D}e_a(u_0^d)=rSe_a\bigl(e^{\mu t\D}e_a(u_0^d)\bigr),$ we deduce from \eqref{eq:Uf} that
$$
\begin{array}{lll}
(r-U)Se^{\mu t\D}e_a(u_0^d)&\!\!\!=\!\!\!&rS\bigl(1-(R_h\cdot
S)^2\bigr)e_a\bigl(e^{\mu t\D}e_a(u_0^d)\bigr)-rSR_h\cdot
SR_de_s\bigl(e^{\mu
t\D}e_a(u_0^d)\bigr)\\[1ex]
&\!\!\!=\!\!\!&-rS|D|^{-2}\p_d^2e_a\bigl(e^{\mu
t\D}e_a(u_0^d)\bigr)-rSR_h\cdot S|D|^{-1}\p_de_s\bigl(e^{\mu
t\D}e_a(u_0^d)\bigr).
\end{array}
$$ Note that $\ga\bigl(e^{\mu t\D}e_a(u_0^d)\bigr)=0$ implies
$\p_de_a\bigl(e^{\mu t\D}e_a(u_0^d)\bigr)=e_s\bigl(e^{\mu
t\D}\p_de_a(u_0^d)\bigr),$ so that there holds
$$-rS|D|^{-2}\p_d^2e_a\bigl(e^{\mu t\D}e_a(u_0^d)\bigr)=-rS|D|^{-2}\p_de_s\bigl(e^{\mu
t\D}\p_de_a(u_0^d)\bigr).$$

As $\ga u_0^d=0$ implies
$\p_de_a(u_0^d)=e_s(\p_du_0^d),$ and because  $\dive u_0=0,$ we thus get
 \beno -rS|D|^{-2}\p_d^2e_a\bigl(e^{\mu t\D}e_a(u_0^d)\bigr)
=rS|D|^{-2}\p_de_s\bigl(e^{\mu t\D}e_s(\dive_hu_0^h)\bigr) \eeno
and \beno rSR_h\cdot S|D|^{-1}\p_de_s\bigl(e^{\mu
t\D}e_a(u_0^d)\bigr)=-rSR_h\cdot S|D|^{-1}e_a\bigl(e^{\mu
t\D}e_s(\dive_hu_0^h)\bigr). \eeno Therefore, we arrive at \beno
(r-U)Se^{\mu t\D}e_a(u_0^d)=rS\bigl(R_dR_he_s+R_h\cdot
SR_he_a\bigr)\cdot e^{\mu t\D}e_s(u_0^h). \eeno
Inserting this latter equality in
 \eqref{add.19} leads to \eqref{add.2}.
\end{proof}


\subsection{A priori estimates}

Let us first briefly recall the definition of homogeneous Besov spaces in $\R^d.$
 Let  $\chi:\R^d\rightarrow[0,1]$ be  a smooth nonincreasing radial function
supported in $B(0,1)$ and such that $\chi\equiv1$ on $B(0,1/2),$ and let
$$\varphi(\xi)\eqdefa\chi(\xi/2)-\chi(\xi).$$
The homogeneous Littlewood-Paley
decomposition of any tempered distribution $u$ on $\R^{d}$
is defined by
$$\dot\Delta_ku\eqdefa\varphi(2^{-k}D)u=
{\mathcal F}^{-1}\bigl(\varphi(2^{-k}\cdot){\mathcal F}u\bigr),\qquad k\in\Z
$$
where  $\cF$ stands for the Fourier transform on $\R^{d}.$
\begin{defi}\label{def:Besov} {\sl For any $s\in\R$ and $(p,r)\in[1,+\infty]^2,$
the homogeneous Besov space $\dot B^s_{p,r}(\R^d)$ stands
for the set of tempered distributions $f$ so that
$$
\|f\|_{\dot B^s_{p,r}(\R^{d})}\eqdefa \bigl\|2^{sk}\|\dot\Delta_kf\|_{L^p(\R^{d})}\bigr\|_{\ell^r(\Z)}<\infty
$$ and for all smooth compactly supported function $\theta$
  over $\R^{d},$ we have
  \begin{equation}\label{eq:lf}
  \lim_{\lambda\rightarrow+\infty}\theta(\lambda D)f=0\quad\hbox{in}\quad
  L^\infty(\R^{d}).
  \end{equation}}
\end{defi}
\begin{rmk}{\sl  Condition \eqref{eq:lf} means that functions in homogeneous Besov spaces are required to have some decay at infinity
(see \cite{BCD} for more details).
In particular, we have
\begin{equation}\label{eq:LP}
f=\sum_{k\in\Z} \dot \Delta_k f\quad\hbox{in }\ \cS'(\R^d)
\end{equation}
  whenever $f$ satisfies \eqref{eq:lf}.
In this paper, we will only consider exponents $s<d/p$ so that for $f$ with
finite $\dot B^s_{p,r}(\R^d)$ semi-norm, \eqref{eq:lf} and \eqref{eq:LP} are equivalent.}
\end{rmk}
The homogeneous Besov spaces on the half-space are defined by restriction:
\begin{defi} {\sl For any $s\in\R,$ and $(p,r)\in[1,+\infty]^2$ we denote by
$\dot B^s_{p,r}(\R^d_+)$ the set of distributions $u$ on $\R^d_+$  admitting
some extension $\wt u\in \dot B^s_{p,r}(\R^d)$ on $\R^d.$
Then we set
$$\|u\|_{\dot B^s_{p,r}(\R^d_+)}\eqdefa \inf \|\wt u\|_{\dot B^s_{p,r}(\R^d)}$$
where the infimum is taken on all the extensions of $u$ in $\dot B^s_{p,r}(\R^d).$}
\end{defi}
We also need to introduce some spaces of divergence free
vector fields  vanishing at the boundary $\p\R^d_+.$
We proceed as follows:
\begin{defi}\label{d:nulltrace} {\sl For $1<p<\infty$ and $0<s<2,$ we denote by $\dot \cB^s_{p,r}(\R^d_+)$
the completion of the set of divergence free vector fields with
coefficients in $W^{2,p}(\R^d_+)\cap W^{1,p}_0(\R^d_+)$ where
$W^{1,p}_0(\R^d_+)$ stands for the subspace of $W^{1,p}(\R^d_+)$
functions with null trace at $\p\R^d_+$ for the norm
$\|\cdot\|_{\dot B^s_{p,r}(\R^d_+)}.$}
\end{defi}
It is classical (see  e.g. \cite{DM}) that spaces $(\dot
B^s_{p,r}(\R^d_+))^d$ (with the divergence free condition) and
$\dot\cB^s_{p,r}(\R^d_+)$ coincide whenever $1<p,r<\infty$ and
$0<s<1/p.$
\medbreak
The following result extends Lemma 3.2 of \cite{Ukai} to the context of Besov spaces.
\begin{lem}\label{l:extension}{\sl
 Operators $R_h,$ $R_d$ and $S$  map $L^p(\R^d)$ in itself for any $1<p<\infty,$
 and, with no restriction on $s,p,r,$ we have
 $$
 \|\cR z\|_{\dot B^s_{p,r}(\R^d)}\leq C \|z\|_{\dot B^s_{p,r}(\R^d)}\quad\hbox{for }\
 \cR\in \{R_h,R_d,S\}.
 $$
  Operators  $V_h,$ $V_d,$  $U,$ $G,$ $\wt M$ and $\wt N$ map $L^p_+$ in itself if $1<p<\infty,$
 and  $\dot B^s_{p,r}(\R^d_+)$ in itself  if  $1<p,r<\infty$ and $0<s<2.$}
\end{lem}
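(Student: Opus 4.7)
The plan is to treat three groups of operators in succession: the full-space Riesz multipliers $R_h,R_d,S$; the half-space operators $V_h,V_d$ built purely from $S$; and the mixed operators $U,G,\widetilde M,\widetilde N$ which invoke the extensions $e_a,e_s$. For $\mathcal R\in\{R_h,R_d,S\}$, $L^p(\R^d)$ boundedness with $1<p<\infty$ is classical Calder\'on-Zygmund theory. The Besov bound rests on the fact that $\mathcal R$ is a Fourier multiplier, hence commutes with every dyadic block $\dot\Delta_k$; applying the $L^p$ bound block by block and summing in $\ell^r(\Z)$ gives the estimate, while condition \eqref{eq:lf} is preserved since the symbols of $R_h,R_d,S$ are homogeneous of degree zero and smooth away from the origin. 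For $V_h$ and $V_d$, $L^p_+$ boundedness follows by Fubini-slicing in $x_d$ together with horizontal Riesz estimates, and on $\dot B^s_{p,r}(\R^d_+)$ one picks any extension of $u$ to $\dot B^s_{p,r}(\R^d)$, applies $S$ on $\R^d$, and restricts (the result being independent of the extension because $S$ acts only in the horizontal variables).

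The main difficulty is to prove $U:\dot B^s_{p,r}(\R^d_+)\to \dot B^s_{p,r}(\R^d_+)$ for the whole range $0<s<2$, since neither $e_a$ nor $e_s$ preserves Besov regularity beyond $s=1/p$, so a direct reading of the formula for $U$ is doomed. I would circumvent this by establishing two endpoint bounds and real-interpolating. The first endpoint $U:L^p_+\to L^p_+$ is immediate from $Uf=rR_h\cdot S(R_h\cdot S\,e_af+R_de_sf)$: the extensions $e_a,e_s$ are bounded $L^p_+\to L^p(\R^d)$, the Riesz-type operators are bounded on $L^p(\R^d)$ by the previous step, and the restriction $r$ loses nothing. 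For the second endpoint $U:\dot W^{2,p}(\R^d_+)\to\dot W^{2,p}(\R^d_+)$, I would exploit the commutation identities of Lemma \ref{l:U}: starting from $\nabla_h U=U\nabla_h$ and $\partial_d U=(I-U)|D_h|$, and using $|D_h|=-S\cdot\nabla_h$ (a direct consequence of $S_j=\partial_j|D_h|^{-1}$), every second-order derivative of $Uf$ can be rewritten as $I,U,S$ applied to second-order derivatives of $f$, all bounded in $L^p_+$ thanks to the already-obtained $L^p_+$ bounds on $U$ and $S$. Real interpolation between these two endpoints then delivers the Besov bound for $0<s<2$ and $1<p,r<\infty$.

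The operators $G,\widetilde M,\widetilde N$ are treated by exactly the same three-step scheme, being polynomial combinations of $r,R_h,R_d,S,V_h,e_a,e_s$: the $L^p_+$ bound is immediate, the $\dot W^{2,p}_+$ bound follows from the analogous commutation calculus (now also invoking the already-established boundedness of $V_h,V_d$), and real interpolation yields the Besov conclusion. The most delicate single computation is that of $\partial_d^2 Uf$: applying $\partial_d$ to $\partial_d Uf=(I-U)|D_h|f$ produces the nonlocal quantity $|D_h|^2 f$, which must be rewritten as $-\Delta_h f$ via Plancherel in order to recognise a genuine second derivative of $f$ and thereby control $\partial_d^2 Uf$ in $L^p_+$ by $\|\nabla^2 f\|_{L^p_+}$.
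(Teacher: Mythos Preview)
Your proposal is correct and in fact considerably more careful than the paper's own argument. The paper dispatches the lemma in two sentences: Calder\'on--Zygmund for the $L^p$ bounds, and ``Riesz operators are Fourier multipliers of degree $0$, hence map any homogeneous Besov space in itself'' for the Besov bounds. This is unproblematic for $R_h,R_d,S$ acting on $\dot B^s_{p,r}(\R^d)$, but for the half-space operators $U,G,\widetilde M,\widetilde N$ acting on $\dot B^s_{p,r}(\R^d_+)$ with $s>1/p$ the paper does not address the point you correctly isolate: the extensions $e_a,e_s$ appearing in the explicit formulae do not, for a generic input, land in $\dot B^s_{p,r}(\R^d)$.

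Your route---endpoint bounds $L^p_+\to L^p_+$ and $\dot W^{2,p}(\R^d_+)\to\dot W^{2,p}(\R^d_+)$ via the commutation identities of Lemma~\ref{l:U}, then real interpolation---is a genuinely different and more robust argument. It makes transparent why the range $0<s<2$ arises (it is exactly the interpolation interval), and it sidesteps the extension issue entirely by working only with derivatives of $Uf$ expressed through $I,U,S$ acting on derivatives of $f$. Your computation of $\partial_d^2 Uf$ is correct: one finds $\partial_d^2 Uf = |D_h|\partial_d f + (I-U)\Delta_h f$, and both terms are controlled in $L^p_+$ by $\|\nabla^2 f\|_{L^p_+}$.

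Two small points to tighten. First, the interpolation identity $(L^p(\R^d_+),\dot W^{2,p}(\R^d_+))_{\theta,r}=\dot B^{2\theta}_{p,r}(\R^d_+)$ in the \emph{homogeneous} setting on the half-space is not entirely free; you should invoke a simultaneous extension operator (e.g.\ higher-order reflection) bounded on both endpoint spaces, reducing to the known $\R^d$ result. Second, for $G,\widetilde M,\widetilde N$ you should check that the required second-order calculus really closes: unlike $U$, these operators mix $e_a$ and $e_s$ with $R_d$, and the vertical-derivative identities need a line of verification each (they do work, since $\partial_d e_a=e_s\partial_d$ and $\partial_d e_s=e_a\partial_d$ on smooth functions, and all Riesz pieces commute with $\partial_d$).
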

\begin{proof}
The result in the Lebesgue spaces just follows  from the fact that all those operators are combinations of  Riesz transforms
so that  Calderon-Zygmund theorem applies.
The result in homogeneous Besov spaces stems from the fact that the
Riesz operators are Fourier multipliers of degree $0,$ hence map any homogeneous Besov space in itself.
\end{proof}

We are now ready to establish a first family of a priori estimates for System \eqref{eq:stokes}.
\begin{prop}\label{p:stokes}  {\sl Let $1<p,r<\infty$ and  the data $u_0,$ $f,$  $g$
fulfill the following hypotheses:
\begin{itemize}
\item  $u_0\in\dot \cB^{2-\f2r}_{p,r}(\R^d_+)$;\vspace{2pt}
\item $f\in L^r(\R_+;L^p_+)$;\vspace{2pt}
\item $g$ is locally integrable, $\nabla g\in L^r(\R_+;L^p_+),$  $g =\dive Q$ with $\p_tQ\in L^r(\R_+;L^p_+)$
and the following compatibility conditions are fulfilled\footnote{That  $u_0^d$
has a trace is ensured by  $u_0\in\dot \cB^{2-\f2r}_{p,r}(\R^d_+).$
 As for $Q,$ it stems from the fact that $\dive Q$ is quite smooth.}
\begin{equation}\label{eq:comp} \gamma u_0^d=0,\quad
g|_{t=0}=0, \quad\hbox{and}\quad \p_t(\gamma Q^d)=0.
\end{equation}
\end{itemize}
 Then System \eqref{eq:stokes}  has a unique solution $(u,\nabla\Pi)$
with
$$
u\in\cC_b(\R_+;\dot B^{2-\f2r}_{p,r}(\R^d_+))\quad{and}\quad
\pa_tu, \nabla^2u,\nabla\Pi\in L^r(\R_+;L^p_+),
$$
with also $\nabla u\in L^q(\R_+;L^m_+)$ whenever
 $q\in [r,\infty)$ and $m\in[p,\infty]$  satisfy
$$0\leq1-\f2r+\f2q\leq\f dp\quad\hbox{and}\quad \f dm=\f dp-1+\f2r-\f2q\cdotp$$
Furthermore,  the following inequality is fulfilled for all $T>0$:
\begin{multline}\label{eq:ineq1}
\mu^{1-\f1r}\|u\|_{L_T^\infty(\dot B^{2-\f2r}_{p,r}(\R^d_+))}
+\mu^{1-\f1r+\f1q}\|\nabla u\|_{L_T^q(L^m_+)}+\|(\pa_tu,\mu\nabla^2u,\nabla\Pi)\|_{L^r_T(L^p_+)}\\
\leq C\Bigl(\mu^{1-\f1r}\|u_0\|_{\dot B^{2-\f2r}_{p,r}(\R^d_+)}+\|(f,\mu\nabla g,\p_tQ)\|_{L^r_T(L^p_+)}\Bigr)\cdotp
\end{multline}
Finally, if $g\equiv0$ then we have  $u\in\cC_b(\R_+;\dot
\cB^{2-\f2r}_{p,r}(\R^d_+)).$ }\end{prop}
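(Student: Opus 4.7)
The plan is to extract all the needed bounds from the explicit solution formulas \eqref{2.5}--\eqref{2.6} of Theorem \ref{th2.1}. Uniqueness will follow immediately from the linearity of the system: once the a priori estimate \eqref{eq:ineq1} has been established, applying it to the difference of two solutions (which has zero data) forces vanishing. For existence and the estimate itself, I would first reduce to $\mu=1$ via the scaling \eqref{eq:change}, which preserves the hypotheses and the form of \eqref{eq:ineq1}.

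Inspecting \eqref{2.5}, every piece of $u$ has either the form $\cR_1\,e^{t\Delta}e_\#(\cR_2 u_0)$ or that of a Duhamel integral $\int_0^t\cR_1\,e^{(t-\tau)\Delta}e_\#(\cR_2 F)\,d\tau$, where $\cR_1,\cR_2$ belong to the algebra generated by $R_h,R_d,S,U,V_h,V_d,P$ and $F$ is built from $f$ and $k=\p_tQ-\mu\nabla g$ via $\wt Mf,\wt Nf,Gk$. By Lemma \ref{l:extension}, all these operators are bounded on $L^p(\R^d_+)$ and on $\dot B^{2-2/r}_{p,r}(\R^d_+)$, while the extensions $e_a,e_s$ preserve the corresponding whole-space norms. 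Each contribution is therefore controlled by the norm on $\R^d$ of a heat-equation building block, which I would handle by invoking the whole-space Besov maximal regularity (Corollary \ref{c:besov} together with the appendix) for $\p_t-\Delta$. Summing yields the first two terms on the left of \eqref{eq:ineq1} bounded by the right-hand side; here the $\|\nabla g\|_{L^r(L^p)}$ contribution arises from the piece $SPg$ (via the identities $\nabla_hP=SU$ and $\p_dP=I-U$ from Lemma \ref{l:U}) and from $k$. The pressure estimate then follows by the same strategy applied to \eqref{2.6}: the pure singular-integral pieces in $f$ are $L^p$-bounded by Calderon-Zygmund; the term $\mu(\nabla_h-S\p_d)g$ contributes $\|\nabla g\|_{L^r(L^p)}$; the time-derivative term $S\p_t\bigl(S\cdot UQ^h+(I-U)Q^d-H\gamma Q^d\bigr)$ contributes $\|\p_tQ\|_{L^r(L^p)}$ (using $\p_t(\gamma Q^d)=0$ from \eqref{eq:comp}, so that the $H\gamma Q^d$ piece drops out after $\p_t$); the remaining heat-evolution pieces are handled exactly as for $u$.

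The refined bound $\nabla u\in L^q(L^m_+)$ will come from time interpolation: combining $\nabla u\in L^\infty(\dot B^{1-2/r}_{p,r})$ (inherited from $u\in L^\infty(\dot B^{2-2/r}_{p,r})$) with $\nabla u\in L^r(\dot B^1_{p,r})$ (from the $L^r(L^p)$ control of $\nabla^2u$ together with the heat-equation structure) gives $\nabla u\in L^q(\dot B^{1-2/r+2/q}_{p,r})$, and the Sobolev embedding $\dot B^s_{p,r}\hookrightarrow L^m$ with $s=d/p-d/m$ matches exactly the stated constraints on $q,m,p,r$. Time continuity is inherited from the strong continuity of $e^{t\Delta}$ on Besov spaces together with standard Bochner arguments for the Duhamel integrals; when $g\equiv 0$, both $\dive u=0$ and $\gamma u=0$ persist at every time, yielding the membership in $\dot\cB^{2-2/r}_{p,r}(\R^d_+)$. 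The main difficulty will be the careful bookkeeping in the pressure formula \eqref{2.6}: the harmonic-extension boundary terms and the evaluation-at-$t=0$ issues have to be handled using precisely the compatibility conditions \eqref{eq:comp} (namely $\gamma u_0^d=0$, $g|_{t=0}=0$, and $\p_t(\gamma Q^d)=0$), without which those pieces would not belong to the claimed $L^r(L^p_+)$ spaces.
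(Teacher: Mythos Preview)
Your overall strategy coincides with the paper's: reduce to $\mu=1$, exploit linearity, feed the explicit formulas \eqref{2.5}--\eqref{2.6} through Lemma~\ref{l:extension} and whole-space maximal regularity (Corollary~\ref{c:besov} and Lemma~\ref{lem3.1}), and use the compatibility conditions \eqref{eq:comp} to kill the $H\gamma Q^d$ term. The paper organizes this by treating the three data $(u_0,f,g)$ separately and by writing out $\nabla_h^2 u^d$, $\nabla_h\partial_d u^d$, $\partial_d^2 u^d$ explicitly via the identities of Lemma~\ref{l:U} (so that each piece becomes a bounded $0$-th order operator acting on a second derivative of a heat evolution), but your compressed description amounts to the same thing.

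The one place where your route diverges, and where it does not work as written, is the $\nabla u\in L^q(L^m_+)$ step. You propose to interpolate between $\nabla u\in L^\infty_t(\dot B^{1-2/r}_{p,r})$ and $\nabla u\in L^r_t(\dot B^{1}_{p,r})$, then apply the critical embedding $\dot B^{d/p-d/m}_{p,r}\hookrightarrow L^m$. Two problems: first, $\nabla^2u\in L^r_t(L^p)$ only gives $\nabla u\in L^r_t(\dot B^1_{p,\max(p,2)})$, not $\dot B^1_{p,r}$, so for $r<\max(p,2)$ the second endpoint is missing; second, the critical embedding $\dot B^{d/p-d/m}_{p,r}\hookrightarrow L^m$ fails whenever $r>\min(m,2)$, so for $r>2$ you cannot pass to $L^m$ at the end. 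The paper bypasses both issues by using the \emph{mixed} Gagliardo--Nirenberg inequality
\[
\|\nabla z\|_{L^m}\le C\,\|z\|_{\dot B^{2-2/r}_{p,r}}^{1-\theta}\,\|\nabla^2 z\|_{L^p}^{\theta}
\]
(one endpoint Besov, the other Lebesgue), applied pointwise in $t$ to $z=e_a u$, followed by H\"older in time with $q=r/\theta$. This is the same interpolation idea you have in mind, but with $\|\nabla^2 u(t)\|_{L^p}$ in place of a $\dot B^1_{p,r}$ norm, which is exactly what the maximal-regularity estimate supplies.
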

\begin{rmk}{\sl
As regards the bounds for $\nabla u,$ we shall  often  use the following two cases:
\begin{itemize}
\item $p=\f{dr}{3r-2},$ $q=2r$ and $m=\f{dr}{2r-1},$
\item $p>d,$ $q=\f{2p}{d+(\f2r-1)p}$ and $m=+\infty.$
\end{itemize}}
\end{rmk}
\begin{proof}
 We concentrate on the proof of the estimates in
$L^r(0,T;L^p_+)$ for $\nabla^2u$ and $\nabla\Pi.$ Indeed,
once the pressure has been determined, $u$ may be seen as a solution
of the heat equation with source term in $L^r(\R_+;L^p_+),$ and is thus given by
\begin{equation}\label{eq:heatformula}
u(t)=r\biggl(e^{\mu t\Delta}e_a(u_0)+\int_0^te^{\mu(t-\tau)\Delta}e_a(f-\nabla \Pi)\,d\tau\biggr)\cdotp
\end{equation}
Therefore combining Corollary \ref{c:besov}, Lemma \ref{lem3.1} and
Lemma \ref{l:extension} allows to bound $u(t)$ in $\dot
B^{2-\f2r}_{p,r}(\R^d_+)$ in terms of  the data and of the norm of
$\nabla \Pi$ in $L^r(0,t;L^p_+).$
In addition, because any function in  $L^r(\R_+;L^p_+)$ may be approximated by
smooth functions compactly supported in $\R_+\times\R^d_+,$ and because $u_0$ may be approximated
by functions in $W^{2,p}(\R^d_+)\cap W^{1,p}_0(\R^d_+),$
the above formula guarantees that $u$ is continuous in time, with values in  $\dot B^{2-\f2r}_{p,r}(\R^d_+)$
(or in $\dot\cB^{2-\f2r}_{p,r}(\R^d_+)$ if $\dive u\equiv0$).
\medbreak
In what follows, we assume that $\mu=1,$ which is not restrictive
owing to  the change of variables \eqref{eq:change}.
Of course, when proving estimates,  one may  consider separately the
three cases where only  one element of the triplet $(u_0,f,g)$ is
nonzero, a consequence of the fact that \eqref{eq:stokes} is linear.

\subsubsection*{Step 1. Case $u_0\equiv0$ and $f\equiv0$}

Then the formula for $u^d$ given by Theorem \ref{th2.1} reduces to
$$
u^d=Pg+U\int_0^te^{(t-\tau)\Delta}e_a(Gk)\,d\tau,
$$
and using the algebraic relations provided by Lemma \ref{l:U} thus yields
$$
\begin{aligned}
&\nabla_h^2u^d=SU\nabla_h g+U\int_0^te^{(t-\tau)\Delta}\nabla_h^2e_a(Gk)\,d\tau,\\
&\nabla_h\partial_du^d=(I-U)\nabla_hg+(I-U)\int_0^te^{(t-\tau)\Delta}\nabla_h|D_h|e_a(Gk)\,d\tau,\\
&\p_d^2u^d=(\p_d+(U\!-\!I)|D_h|)g+r\int_0^t\!e^{(t-\tau)\Delta}\p_d|D_h|e_a(Gk)\,d\tau+(I\!-\!U)\!\int_0^t\!e^{(t-\tau)\Delta}\D_he_a(Gk)\,d\tau.
\end{aligned}
$$
The important fact is that all the terms corresponding initially to $Pg$ may
be written  $A\nabla g$ where $A$ stands for some $0$-th order operator
for which Lemma \ref{l:extension} applies.
A similar observation holds for the terms with the time integral so that applying  Lemma \ref{lem3.1} eventually yields
\begin{equation}\label{eq:u1}
\|\nabla^2u^d\|_{L^r_T(L^p_+)}\leq C\bigl(\|\nabla g\|_{L^r_T(L^p_+)}
+\|Gk\|_{L^r_T(L^p_+)}\bigr).
\end{equation}
At this point, we use Lemma \ref{l:extension} to bound the right-hand side by
$\|(\nabla g,\p_tQ)\|_{L^r_T(L^p_+)},$ and   we thus get
\begin{equation}\label{eq:u2}
\|\nabla^2u^d\|_{L^r_T(L^p_+)}
\leq C\|(\nabla g,\p_tQ)\|_{L^r_T(L^p_+)}.
\end{equation}
It is clear that $\nabla^2u^h$ also satisfies \eqref{eq:u2}: indeed \eqref{vhu0} gives
\begin{equation}\label{eq:u3}
\nabla^2u^h=r\int_0^te^{(t-\tau)\Delta}\nabla^2Se_a(Gk)\,d\tau
-S\nabla^2 u^d.
\end{equation}
Let us now concentrate on the pressure.
Keeping in mind \eqref{2.10} and \eqref{eq:comp}, we may write
$$\displaylines{
 \quad\pa_d\Pi=(\p_d-|D_h|)g-S\cdot U\p_tQ^h-(I-U)\p_tQ^d-UGk
  \hfill\cr\hfill-\bigl(r(|D_h|-\p_d)|D_h|+U\Delta\bigr)\int_0^te^{(t-\tau)\D }e_a(Gk)\,d\tau.\quad}
$$
Therefore, combining Lemmas \ref{l:extension}  and  \ref{lem3.1} gives
\begin{equation}\label{eq:pi2}
\| \p_d\Pi  \|_{L^r_T(L^p_+)}\leq  C\|(\p_tQ,\nabla
g)\|_{L^r_T(L^p_+)}.
\end{equation}
Finally, because
\begin{equation}\label{eq:pi3}
\nabla_h\Pi=-SGk-S\p_d\Pi,
\end{equation}
it is clear that $\nabla_h\Pi$ also satisfies \eqref{eq:pi2}.

\subsubsection*{Step 2. Case $f\equiv0$ and $g\equiv0$}

With no loss of generality, one may assume that $u_0\in W^{1,p}_0(\R^d_+)\cap W^{2,p}(\R^d_+).$
 {}From Theorem \ref{th2.1} and Lemma \ref{l:U}, we readily get
$$
\begin{aligned}
\nabla_h^2u^d&=Ue^{t\Delta}\nabla_h^2e_a(V_d(u_0)),\\
\nabla_h\partial_du^d&=(I-U)e^{t\Delta}\nabla_h|D_h|e_a(V_d(u_0)),\\
\p_d^2u^d&=re^{t\Delta}\p_d|D_h|e_a(V_d(u_0))+r(I-U)e^{t\Delta}e_a(\D_hV_d(u_0)).
\end{aligned}
$$
Therefore, combining Corollary \ref{c:besov} and Lemma \ref{l:extension},
$$\begin{array}{lll}
\|\nabla^2u^d\|_{L^r_T(L^p_+)}&\!\!\!\leq\!\!\!&  C\Bigl(
\|e_a(\nabla_h^2V_d(u_0))\|_{\dot
B^{-\f2r}_{p,r}(\R^d)}\\&&\hspace{1cm}+\|e_a(\nabla_h|D_h|V_d(u_0))\|_{\dot
B^{-\f2r}_{p,r}(\R^d)}
+\|e_s(\p_d|D_h|V_d(u_0))\|_{\dot B^{-\f2r}_{p,r}(\R^d)}\Bigr)\\&\!\!\!\leq\!\!\!&
C\|u_0\|_{\dot B^{2-\f2r}_{p,r}(\R^d_+)}.
\end{array}
$$
Note that in order to bound the last term, we used the fact that because  $V_du_0$ is null at the boundary, we have
$$|D_h|\p_d e_a(V_d(u_0))=e_s(\p_d|D_h|V_d(u_0))\in \dot B^{-\frac 2r}_{p,r}(\R^d).$$
Owing to \eqref{vhu0}, $\nabla^2u^h$ satisfies the same inequality.
Finally,
$$
\p_d\Pi=-\bigl(r(|D_h|-\p_d)|D_h|+U\Delta\bigr)e^{t\Delta}e_a(V_du_0)
$$
hence, according to Lemma \ref{l:extension} and Corollary \ref{c:besov},
$$\begin{aligned}
\|\p_d\Pi\|_{L^r_T(L^p_+)}&\leq  C\|\nabla^2e^{t\Delta}e_a(V_du_0)\|_{L^r_T(L^p)}\\
&\leq C\|u_0\|_{\dot B^{2-\f2r}_{p,r}(\R^d_+)}.
\end{aligned}
$$
Of course, \eqref{eq:pi3} implies that $\nabla_h\Pi$ has the same bound.

\subsubsection*{Step 3. Case $u_0\equiv0$ and $g\equiv0$}

As in the previous steps, owing to $$
\begin{array}{lll}
\na^2u^h\!\!\!&=&\!\!\! r\int_0^te^{(t-\tau)\D}\na^2e_a(\wt{M}f)\,d\tau-S\na^2u^d,\\[1ex]
\na_h\Pi\!\!\!&=&\!\!\! SGf-S\p_d\Pi,
\end{array}
$$
 it suffices to bound $\nabla^2u^d$ and $\p_d\Pi.$
The formulae for the second spatial derivatives of $u^d$ now read
$$
\begin{aligned}
\nabla_h^2u^d&=U\int_0^te^{(t-\tau)\Delta}\nabla_h^2e_a(\wt Nf)\,d\tau,\\
\nabla_h\partial_du^d&=(I-U)\int_0^te^{(t-\tau)\Delta}\nabla_h|D_h|e_a(\wt Nf)\,d\tau,\\
\p_d^2u^d&=r\int_0^te^{(t-\tau)\Delta}\p_d|D_h|e_a(\wt
Nf)\,d\tau+(I-U) \int_0^te^{(t-\tau)\Delta}\D_he_a(\wt Nf)\,d\tau.
\end{aligned}
$$
Therefore applying Lemmas \ref{l:extension} and  \ref{lem3.1},
$$
\begin{aligned}
\|\nabla^2u^d\|_{L^r_T(L^p_+)}&\leq  C\|e_a(\wt Nf)\|_{L^r_T(L^p_+)}\\
&\leq C\|f\|_{L^r_T(L^p_+)}.
\end{aligned}
$$
For the pressure, we have
$$
\p_d\Pi-f^d=-U\wt Nf    -\bigl(r(|D_h|-\p_d)|D_h|+U\Delta\bigr)\int_0^te^{(t-\tau)\Delta}e_a(\wt Nf)\,d\tau,
$$
therefore, using once again Lemmas  \ref{l:extension} and  \ref{lem3.1}, we obtain
$$
\begin{aligned}
\|\p_d\Pi-f^d\|_{L^r_T(L^p_+)}&\leq  C\bigl(\|U\wt Nf\|_{L^r_T(L^p_+)}+\|e_a(\wt Nf)\|_{L^r_T(L^p)}\bigr)\\
&\leq  C\|f\|_{L^r_T(L^p_+)}.
\end{aligned}
$$

\subsubsection*{Step 4. Estimates for $\nabla u$}
The starting point is  the following classical Gagliardo-Nirenberg inequality on $\R^d$:
\begin{equation}\label{eq:GN}
\|\nabla z\|_{L^m(\R^d)}\leq C\|z\|_{\dot B^{2-\f2r}_{p,r}(\R^d)}^{1-\theta}\|\nabla^2z\|_{L^p(\R^d)}^\theta
\end{equation}
with $\theta\in(0,1],$ $m\geq p,$ $0\leq 1-\f2r+\f{2\theta}r\leq\f
dp$ and $\f dm=\f dp-1+\f2r-\f{2\theta}r\cdotp$
 \smallbreak  If $\th\in (0,1)$ then this inequality  may be easily proved by
decomposing $u$ into low and high frequencies by means of an
homogeneous Littlewood-Paley decomposition (see e.g. \cite{BCD}
Chap. 2 for the proof of similar inequalities). The case  $\th=1$ corresponds to the
classical Sobolev inequality. We omit the proof as it is standard.
  \smallbreak We claim that this inequality extends to the half-space setting if considering
functions $u\in W^{1,p}_0(\R^d_+)\cap W^{2,p}(\R^d_+).$ Indeed, we
observe that for such functions we have the following identities:
$$
\nabla_h(e_au)=e_a(\nabla_hu)\quad\hbox{and}\quad
\p_d(e_au)=e_s(\p_du).
$$
As $\nabla_h(e_au)$ also has null trace at $\pa\R^d_+,$  one can thus write
$$
\nabla_h^2(e_au)=e_a(\nabla_h^2u)\quad\hbox{and}\quad
\p_d\nabla_h(e_au)=e_s(\p_d\nabla_hu).
$$
Even though  $\p_d(e_au)$ need not be zero at  $\pa\R^d_+,$ it is symmetric with
respect to the vertical variable, whence
$$
\p_d^2(e_au)=e_a(\p_d^2u).
$$
Applying \eqref{eq:GN} to $z=e_au,$  and
the above relations for the second order derivatives, we thus gather
$$\begin{array}{lll}
 \|\nabla u\|_{L^m_+}&\leq&  \|\nabla(e_au)\|_{L^m(\R^d)},\\[1.5ex]
 &\leq&  C\|e_au\|_{\dot B^{2-\f2r}_{p,r}(\R^d)}^{1-\theta}\|\nabla^2(e_au)\|_{L^p(\R^d)}^\theta,\\[1.5ex]
 &\leq& C\|u\|_{\dot B^{2-\f2r}_{p,r}(\R^d_+)}^{1-\theta}\|\nabla^2u\|_{L^p_+}^\theta.
\end{array}
$$
Hence, taking the $L^q$ norm with respect to time of both sides (with $q=r/\theta$), we discover that
$$
 \|\nabla u\|_{L^q_T(L^m_+)}\leq  C\|u\|_{L^\infty_T(\dot B^{2-\f2r}_{p,r}(\R^d_+))}^{1-\theta}
 \|\nabla^2u\|_{L^r_T(L^p_+)}^\theta.
 $$
Bounding the right-hand side according to the previous steps leads
to the desired estimate of $ \|\nabla u\|_{L^q_T(L^m_+)}$ in \eqref{eq:ineq1}.
%
\end{proof}

In order to solve System \eqref{INS} for more general data, it will be
suitable  to extend the above estimates to the case where the index of regularity of $u_0$ \emph{is not} related to $r.$
This motivates the following statement:
\begin{prop}\label{p:stokesbis}  {\sl Let $1<p,r<\infty$ and $0<s<2.$   Let $(u_0,f,g)$ satisfies the compatibility conditions
of Proposition \ref{p:stokes}, and be such that
$$u_0\in\dot \cB^{s}_{p,r}(\R^d_+),\quad
t^\alpha(f,\nabla g,\p_tQ)\in L^r(\R_+;L^p_+)\ \hbox{ with }\ \alpha\eqdefa1-\frac s2-\frac1r\cdotp
$$
If $(u,\nabla\Pi)$ is a solution of System \eqref{eq:stokes}  with $t^\alpha(\p_tu,\nabla^2u,\nabla\Pi)\in L^r(\R_+;L^p_+)$ then for all $T>0,$
\begin{equation}\label{eq:Pi}
\|t^\alpha(\p_tu,\nabla^2u,\nabla\Pi)\|_{L^r_T(L^p_+)}\leq C\Bigl(\|u_0\|_{\dot B^s_{p,r}(\R^d_+)}
+\|t^\alpha(f,\nabla g,\p_tQ)\|_{L^r_T(L^p_+)}\Bigr)\cdotp
\end{equation}
Furthermore, the following properties hold true:
\begin{enumerate}
\item For any couple $(p_2,r_2)$ so that
$$
s<1+\frac dp-\frac d{p_2}< 2+\frac2{r_2}-\frac2r,\quad p_2\geq p,\quad r_2\geq r
$$
we have  $t^\beta\nabla u\in L^{r_2}(\R_+;L^{p_2}_+)$ with
$\beta=\frac12-\frac{d}{2p_2}+\frac d{2p}-\frac s2-\frac1{r_2},$ and
$$
\|t^\beta\nabla u\|_{L^{r_2}_T(L^{p_2}_+)}\leq C\Bigl(\|u_0\|_{\dot B^{s-\frac dp+\frac d{p_2}}_{p_2,r_2}(\R^d_+)}
+\|t^\alpha(f,\nabla g,\p_tQ)\|_{L^r_T(L^p_+)}\Bigr)\cdotp
$$
\item
For any couple $(p_3,r_3)$ so that
$$s<\frac dp-\frac d{p_3}<2+\frac2{r_3}-\frac2r,\quad p_3\geq p,\quad r_3\geq r,$$
we have $t^\gamma u\in L^{r_3}(\R_+;L^{p_3}_+)$
with  $\gamma=-\frac{d}{2p_3}+\frac d{2p}-\frac s2-\frac1{r_3},$ and
$$
\|t^\gamma u\|_{L^{r_3}_T(L^{p_3}_+)}\leq C\Bigl(\|u_0\|_{\dot B^{s-\frac dp+\frac d{p_3}}_{p_3,r_3}(\R^d_+)}
+\|t^\alpha(f,\nabla g,\p_tQ)\|_{L^r_T(L^p_+)}\Bigr)\cdotp
$$\end{enumerate}
}\end{prop}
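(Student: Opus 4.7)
My plan is to reduce all estimates to heat-semigroup estimates in the whole space, by combining the explicit solution formulas of Theorem \ref{th2.1}, the heat-kernel characterization of homogeneous Besov norms, and a weighted maximal regularity for the heat equation. After the scaling \eqref{eq:change} one may assume $\mu=1$. The expressions in Theorem \ref{th2.1} together with Lemmas \ref{l:U} and \ref{l:extension} allow one to rewrite $\nabla^2 u$ and $\nabla\Pi$ as linear combinations of three families of terms: (a) bounded ($0$th-order) operators applied to $\nabla g$ or $\p_t Q$, which are trivially controlled in the weighted norm $\|t^\alpha\cdot\|_{L^r_T(L^p_+)}$; (b) a free part of the shape $\nabla^2 e^{t\Delta}e_a(Wu_0)$ with $W\in\{V_h,V_d,\Id\}$; and (c) a Duhamel part $\int_0^t\nabla^2 e^{(t-\tau)\Delta}e_a(Fh)(\tau)\,d\tau$ with $F$ a $0$th-order operator and $h\in\{f,\nabla g,\p_tQ\}$.

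\medskip
Family (b) is handled by the classical characterization
$$
\|v\|_{\dot B^s_{p,r}(\R^d)}^r\sim \int_0^\infty t^{(1-s/2)r}\|\nabla^2e^{t\Delta}v\|_{L^p(\R^d)}^r\frac{dt}{t}=\|t^\alpha\nabla^2e^{t\Delta}v\|_{L^r(\R_+;L^p(\R^d))}^r,
$$
applied with $v=e_a(Wu_0)$ and combined with the boundedness of $e_a$ and $W$ on the relevant Besov spaces. Family (c) reduces to the weighted maximal regularity inequality
$$
\Bigl\|t^\alpha\int_0^t\nabla^2e^{(t-\tau)\Delta}h\,d\tau\Bigr\|_{L^r(\R_+;L^p(\R^d))}\leq C\|t^\alpha h\|_{L^r(\R_+;L^p(\R^d))},
$$
which is the main technical obstacle. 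The unweighted version ($\alpha=0$) is Lemma \ref{lem3.1}; the weighted version follows from extrapolation theory for the underlying Calder\'on--Zygmund operator, because the power weight $t^{\alpha r}$ belongs to the Muckenhoupt class $A_r(\R_+)$ precisely when $-1<\alpha r<r-1$, i.e.\ $0<s<2$, which is exactly the range imposed on $s$. Once \eqref{eq:Pi} is established, the bound on $\p_t u$ in the same weighted norm follows at once from the velocity equation itself.

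\medskip
For items (1) and (2), once $\nabla\Pi$ has been controlled in the weighted space, the representation \eqref{eq:heatformula} expresses $u$ as the sum of the heat extension of $u_0$ and a Duhamel integral of $f-\nabla\Pi$. Bounding $\nabla u$ or $u$ in the mixed $L^{r_j}(L^{p_j})$ spaces with power weights $t^\beta$ or $t^\gamma$ then proceeds along the same two steps: the free part is handled by a heat-characterization of $\dot B^{s-d/p+d/p_j}_{p_j,r_j}(\R^d)$, whose index is dictated by scaling and matches precisely the values of $\beta$ and $\gamma$ stated, while the Duhamel part requires a weighted $L^r(L^p)\to L^{r_j}(L^{p_j})$ smoothing estimate for the heat semigroup, of Hardy--Littlewood--Sobolev type, proved by the same $A_r$-weight/extrapolation argument combined with the dispersive decay of the heat kernel. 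The strict inequalities imposed on the exponents keep the power weights away from the endpoints of the Muckenhoupt class, which is what makes the extrapolation applicable in both statements.
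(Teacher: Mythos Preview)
Your overall architecture---use Theorem~\ref{th2.1} to reduce to (a) zeroth-order operators on the data, (b) the free heat flow of $u_0$, (c) a Duhamel integral---matches the paper exactly, as does the treatment of the free part through the heat characterization of $\dot B^{s}_{p,r}$ and the reduction of items (1)--(2) to the representation \eqref{eq:heatformula}. The genuine difference lies in how you justify the weighted estimates on the Duhamel part. You invoke Muckenhoupt $A_r$-extrapolation: the maximal-regularity operator is Calder\'on--Zygmund in time, and $t^{\alpha r}\in A_r(\R_+)$ exactly when $-1<\alpha r<r-1$, i.e.\ $0<s<2$. This is correct and gives a clean conceptual explanation of why the range $0<s<2$ appears. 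The paper instead proves the weighted maximal regularity (Lemma~\ref{l:A}) by a self-contained elementary argument: one writes the equation satisfied by $t^\alpha v$, uses the unweighted estimate, and handles the commutator term $\alpha t^{\alpha-1}v$ by Minkowski's inequality and a change of variables. For the off-diagonal smoothing estimates needed in items (1)--(2), the paper again proceeds elementarily (Lemmas~\ref{l:B} and~\ref{l:C}): it uses the pointwise $L^p\!\to\!L^q$ decay of the heat kernel and manipulates the resulting one-dimensional weighted convolution directly, rather than appealing to extrapolation. Your description of these as ``HLS-type plus extrapolation'' is morally right but somewhat imprecise---pure $A_r$-extrapolation does not by itself yield off-diagonal $L^r(L^p)\to L^{r_2}(L^{p_2})$ bounds; one still needs the kernel decay, after which the argument is essentially the paper's. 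In short: your route through weight theory is valid and more conceptual, while the paper's route is more elementary and self-contained; both arrive at the same place.
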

\begin{proof}
Let us first assume that only $g$ is nonzero.
Then we start with the formula
$$
t^\alpha\nabla^2_hu^d=SUt^\alpha\nabla_hg+Ut^\alpha\int_0^te^{(t-\tau)\Delta}\nabla^2_he_a(Gk)\,d\tau.
$$
{}From  Lemmas \ref{l:extension} and \ref{l:A}, we immediately infer that, if $\alpha r'<1$ then for all $T>0,$
$$
\|t^\alpha\nabla^2_hu^d\|_{L^r_T(L^p_+)}\leq C\bigl(\|t^\alpha\nabla_hg\|_{L^r_T(L^p_+)}
+\|t^\alpha e_a(Gk)\|_{L^r_T(L^p(\R^d))}\bigr),
$$
 whence
\begin{equation}\label{eq:1}
\|t^\alpha\nabla^2_hu^d\|_{L^r_T(L^p_+)}\leq C\|t^\alpha(\nabla g,\p_tQ)\|_{L^r_T(L^p_+)}.
\end{equation}
Similarly, as
$$
t^\alpha\nabla_h\p_du^d=(I-U)t^\alpha\nabla_hg+(I-U)t^\alpha\int_0^te^{(t-\tau)\Delta}\nabla_h|D_h|e_a(Gk)\,d\tau,
$$
and
$$\displaylines{
t^\alpha\p_d^2u^d=(\p_d+(U-I)|D_h|)t^\alpha
g+t^\alpha\int_0^te^{(t-\tau)\Delta}\p_d|D_h|e_a(Gk)\,d\tau
\hfill\cr\hfill+(I-U)t^\alpha\int_0^te^{(t-\tau)\Delta}\Delta_h
e_a(Gk)\,d\tau,}
$$
it is clear that $\|t^\alpha\nabla^2u^d\|_{L^r_T(L^p_+)}$ is bounded by the right-hand side of \eqref{eq:1}. Because
$$
t^\alpha \nabla^2 u^h=r\,t^\alpha\int_0^te^{(t-\tau)\Delta}\nabla^2Se_a(Gk)\,d\tau
-t^\alpha S\nabla^2u^d,$$
the same inequality holds true for $t^\alpha \nabla^2 u^h.$
\smallbreak
In order to bound the pressure, we use the fact that
$$\displaylines{
t^\alpha\p_d\Pi=t^\alpha(\p_d-|D_h|)g-t^\alpha S\cdot U\p_tQ^h-(I-U)t^\alpha \p_tQ^d-t^\alpha UGk\hfill\cr\hfill
-\bigl(r(|D_h|-\p_d)|D_h|+U\Delta\bigr)t^\alpha\int_0^te^{(t-\tau)\Delta}e_a(Gk)\,d\tau.}
$$
Note that the terms in the right-hand side may be handled
by means of Lemmas \ref{l:extension} and \ref{l:A}, exactly as we did for $t^\alpha\nabla^2_hu^d.$
Therefore, we have
$$
\|t^\alpha\p_d\Pi\|_{L^r_T(L^p_+)}\leq C\|t^\alpha(\nabla g,\p_tQ)\|_{L^r_T(L^p_+)}.
$$
Owing to \eqref{eq:pi3}, it is clear that $t^\alpha\nabla_h\Pi$ satisfies the same inequality.
\medbreak
Let us now  consider the case $f\equiv0,$ $g\equiv0$ and $u_0\in W^{1,p}_0(\R^d_+)\cap  W^{2,p}(\R^d_+)$
(with no loss of generality).  As usual, because
one may go from $u^d$ to $u^h$ through
$$u^h=re^{t\Delta}e_a(V_hu_0)-Su^d,$$
we concentrate on $t^\beta\nabla^2 u^d.$
We start with the formula
$$
t^\alpha\nabla^2_hu^d(t)=Ut^\alpha e^{t\Delta}e_a(\nabla_h^2 V_du_0)
=Ut^\alpha e^{t\Delta}\nabla_h^2 V_d e_a(u_0),
$$
which, in view of Lemmas \ref{l:extension} and \ref{l:D} ensures that
$$
\|t^\alpha\nabla^2_hu^d\|_{L_T^r(L^p_+)}\leq C \|V_d e_a(u_0)\|_{\dot B^s_{p,r}(\R^d)}
\leq C\|u_0\|_{\dot B^s_{p,r}(\R^d_+)}\quad\hbox{with }\  \alpha=1-\frac s2-\frac1r\cdotp
$$
Similarly, we have
$$
t^\alpha\nabla_h\p_du^d=(I-U)t^\alpha e^{t\Delta}\nabla_h|D_h| e_a(V_du_0),
$$
hence
\begin{equation}\label{eq:2}
\|t^\alpha\nabla_h\p_du^d\|_{L_T^r(L^p_+)}\leq C\|u_0\|_{\dot B^s_{p,r}(\R^d_+)}.
\end{equation}
Finally, $t^\alpha\p_d^2u^d$ satisfies
$$\begin{array}{ccc}
t^\alpha\p_d^2u^d&=&rt^\alpha e^{t\Delta} e_s(\p_d|D_h|V_du_0)+(I-U)t^\alpha e^{t\Delta}e_a(\Delta_hV_du_0)\\[1ex]
&=&rt^\alpha e^{t\Delta} \p_d|D_h|e_a(V_du_0)+(I-U)t^\alpha
e^{t\Delta}e_a(\Delta_hV_du_0)\end{array}
$$
because $e_s(\p_d V_du_0)=\p_de_a(V_du_0)$ owing to the fact that $V_du_0$ vanishes on $\p\R^d_+.$
Hence $t^\alpha\p_d^2u^d$ satisfies \eqref{eq:2}, too,  and we conclude that
\begin{equation}
\|t^\alpha \nabla^2 u^d\|_{L_T^r(L^p_+)}\leq C\|u_0\|_{\dot B^s_{p,r}(\R^d_+)}.
\end{equation}
Bounding $\nabla\Pi$ is strictly analogous.
\medbreak
In order to prove the estimate for $t^\alpha \nabla^2u$ in the case $g\equiv0$ and $u_0\equiv0,$
we use  that
$$\displaylines{
t^\alpha\nabla^2_hu^d=U\:t^\alpha\int_0^te^{(t-\tau)\Delta}\nabla_h^2e_a(\wt
Nf)\,d\tau, \quad
t^\alpha\nabla_h\p_du^d=(I-U)\,t^\alpha\int_0^te^{(t-\tau)\Delta}\nabla_h|D_h|e_a(\wt
Nf)\,d\tau\cr \hbox{and}\quad t^\alpha\p_d^2u^d=r\int_0^t
e^{(t-\tau)\Delta}\p_d|D_h|e_a(\wt Nf)\,d\tau +(I-U)\,t^\alpha
\int_0^te^{(t-\tau)\Delta}\Delta_he_a(\wt Nf)\,d\tau.}
$$
Then combining Lemmas \ref{l:extension} and \ref{l:A} readily gives
$$
\|t^\alpha\nabla^2u^d\|_{L_T^r(L^p_+)}\leq C\|t^\alpha f\|_{L_T^r(L^p_+)}.
$$
Bounding $t^\alpha\nabla^2 u_h$ and $t^\alpha\nabla\Pi$ works the same.
\medbreak
Let us finally go to the proof of estimates for $t^\beta\nabla u$ and $t^\gamma u.$
By virtue of \eqref{eq:heatformula} and of the definition of $\cB$ (see the appendix), we have
$$
t^\beta\nabla u(t)=r\Bigl(t^\beta\nabla e^{t\Delta}e_a(u_0)+t^\beta\cB e_a(f-\nabla\Pi)\Bigr)\cdotp
$$
Therefore, applying Lemmas \ref{l:B} and \ref{l:D} yields
$$
\|t^\beta\nabla u\|_{L^{r_2}_T(L^{p_2}_+)} \leq  C\Bigl(\|e_a(u_0)\|_{\dot B^{s_2}_{p_2,r_2}(\R^d)}
+\|t^{\alpha}e_a(f-\nabla\Pi)\|_{L_T^r(L^p(\R^d))}\Bigr),
$$
whenever $p_2\geq p,$  $r_2\geq r,$  $s_2=\frac d{p_2}-\frac dp+s,$
$$
\beta=\alpha+\frac d2\biggl(\frac1p-\frac1{p_2}\biggr)-\frac12+\f1r-\f1{r_2}=\frac12-\frac {s_2}2-\frac1{r_2}
$$
and
$$
s<1+\frac d{p}-\frac d{p_2}<2+\frac2{r_2}-\frac2r\cdotp
$$
Combining with the fact that $e_a$ is continuous on functions of $\dot B^{s_2}_{p_2,r_2}(\R^d_+)$
with null trace at the boundary,
and with \eqref{eq:Pi},  we get
$$
\|t^\beta\nabla u\|_{L^{r_2}_T(L^{p_2}_+)} \leq  C\Bigl(\|u_0\|_{\dot B^{s_2}_{p_2,r_2}(\R^d_+)}
+\|t^{\alpha}(f,\nabla g,\p_tQ)\|_{L_T^r(L^p_+)}\Bigr)\cdotp
$$
Finally, in order to bound $t^\gamma u,$ we use the formula
$$
t^\gamma u(t)=r\Bigl(t^\gamma e^{t\Delta}e_a(u_0)+t^\gamma\cC e_a(f-\nabla\Pi)\Bigr)\cdotp
$$
 Applying Lemmas \ref{l:C} and \ref{l:D} yields
$$
\|t^\gamma u\|_{L^{r_3}_T(L^{p_3}_+)} \leq  C\Bigl(\|e_a(u_0)\|_{\dot B^{s_3}_{p_3,r_3}(\R^d)}
+\|t^{\alpha}e_a(f-\nabla\Pi)\|_{L_T^p(L^r(\R^d))},
$$
with $p_3,\geq p,$ $r_3\geq r,$  $s_3=\frac d{p_3}-\frac dp+s,$
$$
\gamma=\alpha+\frac
d2\biggl(\frac1p-\frac1{p_3}\biggr)-1+\f1r-\f1{r_3}=-\frac
{s_3}2-\frac1{r_3}
$$
and
$$
s<\frac d{p}-\frac d{p_3}< 2+\frac2{r_3}-\frac2r\cdotp
$$
Combining with the fact that $e_a$ is continuous on functions of $\dot B^{s_3}_{p_3,r_3}(\R^d_+)$ with null trace,
and with \eqref{eq:Pi}, we get
$$
\|t^\gamma  u\|_{L^{r_3}_T(L^{p_3}_+)} \leq  C\Bigl(\|u_0\|_{\dot B^{s_3}_{p_3,r_3}(\R^d_+)}
+\|t^{\gamma}(f,\nabla g,\p_tQ)\|_{L_T^r(L^p_+)}\Bigr).
$$
This completes the proof of the proposition.
\end{proof}

\setcounter{equation}{0}
\section{Existence of smooth solutions}

As a first step for proving Theorem \ref{th:main}, we here establish the global existence  of strong solutions for \eqref{INS}
\emph{in the case of a globally Lipschitz bounded density}. As for
the velocity, we assume that it has  slightly sub-critical regularity.
Here is our statement (recall that the space $X^{p,r}$ has been defined in  \eqref{normxpr}):


\begin{thm}\label{thm:auxiliary}
{\sl  Let $a_0\in W^{1,\infty}(\R^d_+)$ and $
u_0\in \dot{\cB}^{-1+\f{d}{p}}_{p,r}(\R^d_+)\cap
\dot{\cB}^{-1+\f{d}{p}}_{\wt p,r}(\R^d_+)$ with  $p=\f{dr}{3r-2},$
$r\in (1,\infty)$ and $d<\wt p \leq\f{dr}{r-1}\cdotp$
  There exist two positive constants $c_0=c_0(r,d)$ and $c_1=c_1(r,d)$  so that if
  \eqref{small1} holds,
 then
\eqref{INS} has a unique global solution  $(a,u,\nabla\Pi)$ with
$a\in L^\infty_{loc}(\R_+;W^{1,\infty}(\R^d_+)),$
$$
(u,\nabla\Pi)\in X^{p,r}\cap X^{\wt p,r}\ \hbox{ and }\ \nabla u\in
L^{2r}(\R_+;L^{\f{dr}{2r-1}}_+)\cap L^{q}(\R_+;L^\infty_+)\ \hbox{
with }\ q=\frac{2\wt p}{d+(\f2r\,-1)\wt p}\cdotp
$$
In addition, there exist $C_i=C_i(r,d),$ $ i=1,2,3,4,$  so that
  \begin{eqnarray}\label{eq:th2a}
  &\|u^h\|_{\mathfrak{X}^{p,r}}+\mu^{1-\f1{2r}}\|\nabla u^h\|_{L^{2r}(\R_+;L^{\f{dr}{2r-1}}_+)}\leq C_1\mu^{1-\f1r}\eta_0,\\
\label{eq:th2c} &\|(u,\nabla\Pi)\|_{X^{p,r}}+\mu^{1-\f1{2r}}\|\nabla
u\|_{L^{2r}(\R_+;L^{\f{dr}{2r-1}}_+)}\leq C_2\mu^{1-\f1r}
  \|u_0\|_{\dot{B}^{-1+\f{d}{p}}_{p,r}(\R^d_+)},\\
   \label{eq:th2b} &\quad \|(u,\nabla\Pi)\|_{X^{\wt p,r}}\!+\!\mu^{1-\f1{2r}}\bigl(\|\na u\|_{L^{2r}(\R^+;L^{\alpha}_+)}
 \!+\!\mu^{\f1q-\f1{2r}}\|\na u\|_{L^{q}(\R^+;L^{\infty}_+)}\bigr)\\
\nonumber&\qquad\qquad\qquad\qquad\qquad\qquad \leq
C_3\mu^{1-\f1r}\|u_0\|_{\dot{B}^{-1+\f{d}{p}}_{\wt
 p,r}(\R^d_+)}\exp\Bigl(C_4\mu^{-2r}{\|u_0^d\|_{\dot{B}^{-1+\f{d}{p}}_{p,r}(\R^d_+)}^{2r}}\Bigr),
    \end{eqnarray}  where $\eta_0$ is given by \eqref{small1} and $\alpha$ satisfies
$\f1{\wt p}=\f1\alpha+\f{r-1}{dr}\cdotp$ }\end{thm}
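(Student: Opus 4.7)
\medbreak\noindent\textbf{Proof proposal.}
The plan is to construct the solution via an iterative scheme driven by the linear Stokes estimates of Proposition \ref{p:stokes}, combined with the special structural formula \eqref{add.2} for the horizontal components of the free flow. Starting from $a^0\equiv 0$ and $u^0=u_F,$ the free Stokes flow emanating from $u_0,$ one defines $a^{n+1}$ by transporting $a_0$ along $u^n,$ and $(u^{n+1},\na\Pi^{n+1})$ as the solution of the linear Stokes system
\beq\label{eq:iter-plan}
\p_tu^{n+1}-\mu\D u^{n+1}+\na\Pi^{n+1}=-u^n\cdot\na u^n+a^n(\mu\D u^n-\na\Pi^n),\quad\dive u^{n+1}=0,
\eeq
with homogeneous Dirichlet condition and initial datum $u_0.$ Since the density is merely transported, $\|a^n(t)\|_{L^\infty_+}=\|a_0\|_{L^\infty_+}$ is preserved exactly; its $W^{1,\infty}$ norm will be propagated thanks to $\na u^n\in L^1_{loc}(L^\infty_+),$ which is afforded by the hypothesis $\wt p>d.$

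The key anisotropic input is the following observation for the free flow. Proposition \ref{p:stokes} bounds $u_F$ in both $X^{p,r}$ and $X^{\wt p,r}$ by the full norms of $u_0,$ but formula \eqref{add.2} expresses $u_F^h$ through Riesz-type operators acting on the heat semigroup applied only to $u_0^h.$ Combining \eqref{add.2} with Lemma \ref{l:extension} and Corollary \ref{c:besov}, one gets the sharper bound
$$
\|u_F^h\|_{\mathfrak{X}^{p,r}}+\mu^{1-\f1{2r}}\|\na u_F^h\|_{L^{2r}(\R_+;L^{\f{dr}{2r-1}}_+)}\lesssim \mu^{1-\f1r}\|u_0^h\|_{\dot B^{-1+\f dp}_{p,r}(\R^d_+)}.
$$
This, together with smallness of $\mu\|a_0\|_{L^\infty_+},$ will be the seed of the smallness parameter $\eta_0$ and is the essential reason why \eqref{small1} involves only the horizontal norm.

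I would then run a triple bootstrap on the three quantities appearing in \eqref{eq:th2a}--\eqref{eq:th2b}. Applying Proposition \ref{p:stokes} to \eqref{eq:iter-plan} and splitting the convective term as $u^n\cdot\na u^n=u^{n,h}\cdot\na_h u^n+u^{n,d}\p_du^n,$ the density contribution $a^n(\mu\D u^n-\na\Pi^n)$ is absorbed by the left-hand side because $\|a^n\|_{L^\infty_+}=\|a_0\|_{L^\infty_+}$ is small. The convective term is handled by H\"older's inequality in space and time, distributing the integrability between $u^n$ in the Gagliardo--Nirenberg spaces $L^{2r}(L^{dr/(2r-1)}_+)$ and $L^q(L^\infty_+)$ supplied by Proposition \ref{p:stokes}, and $\na u^n$ in $L^r(L^{p}_+)$ or $L^r(L^{\wt p}_+).$ For the estimate of $u^{n+1,h}$ in $\mathfrak{X}^{p,r},$ every nonlinear term carries at least one horizontal factor (using $\dive u^n=0$ to rewrite $\p_du^{n,d}=-\dive_hu^{n,h}$ where needed), which produces a bound by $C_1\mu^{1-1/r}\eta_0$ plus a small contraction. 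Similarly, the full $X^{p,r}$ bound closes provided $\eta_0$ is small enough.

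The main obstacle, and the origin of the exponential factor in \eqref{eq:th2b}, is the $X^{\wt p,r}$ estimate: there the term $u^{n,d}\p_du^{n+1,h}$ cannot be made small by \eqref{small1} alone. Instead, picking $\beta$ so that $1/\wt p=1/\beta+1/\alpha$ and using H\"older's inequality in time, one gets
$$
\|u^{n,d}\p_du^{n+1,h}\|_{L^r_T(L^{\wt p}_+)}\leq \int_0^T\|u^{n,d}(\tau)\|_{L^\beta_+}\|\p_du^{n+1,h}(\tau)\|_{L^\alpha_+}\,d\tau,
$$
with $\|u^{n,d}\|_{L^{2r}(\R_+;L^\beta_+)}$ already controlled by $\|u_0^d\|_{\dot B^{-1+d/p}_{p,r}(\R^d_+)}$ through the second part of Proposition \ref{p:stokes} applied to the $X^{p,r}$ bound. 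Denoting by $Y_{n+1}(T)$ the truncated $X^{\wt p,r}$-norm of $(u^{n+1},\na\Pi^{n+1})$ one then derives an inequality of Gronwall type
$$
Y_{n+1}(T)\leq A\,\|u_0\|_{\dot B^{-1+\f dp}_{\wt p,r}(\R^d_+)}+B\int_0^T\|u^{n,d}(\tau)\|_{L^\beta_+}^{2r}Y_{n+1}(\tau)\,d\tau,
$$
which, upon integration, produces the exponential factor $\exp\bigl(C_4\mu^{-2r}\|u_0^d\|_{\dot B^{-1+d/p}_{p,r}}^{2r}\bigr).$ Uniform bounds in hand, passage to the limit $n\to\infty$ by standard compactness and strong convergence arguments, together with the transport estimate for $a^{n+1}$ using $\na u^n\in L^q(L^\infty_+),$ yields the global smooth solution; uniqueness is postponed to Section \ref{s:uniqueness}.
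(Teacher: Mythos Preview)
Your proposal contains a structural inconsistency that makes the argument break down. You define the iteration \eqref{eq:iter-plan} with a \emph{fully explicit} right-hand side $-u^n\cdot\nabla u^n+a^n(\mu\Delta u^n-\nabla\Pi^n),$ yet when you reach the $X^{\wt p,r}$ estimate you speak of ``the term $u^{n,d}\p_du^{n+1,h}$'' and derive a Gronwall inequality for $Y_{n+1}(T)$ with $Y_{n+1}(\tau)$ on the right. With your explicit scheme no such term exists: the convective contribution is $u^{n,d}\p_du^{n,h},$ and the inequality you would get has $Y_n,$ not $Y_{n+1},$ on the right, so Gronwall does not apply and the exponential factor cannot be produced this way.

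More seriously, the explicit scheme cannot close the horizontal estimate \eqref{eq:th2a} when $\|u_0^d\|_{\dot B^{-1+d/p}_{p,r}}$ is large. The source term for $u^{h,n+1}$ contains $u^{d,n}\p_du^{h,n},$ which by H\"older and the induction hypotheses is bounded by a constant times $\mu^{-1}\|u_0\|_{\dot B^{-1+d/p}_{p,r}}\cdot\eta_0.$ Since $\|u_0^d\|$ is allowed to be arbitrarily large, this is not $\lesssim\eta_0$ and the bound $\|u^{h,n+1}\|_{\mathfrak X^{p,r}}\leq C_1\mu^{1-1/r}\eta_0$ fails to propagate. This is precisely why the paper uses the \emph{semi-implicit} scheme \eqref{eq:scheme1}, in which $u^{d,n}\p_du^{h,n+1}$ sits on the left and is absorbed via Proposition \ref{p:stokesconv} after introducing the exponential weight $h^n_\lambda(t)=\exp\bigl(-\lambda\mu^{1-2r}\int_0^t\|\nabla u^{d,n}\|_{L^{dr/(2r-1)}_+}^{2r}d\tau\bigr).$ Inequalities \eqref{eq:aux1}--\eqref{eq:aux2} for this weighted linear problem are what make both the horizontal smallness and the exponential factor in \eqref{eq:th2b} emerge simultaneously. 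Your use of \eqref{add.2} for the free flow is correct and is indeed part of the mechanism (it enters the proof of Proposition \ref{p:stokesconv}), but it is not enough on its own.

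A secondary point: the paper does not pass to the limit by compactness here. Because $a_0\in W^{1,\infty},$ one can write genuine stability estimates (losing one derivative on the density, see \eqref{eq:da}) and prove that $(u^n,\nabla\Pi^n)$ is Cauchy in an intermediate space $X^{\check p,r}_T$ with $\check p\in(d/2,\tfrac{dr}{2r-1}),$ and that $(a^n)$ is Cauchy in $\cC_b([0,T]\times\R^d_+).$ Compactness is reserved for the rough-density case in Section \ref{s:existence}.
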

\begin{proof}
The general strategy to prove Theorem \ref{thm:auxiliary} is the
same as in \cite{DM}: we set $(a^0,u^0,\nabla\Pi^0)=(0,0,0)$ and
solve inductively  the following \emph{linear} system:
\begin{equation}\label{eq:scheme1}
 \quad\left\{\begin{array}{l}
\displaystyle \pa_t a^{n+1} + u^{n} \cdot \grad a^{n+1}=0,\qquad \\
\displaystyle \pa_t u^{h,n+1}+u^{d,n}\partial_du^{h,n+1}-\mu\D u^{h,n+1}
+\na_h\Pi^{n+1}=F^{h,n}\\
\displaystyle \pa_t u^{d,n+1}+u^{h,n+1}\cdot\nabla_hu^{d,n}-u^{d,n}\dive_hu^{h,n+1}-\mu\D u^{d,n+1}
+\partial_d\Pi^{n+1}=F^{d,n}\\
\displaystyle \dive\, u^{n+1} = 0, \\
\displaystyle u^{n+1}|_{\partial\R^d_+}=0,\\
 \displaystyle (a^{n+1}, u^{n+1})|_{t=0}=(a_{0},u_{0}),
\end{array}\right.
\end{equation}
where $u^n=(u^{h,n},u^{d,n})$ and $F^{n}= (F^{h,n},F^{d,n})$ with
\beq \label{add.4}
\begin{split} & F^{h,n} \eqdefa
a^{n+1}(\mu\D u^{h,n}-\na_h\Pi^n)-u^{h,n}\cdot \grad_h u^{h,n},\\
&F^{d,n} \eqdefa a^{n+1}(\mu\D u^{d,n}-\p_d\Pi^n). \end{split} \eeq \medbreak
 The global existence and uniqueness of a solution to \eqref{eq:scheme1} in the spaces given in Theorem \ref{thm:auxiliary}
 may be proved inductively. This is standard as regards the existence of $a^{n+1}$
 as   $\nabla u^n$ is
in $L^1_{loc}(\R^+;L^\infty_+).$ As for $(u^{n+1},\nabla\Pi^{n+1}),$ we apply Proposition \ref{p:stokesconv}
with  $v^d=u^{d,n}$ and $f=F^n.$

\subsubsection*{Step 1. Uniform estimates}

It is obvious that
\begin{equation}\label{eq:a0}
\|a^n(t)\|_{L^\infty_+}=\|a_0\|_{L^\infty_+}
\quad\hbox{for all }\ n\in\N\
\hbox{ and }\ t\in\R^+,
\end{equation}
and that
\begin{equation}\label{add.5}
\|\nabla a^{n+1}(t)\|_{L^\infty_+}\leq \exp\Bigl({\int_0^t\|\nabla
u^n\|_{L^\infty_+}\,d\tau}\Bigr)\|\nabla a_0\|_{L^\infty_+}.
\end{equation}
In order to bound $(u^n,\nabla\Pi^n),$  we introduce
$(\wt u^n_\la,\nabla\wt\Pi^n_\la,\wt F^n_\la)(t)\eqdefa h^n_\la(t) (u^n,\nabla\Pi^n, F^n)(t)$
with
$$ h^n_\la(t)\eqdefa\exp\Bigl(-\la\mu^{1-2r}\int_0^t  \|\na
u^{d,n}(\tau)\|_{L^{\f{dr}{2r-1}}_+}^{2r}\,d\tau\Bigr)\cdotp
$$
We shall prove inductively  that there exists  $\lambda_0>0$  such
that  for all $\la\geq\la_0,$ $t>0$  and $n\in\N,$
\begin{eqnarray}\label{eq:est1}
&&\quad\|\wt
u^{h,n}_\la\|_{\mathfrak{X}^{p,r}_t}+\mu^{1-\f1{2r}}\|\nabla\wt
u^{h,n}_\la\|_{L_t^{2r}(L^{\f{dr}{2r-1}})} \leq
C_1\mu^{1-\f1r}(\|u_0^h\|_{\dot B^{-1+\f
dp}_{p,r}}+\|a_0\|_{L^\infty_+}\|u_0\|_{\dot B^{-1+\f
dp}_{p,r}}),\\\label{eq:est2} &&
\quad\|(u^n,\nabla\Pi^n)\|_{X_t^{p,r}}+\mu^{1-\f1{2r}}\|\na
u^n\|_{L^{2r}_t(L^{\f{dr}{2r-1}}_+)}
 \leq C_2\mu^{1-\f1r}\|u_0\|_{\dot B^{-1+\f dp}_{p,r}},\\\label{eq:est3}
 && \quad\|(u^n,\nabla\Pi^n)\|_{X^{\wt p,r}_t}\!+\!\mu^{1-\f1{2r}}\bigl(\|\na u^n\|_{L^{2r}_t(L^{\alpha}_+)}
 +\mu^{\f1q-\f1{2r}}\|\na u^n\|_{L^{q}_t(L^{\infty}_+)}\bigr)
 \\&&\hspace{5.5cm}\leq C_3\mu^{1-\f1r}\|u_0\|_{\dot B^{-1+\f dp}_{\wt p,r}}
 \exp\Bigl(C_4\mu^{-2r}\|u_0\|_{\dot{B}^{-1+\f{d}{p}}_{p,r}(\R^d_+)}^{2r}\Bigr)\cdotp\nonumber
 \end{eqnarray}
We shall use repeatedly the fact that \eqref{eq:est2} implies that
\begin{equation}\label{add:0}
\sup_{t\in\R_+} \frac1{h^n_\la(t)}\leq \exp\Bigl(\lambda C_2^{2r}\mu^{-2r}{\|u_0\|_{\dot{B}^{-1+\f{d}{p}}_{p,r}(\R^d_+)}^{2r}}\Bigr)\cdotp
\end{equation}
In order  to prove the critical regularity estimates (namely \eqref{eq:est1} and \eqref{eq:est2}), we use
the fact that by virtue of  \eqref{eq:a0}, H\"older inequality (recall that $p=\f{dr}{3r-2}$) and Sobolev embedding
\begin{equation}\label{eq:sobemb}
W^{1,\frac{dr}{2r-1}}_0(\R^d_+)\hookrightarrow
L^{\f{dr}{r-1}}(\R^d_+),
\end{equation}
we have
$$
\|\wt F^n_\la\|_{L_t^{r}(L^p_+)}\leq \|a_0\|_{L^\infty_+}\|\mu\Delta\wt u^n_\la-\nabla\wt\Pi^n_\la\|_{L_t^r(L^p_+)}
+C\|\nabla u^{h,n}\|_{L_t^{2r}(L^{\f{dr}{2r-1}})} \|\nabla \wt u^{h,n}_\la\|_{L_t^{2r}(L^{\f{dr}{2r-1}})}.
$$
Combining with Inequality \eqref{eq:aux1}, we thus get for $\lambda\geq \la_0,$
\begin{multline}\label{add.1}
\|\wt
u^{h,n+1}_\la\|_{\mathfrak{X}^{p,r}_t}+\mu^{1-\f1{2r}}\|\nabla\wt
u^{h,n+1}_\la\|_{L_t^{2r}(L^{\f{dr}{2r-1}})} \leq
C\bigl(\mu^{1-\f1r}\|u_0^h\|_{\dot B^{-1+\f
dp}_{p,r}}\\+\|a_0\|_{L^\infty_+}\|\mu\Delta\wt
u^n_\la-\nabla\wt\Pi^n_\la\|_{L_t^r(L^p_+)} +\|\nabla
u^{h,n}\|_{L_t^{2r}(L^{\f{dr}{2r-1}})} \|\nabla \wt
u^{h,n}_\la\|_{L_t^{2r}(L^{\f{dr}{2r-1}})}\bigr).
\end{multline}
Therefore, taking advantage of \eqref{eq:est1}, \eqref{eq:est2} and \eqref{add:0}, and assuming that $C_4\geq\lambda C_2^{2r},$
$$\displaylines{\quad\|\wt u_\la^{h,n+1}\|_{\mathfrak{X}^{p,r}_t}+\mu^{1-\f1{2r}}\|\nabla\wt u_\la^{h,n+1}\|_{L_t^{2r}(L^{\f{dr}{2r-1}})}
\leq C\mu^{1-\f1r}\bigl(\|u_0^h\|_{\dot B^{-1+\f
dp}_{p,r}}+\|a_0\|_{L^\infty_+}\|u_0\|_{\dot B^{-1+\f
dp}_{p,r}}\bigr) \hfill\cr\hfill+CC_1 (\mu^{-1}\wt\eta_0)\,
\mu^{1-\f1{2r}}\|\nabla\wt
u^{h,n}_\la\|_{L_t^{2r}(L^{\f{dr}{2r-1}})}}
$$
with
$$\wt\eta_0\eqdefa (\|u_0^h\|_{\dot B^{-1+\f dp}_{p,r}}+\|a_0\|_{L^\infty_+}\|u_0\|_{\dot B^{-1+\f dp}_{p,r}}) \exp\Bigl(C_4\mu^{-2r}{\|u_0\|_{\dot{B}^{-1+\f{d}{p}}_{p,r}(\R^d_+)}^{2r}}\Bigr)\cdotp$$
Hence we get \eqref{eq:est1} at rank $n+1$ if, say,  $2CC_1\wt\eta_0\leq\mu$ and $C_1$ has been taken larger than $2C.$
\medbreak
Next, denoting
 $$G^n\eqdefa(F^{h,n}-u^{d,n}\pa_d u^{h,n+1},F^{d,n}-u^{h,n+1}\cdot\nabla_h u^{d,n}+u^{d,n}\dive_h u^{h,n+1}),$$
 we get, arguing as for bounding $\wt F^n_\la$:
$$\displaylines{\quad
\|G^n\|_{L_t^{r}(L^p_+)}\leq C\bigl(\|a_0\|_{L^\infty_+}\|\mu\Delta u^n-\nabla\Pi^n\|_{L_t^r(L^p_+)}
\hfill\cr\hfill+\|\nabla u^{n}\|_{L_t^{2r}(L^{\f{dr}{2r-1}})}(\|\nabla u^{h,n}\|_{L_t^{2r}(L^{\f{dr}{2r-1}})}+\|\nabla u^{h,n+1}\|_{L_t^{2r}(L^{\f{dr}{2r-1}})})\bigr).\quad}
$$
Thus, applying Proposition \ref{p:stokes} to System \eqref{eq:scheme1} guarantees that
$$\displaylines{
\|(u^{n+1},\nabla\Pi^{n+1})\|_{X^{p,r}_t}+\mu^{1-\f1{2r}}\|\nabla u^{n+1}\|_{L_t^{2r}(L^{\f{dr}{2r-1}})}
\leq C\bigl(\mu^{1-\f1r}\|u_0\|_{\dot B^{-1+\f dp}_{p,r}}\hfill\cr\hfill+\|a_0\|_{L^\infty_+}\|\mu\Delta u^n-\nabla\Pi^n\|_{L_t^r(L^p_+)}
+\|\nabla u^{n}\|_{L_t^{2r}(L^{\f{dr}{2r-1}})}(\|\nabla u^{h,n}\|_{L_t^{2r}(L^{\f{dr}{2r-1}})}+\|\nabla u^{h,n+1}\|_{L_t^{2r}(L^{\f{dr}{2r-1}})})\bigr).}
$$
Then, inserting \eqref{eq:est1} at rank $n$ and $n+1,$ and \eqref{eq:est2}  leads to
$$\displaylines{
\|(u^{n+1},\nabla\Pi^{n+1})\|_{X^{p,r}_t}+\mu^{1-\f1{2r}}\|\nabla u^{n+1}\|_{L_t^{2r}(L^{\f{dr}{2r-1}})}\hfill\cr\hfill
\leq C\mu^{1-\f1r}\|u_0\|_{\dot B^{-1+\f dp}_{p,r}}\bigl(1+C_2\|a_0\|_{L^\infty_+}+2C_1C_2\mu^{-1}\wt\eta_0\bigr).}
$$
Hence, if  $C_1=2C$ then  \eqref{eq:est2} is fulfilled at rank $n+1$ whenever $\mu^{-1}\wt\eta_0$  is  small enough.
\medbreak
Let us now turn to the proof of the regularity estimate \eqref{eq:est3}. The starting point is that,
combining  \eqref{eq:sobemb} and H\"older inequality (here the assumption $\wt p\leq\f{dr}{r-1}$
comes into play) yields,
$$
\|\wt F^n_\la\|_{L_t^{r}(L^{\wt p}_+)}\leq \|a_0\|_{L^\infty_+}\|\mu\Delta\wt u^n_\la-\nabla\wt\Pi^n_\la\|_{L_t^r(L^{\wt p}_+)}
+C\|\nabla u^{h,n}\|_{L_t^{2r}(L^{\alpha}_+)} \|\nabla \wt u^{h,n}_\la\|_{L_t^{2r}(L^{\f{dr}{2r-1}})}.
$$
which implies, according to \eqref{eq:aux2},
$$\displaylines{\quad\|\wt u^{h,n+1}_\la\|_{\mathfrak{X}_t^{\wt
p,r}}\!+\!\mu^{1-\f1{2r}}\|\na \wt u^{h,n+1}_\la\|_{L^{2r}_t(L^{\alpha}_+)}\leq
C\bigl(\mu^{1-\f1r}\|u_0^h\|_{\dot
B^{-1+\f{d}p}_{\wt{p},r}(\R^d_+)}\hfill\cr\hfill+\|a_0\|_{L^\infty}\|(\wt u^n_\la,\nabla\wt\Pi^n_\la)\|_{X_t^{\wt p,r}}
+\bigl(\|\nabla\wt u^{h,n}_\la\|_{L_t^{2r}(L^{\f{dr}{2r-1}}_+)} + \|\nabla\wt u^{h,n+1}_\la\|_{L_t^{2r}(L^{\f{dr}{2r-1}}_+)}\bigr)
\|\nabla u^n\|_{L_t^{2r}(L^\alpha_+)} \bigr),}
 $$
whence, taking advantage of \eqref{eq:est1} at ranks $n$ and $n+1$, and of  \eqref{eq:est3},
 $$\displaylines{\quad
\|\wt u^{h,n+1}_\la\|_{\mathfrak{X}_t^{\wt
p,r}}+\mu^{1-\f1{2r}}\|\na \wt u^{h,n+1}_\la\|_{L^{2r}_t(L^{\alpha}_+)}\hfill\cr\hfill\leq C
\mu^{1-\f1r}\|u_0\|_{\dot B^{-1+\f{d}p}_{\wt{p},r}(\R^d_+)}\biggl(1+C_3 \exp\Bigl({C_4 \mu^{-2r}\|u_0\|_{\dot
B^{-1+\f{d}p}_{p,r}(\R^d_+)}^{2r}}\Bigr)\|a_0\|_{L^\infty_+}+2C_1C_3\mu^{-1}\wt\eta_0\biggr)\quad}  $$
 which implies,  assuming that $\mu^{-1}\wt\eta_0$ is small enough and using \eqref{add:0},
 \begin{multline}\label{add.3}
 \|u^{h,n+1}\|_{\mathfrak{X}_t^{\wt p,r}}\!+\!\mu^{1-\f1{2r}}\|\na
u^{h,n+1}\|_{L^{2r}_t(L^{\alpha}_+)} \\\leq C'_3\mu^{1-\f1r}\|u_0\|_{\dot
B^{-1+\f{d}p}_{\wt{p},r}(\R^d_+)}\exp\Bigl({\lambda C_2^{2r}\mu^{-2r}\|u_0\|_{\dot
B^{-1+\f{d}p}_{p,r}(\R^d_+)}^{2r}}\Bigr)\cdotp
\end{multline}
 Furthermore, applying  Proposition \ref{p:stokes} to \eqref{eq:scheme1} we see that
 $$\displaylines{\quad
 \|(u^{n+1},\nabla\Pi^{n+1})\|_{X^{\wt p,r}_t}\!+\!\mu^{1-\f1{2r}}\bigl(\|\na u^{n+1}\|_{L^{2r}_t(L^{\alpha}_+)}
 \!+\!\mu^{\f1q-\f1{2r}}\|\na u^{n+1}\|_{L^{q}_t(L^{\infty}_+)}\bigr)\hfill\cr\hfill\leq
C\Bigl(\mu^{1-\f1r}\|u_0\|_{\dot B^{-1+\f dp}_{\wt
p,r}(\R^d_+)}+\|G^n\|_{L^{r}_t(L^{\wt p}_+)}\Bigr).}$$
 Now, combining  \eqref{eq:a0}, \eqref{eq:sobemb} and H\"older inequality, we discover that
 $$\displaylines{\quad
 \|G^n\|_{L^{r}_t(L^{\wt p}_+)}\leq \|a_0\|_{L^\infty_+}\bigl(\mu\|\D
u^n\|_{L^{r}_t(L^{\wt p}_+)}+\|\na\Pi^n\|_{L^{r}_t(L^{\wt
p}_+)}\bigr)+\|\nabla u^{h,n\!+\!1}\|_{L^{2r}_t(L^{\alpha}_+)}\|\na
u^{d,n}\|_{L^{2r}_t(L^{\f{dr}{2r-1}}_+)}\hfill\cr\hfill
+\bigl(\|\nabla u^{h,n}\|_{L^{2r}_t(L^{\f{dr}{2r-1}}_+)}+\|\nabla u^{h,{n\!+\!1}}\|_{L^{2r}_t(L^{\f{dr}{2r-1}}_+)}\bigr) \|\na
u^{n}\|_{L^{2r}_t(L^{\alpha}_+)}.\quad }
$$ Therefore, using \eqref{add.3} and
  the induction hypotheses   \eqref{eq:est1}, \eqref{eq:est2} and
\eqref{eq:est3},
 $$ \displaylines{
 \|(u^{n+1},\nabla\Pi^{n+1})\|_{X_t^{\wt
p,r}}+\mu^{1-\f1{2r}}\bigl(\|\na u^{n+1}\|_{L^{2r}_t(L^{\alpha}_+)}
 +\mu^{\f1q-\f1{2r}}\|\na u^{n+1}\|_{L^{q}_t(L^{\infty}_+)}\bigr)\hfill\cr\hfill\leq
C\mu^{1-\f1r}\|u_0\|_{\dot B^{-1+\f dp}_{\wt p,r}(\R^d_+)}
\Bigl(\bigl(1+C_3\|a_0\|_{L^\infty_+}+C_1C_3\wt\eta_0\mu^{-1}\bigr)
\exp\Bigl({C_4\mu^{-2r}\|u_0\|_{\dot B^{-1+\f{d}p}_{p,r}(\R^d_+)}^{2r}}\Bigr)
\hfill\cr\hfill +C_2C'_3\mu^{-1}\|u_0\|_{\dot B^{-1+\f dp}_{p,r}(\R^d_+)}
\exp\Bigl({\lambda C_2^{2r}\mu^{-2r}\|u_0\|_{\dot
B^{-1+\f{d}p}_{p,r}(\R^d_+)}^{2r}}\Bigr)\Bigr)\cdotp}
$$
  Hence, if $\wt\eta_0/\mu^{-1}$ is small enough and, say, $C_4\geq2\lambda C_2^{2r}$ then
we get \eqref{eq:est3}  at rank $n+1.$
\smallbreak
Now, by using the fact that  $ \|u_0\|_{\dot B^{-1+\f dp}_{p,r}(\R^d_+)}\leq
\eta_0+\|u_0^d\|_{\dot B^{-1+\f dp}_{p,r}(\R^d_+)}$ where $\eta_0$ has been defined in the statement of Theorem \ref{th:main},
it is obvious that under Condition \eqref{small1} for suitable constants $c_0$ and $c_1$ the smallness condition
for $\wt\eta_0/\mu$ is fulfilled.
This completes the proof of \eqref{eq:est1}, \eqref{eq:est2} and \eqref{eq:est3} for all $n\in\N.$
Furthermore, note that one may replace
$u_0$ by $u_0^d$ in the exponential term of \eqref{eq:est3} and \eqref{add.3}.

\subsubsection*{Step 2. Convergence of the sequence}

Let $\check p$ be some real number in $(\f d2,\f{dr}{2r-1})$ such that in addition\footnote{It would
be natural to take $\check p=p$ but we do not know how to handle the case $r\geq2$ with this value of $\check p.$}
 $p\leq\check p\leq\wt p.$  Arguing exactly as for proving \eqref{eq:est3}, we get
 \begin{multline}\label{eq:est4}
  \|(u,\nabla\Pi)\|_{X^{\check p,r}}\!+\!\mu^{1-\f1{2r}}\|\na u\|_{L^{2r}(\R^+;L^{m}_+)}\\
 \leq C_3\mu^{1-\f1r}\|u_0\|_{\dot{B}^{-1+\f{d}{\check p}}_{\wt
 p,r}(\R^d_+)}\exp\Bigl(C_4\mu^{-2r}{\|u_0^d\|_{\dot{B}^{-1+\f{d}{p}}_{p,r}(\R^d_+)}^{2r}}\Bigr)\quad\hbox{with }\
 \frac dm=\f d{\check p}-1+\f1r\cdot
 \end{multline}
We claim that $(a^n)_{n\in\N}$ and
$(u^n,\nabla\Pi^n)_{n\in\N}$ are  Cauchy sequences in
$\cC_b([0,T]\times \R^d_+)$ and $X^{\check p,r}_T,$ respectively, for all $T>0.$
 For proving the convergence of
  $(a^n)_{n\in\N},$ we use the fact that
$$\p_t\da^n+u^n\cdot\nabla\da^n=-\du^{n-1}\cdot\nabla a^n\quad\hbox{with}\quad\da^n\eqdefa a^{n+1}-a^n\ \hbox{ and }\ \du^n\eqdefa u^{n+1}-u^n. $$
Hence, using standard estimates for the transport equation, we get  for all positive $T,$
$$
\|\da^{n}(T)\|_{L^\infty_+} \leq \int_0^T\|\du^{n-1}\|_{L^\infty_+}\|\nabla a^n\|_{L^\infty_+}\,dt.
$$
Now, arguing exactly as in the proof of \eqref{eq:GN}, we get
 the following Gagliardo-Nirenberg inequality\footnote{It suffices to
apply the corresponding inequality in $\R^d$ to function $e_a(z).$}
for functions $z$ vanishing at $\p\R^d_+$:
\begin{equation}\label{eq:GN2}
\|z\|_{L^\infty_+}\leq C\|z\|_{\dot B^{2-\f2r}_{\check p,r}(\R^d_+)}^{\theta} \|\nabla^2z\|_{L^{\check p}_+}^{1-\theta}\quad\hbox{with }\
\theta\eqdefa\biggl(\check p-\f d2\biggr)\f r{\check p}\cdotp
\end{equation}
Hence
$$
\|\da^{n}(T)\|_{L^\infty_+} \leq \int_0^T\|\nabla a^n\|_{L^\infty_+}\|\du^{n-1}\|_{\dot B^{2-\f 2r}_{\check p,r}(\R^d_+)}^{\theta}
\|\nabla^2\du^{n-1}\|_{L^{\check p}_+}^{1-\theta}\,dt.
$$Taking advantage of Young's inequality we thus get for all positive $\e,$
\begin{equation}\label{eq:da}
\|\da^{n}\|_{L^\infty(0,T\times\R^d_+)}\leq
\e\|\nabla^2\du^{n-1}\|_{L_T^r(L^{\check p}_+)}
+C_\e\int_0^T\|\nabla a^{n-1}\|_{L^\infty_+}^{\f{2\check p}{(2\check
p-d)r}}\|\du^{n-1}\|_{\dot B^{2-\f2r}_{\check p,r}(\R^d_+)}\,dt.
\end{equation}
Next, we use the fact that, denoting $\dG^n\eqdefa G^{n+1}-G^n,$ we have
$$
\left\{\begin{array}{l}
\p_t\du^n-\mu\Delta\du^n+\nabla\dPi^n=\dG^n\\[1ex]
\dive\du^n=0\\[1ex]
u^n|_{t=0}=0\quad\hbox{and}\quad u^n|_{\p\R^d_+}=0.
\end{array}\right.
$$
 Applying Proposition \ref{p:stokes}, we see that for some constant $C_0=C_0(p,d),$
\begin{equation}\label{eq:du}
\dU^n(t)\eqdefa
\|(\du^n,\nabla\dPi^n)\|_{X^{\check p,r}_T}+\mu^{1-\f1{2r}}\|\nabla\du^n\|_{L^{2r}_T(L^m_+)}\leq  C_0\|\dG^n\|_{L^r_T(L^{\check p}_+)}.
\end{equation}
Let us  decompose $\dG^n$ into
 $$\dG^n \eqdefa(\mu\Delta u^n-\nabla\Pi^n)\da^{n}
+a^{n}(\mu\Delta\du^{n-1}-\nabla\dPi^{n-1})-\dH^n,$$ with
 $\dH^n=(\dG^{h,n},\dG^{d,n}),$
and $$
\begin{array}{lll}
 \dH^{h,n}&\!\!\!\eqdefa\!\!\!& u^{d,n}\pa_d\du^{h,n}+\du^{d,n-1}\pa_d u^{h,n}+ u^{h,n}\cdot\nabla_h\du^{h,n-1}
 +\du^{h,n-1}\cdot\nabla_h u^{h,n-1},\\[1ex]
\dH^{d,n}&\!\!\!\eqdefa\!\!\!& \du^{h,n}\cdot\nabla_h
u^{d,n}+u^{h,n}\cdot\na_h\du^{d,n-1}-u^{d,n}\dive_h\du^{h,n}-\du^{d,n-1}\dive_h
u^{h,n}.
\end{array}
$$
Using \eqref{eq:a0} and arguing as in the first step of the proof,
we easily get
$$
\displaylines{
 \|\dG^n\|_{L^r_T(L^{\check p}_+)}\leq\Bigl( \|\da^{n}\|_{L^\infty(0,T\times\R^d_+)}\|\mu\Delta u^{n}-\nabla\Pi^n\|_{L^r_T(L^{\check p}_+)}
\hfill\cr\hfill
+\|a_0\|_{L^\infty_+}\|\mu\Delta\du^{n-1}-\nabla\dPi^{n-1}\|_{L^r_T(L^{\check
p}_+)}+\|\dH^n\|_{L^r_T(L^{\check p}_+)}\Bigr)}
$$
and
$$
\displaylines{\|\dH^n\|_{L^{\check p}_+}\leq
\bigl(\|u^n\|_{L_+^{\f{dr}{r-1}}}\|\nabla\du^{h,n}\|_{L_+^{m}}
 +\bigl(\|\nabla u^{h,n-1}\|_{L_+^{\f{dr}{2r-1}}}+\|\nabla u^{h,n}\|_{L_+^{\f{dr}{2r-1}}}\bigr)\|\du^{n-1}\|_{L^{m^*}_+}
 \hfill\cr\hfill+\|u^{h,n}\|_{L_+^{\f{dr}{r-1}}}\|\nabla_h\du^{n-1}\|_{L_+^{m}}
 +\|\du^{h,n}\|_{L^{m^*}_+}\|\nabla_h u^{d,n}\|_{L_+^{\f{dr}{2r-1}}}\Bigr),}
$$
with\footnote{Here $\check p<\f{dr}{2r-1}$ comes into play.} $m^*$ such that $\f d{m^*}=\f d{\check p}-2+\f1r\cdotp$
At this point, one may combine   the following Sobolev embeddings  \eqref{eq:sobemb} and
$$
W^{1,m}_0(\R^d_+)\hookrightarrow L^{m^*}_+(\R^d_+)
$$
and eventually find out  that
$$
\displaylines{\quad\|\dH^n\|_{L_T^r(L^{\check p}_+)}^r\leq C
\biggl(\bigl(\|\nabla u^{h,n-1}\|_{L_T^{2r}(L_+^{\f{dr}{2r-1}})}+\|\nabla u^{h,n}\|_{L_T^{2r}(L_+^{\f{dr}{2r-1}})}\bigr)^r\|\nabla\du^{n-1}\|_{L_T^{2r}(L_+^{m})}^r
\hfill\cr\hfill +\int_0^T\|\nabla u^{n}\|_{L_+^{\f{dr}{2r-1}}}^r\|\nabla\du^{n}\|_{L_+^{m}}^r\,dt\biggr)\cdotp\quad}
$$
Then  combining with the following interpolation inequality which is a particular case of
\eqref{eq:GN}
$$
\|\nabla\du^{n}\|_{L_+^{m}}\leq C\|\du^n\|_{\dot B^{-1+\f dp}_{\check p,r}}^{\f12}\|\nabla^2\du^n\|_{L^{\check p}_+}^{\f12},
$$
 we get for all positive $\varepsilon$:
$$
\displaylines{
 \|\dG^n\|_{L^r_T(L^{\check p}_+)}^r\leq C\biggl(\|\da^{n}\|_{L^\infty(0,T\times\R^d_+)}^r\|(u^{n},\nabla\Pi^{n})\|_{X^{\check p,r}_T}^r+\|a_0\|_{L^\infty_+}^r\|(\du^{n-1},\nabla\dPi^{n-1})\|_{X^{\check p,r}_T}^r
 \hfill\cr\hfill+\bigl(\|\nabla u^{h,n-1}\|_{L_T^{2r}(L_+^{\f{dr}{2r-1}})}^r
 +\|\nabla u^{h,n}\|_{L_T^{2r}(L_+^{\f{dr}{2r-1}})}^r\bigr)\|\nabla\du^{n-1}\|_{L_T^{2r}(L^{m}_+)}^r
\hfill\cr\hfill +\varepsilon\mu^r\|\nabla^2\du^n\|_{L_T^r(L^{\check p}_+)}^r+\frac1{\varepsilon\mu^r}\int_0^T
\|\nabla u^n\|_{L^{\f{dr}{2r-1}}_+}^{2r}\|\du^n\|_{\dot B^{2-\f2r}_{\check p,r}}^r\,dt
 \bigg)\cdotp}
 $$
 Hence, using \eqref{eq:est1}, \eqref{eq:est2}, \eqref{add:0} and the smallness condition \eqref{small1},
 $$\displaylines{
 (C_0 \|\dG^n\|_{L^r_{T}(L^{\check p}_+)})^r\leq
 C\mu^{r-1}\|u_0\|_{\dot B^{-1+\f dp}_{p,r}}^r\exp\Bigl(rC_4\mu^{-2r}\|u_0^d\|_{\dot B^{-1+\f dp}_{p,r}}^{2r}\Bigr)
 \|\da^{n}\|_{L^\infty((0,T)\times\R^d_+)}^r
 \hfill\cr\hfill+C\|a_0\|_{L^\infty_+}^r\|(\du^{n-1},\nabla\dPi^{n-1})\|_{X^{\check p,r}_T}^r
 +C\bigl((\mu^{-1}\eta_0)\mu^{1-\f1{2r}}\|\nabla\du^{n-1}\|_{L_T^{2r}(L^m_+)}\bigr)^r
\hfill\cr\hfill+ C\varepsilon (\mu\|\nabla^2\du^{n}\|_{L^{r}_{T}(L^{\check p}_+)})^r
+\frac C\varepsilon\int_0^t\mu^{1-2r}\|\nabla u^n\|_{L^{\f{dr}{2r-1}}_+}^{2r}\,\mu^{r-1}\|\du^n\|_{\dot B^{-1+\f dp}_{\check p,r}}^r\,dt.}
  $$
 Now, plugging \eqref{eq:da} with suitably small $\e$ in the above inequality, using \eqref{add.5},
  resuming to \eqref{eq:du}  and applying Gronwall inequality  yields
 $$\displaylines{
( \dU^n(T))^r\leq C\exp\biggl(C\mu^{1-2r}\int_0^T\|\nabla u^n\|_{L^{\f{dr}{2r-1}}_+}^{2r}\,dt\biggr)
\biggl( (\mu^{-1}\eta_0)^r ( \dU^{n-1}(T))^r\hfill\cr\hfill+ A_0\biggl(\int_0^T\alpha(t)\,dt\biggr)^{r-1}
\int_0^T\alpha(t)( \dU^{n-1}(t))^r\,dt\biggr)}
 $$
 where $A_0$ depends only on the initial data, and $\alpha$ is a continuous function which may be determined
 from the bounds in \eqref{add.5} and \eqref{eq:est3}.
 \smallbreak
 Let us emphasize that the exponential term may be bounded by means of \eqref{eq:est2}.
 Therefore, summing up over $n\geq1,$ we eventually get for a large enough constant $C$
  $$\displaylines{
\sum_{n\geq0} \dU^n(T))^r\leq  \exp\Bigl(C\mu^{-2r}\|u_0\|_{\dot B^{-1+\f dp}_{p,r}}^{2r}\Bigr)
\biggl( (\dU^0(T))^r+  C(\mu^{-1}\eta_0)^r \sum_{n\geq0} \dU^n(T))^r
\hfill\cr\hfill +  CA_0\biggl(\int_0^T\alpha(t)\,dt\biggr)^{r-1}
\int_0^T\alpha(t)\biggl( \sum_{n\geq0}(\dU^{n}(t))^r\biggr)\,dt.}
$$
 If the smallness condition \eqref{small1} is satisfied then
  the second term of the right-hand side may be absorbed by the left-hand side (we have to
 take a larger constant $c_1$ in the definition of $\eta_0$ if need be).
  Applying again   Gronwall lemma, it is now easy to conclude that
  $(u^n,\nabla\Pi^n)_{n\in\N}$ is a   Cauchy sequence in  $X^{\check p,r}_{T}.$
 Then resuming to \eqref{eq:da} implies that $(a^n)_{n\in\N}$ is a Cauchy sequence in
 $\cC_b([0,T]\times \R^d_+).$
This completes the proof of the convergence.

\subsubsection*{Step 3. End of the proof of the theorem}

 Granted with the convergence result of the previous step, and the uniform bounds of the first step,
it is not difficult to pass to the limit  in \eqref{eq:scheme1}: we
conclude that the triplet $(a,u,\nabla\Pi)$ with
$a=\lim_{n\rightarrow+\infty}a^n,$ $u=\lim_{n\rightarrow+\infty}
u^n,$ and $\nabla\Pi=\lim_{n\rightarrow+\infty}\nabla\Pi^n,$
satisfies \eqref{INS} in the sense of Definition \ref{defi2.1}. In addition, as $(u^n,\nabla\Pi^n)_{n\in\N}$
is bounded in the space $X^{p,r}\cap X^{\wt p,r}$ which possesses the Fatou
property, one may conclude that  $(u,\nabla\Pi)\in X^{p,r}\cap X^{\wt p,r}$
and\footnote{Rigorously speaking we do not get the time continuity for $u,$ but it may be
recovered afterward from Proposition \ref{p:stokes} by observing that $u$ satisfies an evolutionary
Stokes equation with source term in $L^r(\R_+;L^p_+\cap L^{\wt p}_+)$ and initial
data in $\dot\cB^{2-\frac2r}_{p,r}(\R^d_+)\cap \dot\cB^{2-\frac2r}_{\wt p,r}(\R^d_+)$.}  that \eqref{eq:th2a}, \eqref{eq:th2b} are fulfilled.
Similarly, the uniform bounds
for $a^n$ allow to conclude that $a\in
L^\infty_{loc}(\R_+;W^{1,\infty}(\R^d_+)).$ This
completes the proof of Theorem \ref{thm:auxiliary}
 \end{proof}


\section{Proving the existence part of the main theorem}\label{s:existence}

\setcounter{equation}{0}

This section is dedicated to the proof of the existence part of
Theorem \ref{th:main}. It is mostly based on {\it a priori}
estimates for smooth solutions --the same as in the previous
section, and on compactness arguments.

\subsubsection*{Step 1. Constructing a sequence of smooth solutions}

This is only a matter of smoothing out the  data $(a_0,u_0)$  so as to  apply Theorem \ref{thm:auxiliary}.
We proceed as follows:\begin{itemize}
\item Let $\chi\in C_c^\infty(\R^d)$ with $\chi(x)=1$ for $|x|\leq 1.$ We extend $a_0$ to $\wt a_0$ on $\R^d$ by symmetry
then use convolution of $\chi(x/n)\wt a_0$ with a sequence of
nonnegative radially symmetric mollifiers, then restrict to the
half-space. We get a sequence $(a_0^n)_{n\in\N}$ in
$W^{1,\infty}(\R^d_+) $ with the same lower and
upper bounds as $a_0,$ and satisfying $a_0^n\to a_0$ a. e. on
$\R^d_+.$
\item   As $u_0\in \dot\cB^{-1+\f dp}_{p,r}(\R^d_+),$ one may  take the antisymmetric extension $\wt u_0=e_a(u_0)$ of $u_0$ over the whole space
 and then set
$$u_0^n=r\biggl(\sum_{|j|\leq n}\dot\Delta_j\wt u_0\biggr)\cdotp$$
   It is obvious that each term $u_0^n$ is smooth, divergence free, and antisymmetric (and thus $\gamma u_0^n\equiv0$).   Furthermore, $(u_0^n)_{n\in\N}$ converges to
$u_0$ in  $\dot B^{-1+\f dp}_{p,r}(\R^d_+)\cap  \dot
B^{-1+\f{d}p}_{\wt p,r}(\R^d_+).$ Of course, one may find $\check p$
in $(d,\f{dr}{r-1})$ so that each $u_0^n$ belongs to $
\dot\cB^{-1+\f{d}p}_{\check p,r}(\R^d_+).$
\end{itemize}

 \subsubsection*{Step 2. Uniform estimates}

 Let us solve system \eqref{INS}
 with regularized initial data $(a_0^n,u_0^n)$ according to Theorem \ref{thm:auxiliary}.  We get a global solution $(a^n,u^n,\nabla\Pi^n)$ in
 $L^\infty_{loc}(\R_+;W^{1,\infty}(\R^d_+))\times (X^{p,r}\cap X^{\check p,r})$ satisfying
 $$
 \|a^n\|_{L^\infty(\R_+\times\R^d_+)}=\|a_0^n\|_{L^\infty(\R^d_+)}\leq \|a_0\|_{L^\infty(\R^d_+)},$$
 and also
 \begin{equation}\label{3.1aa}
\begin{split}
&\|u^{h,n}\|_{\mathfrak{X}^{p,r}}+\mu^{1-\f1{2r}}\|\nabla u^{h,n}\|_{L^{2r}(\R_+;L^{\f{dr}{2r-1}}_+)}\leq C_1\mu^{1-\f1r}\eta_0,\\
&\|(u^{d,n},\nabla\Pi^n)\|_{X^{p,r}}+\mu^{1-\f1{2r}}\|\nabla
u^{d,n}\|_{L^{2r}(\R_+;L^{\f{dr}{2r-1}}_+)}\leq C_2\mu^{1-\f1r}
  \|u_0\|_{\dot{B}^{-1+\f{d}{p}}_{p,r}(\R^d_+)}.
\end{split}
 \end{equation}
In addition, by following the computations leading to
\eqref{eq:th2b} for $u^n,$ it is not difficult to see that the
assumption that $\wt p>d$ is not needed if it is only a matter of
getting a control on  the norm of the solution in $X^{\wt p,r}.$
Therefore we also have
 \begin{equation}\label{3.1bb}
 \|(u^n,\nabla\Pi^n)\|_{X^{\wt p,r}}\leq C_3\mu^{1-\f1r}\|u_0\|_{\dot B^{\f d{\wt p}-1}_{\wt p,r}(\R^d_+)}
 \exp\Bigl(C_4\mu^{-2r}\|u_0^d\|_{\dot{B}^{-1+\f{d}{p}}_{p,r}(\R^d_+)}^{2r}\Bigr).
\end{equation}
Of course, if  $\wt p>d,$ then we also have  a bound for $\nabla u^n$ in $L^q(\R_+;L^\infty_+).$

\subsubsection*{Step 3. The proof of convergence}

Owing to the low regularity of $a_0,$ it is not clear that one may still use stability estimates
in order to prove the convergence of the sequence defined in the previous step.
In effect, as pointed out in the previous section, there is a loss of one derivative in the stability estimates
for the density.
Therefore, we shall use compactness arguments instead,
borrowed from  \cite{HPZ3}.
 For completeness, we outline the proof here.

 According to the previous step,  $(\p_t u^n)_{n\in\N}$ is uniformly bounded in $L^r(\R_+;L^p_+).$ Combining with \eqref{3.1aa}, \eqref{3.1bb},
Ascoli-Arzela Theorem and compact embeddings in Besov spaces, we
conclude that there exists a subsequence, of
$(a^n,u^n,\nabla\Pi^n)_{n\in\N}$ (still denoted by
$(a^n,u^n,\nabla\Pi^n)_{n\in\N}$) and some $(a,u, \nabla\Pi)$ with
$a\in L^\infty(\R_+\times\R^d_+),$
$$\na
u\in L^{2r}(\R_+; L^{\f{dr}{2r-1}}_+)\quad\hbox{and}\quad \p_tu,\nabla^2u, \na\Pi\in
L^r(\R_+;L^{p}_+\cap L^{\wt p}_+)$$ (and also $\nabla u\in L^q(\R_+;L^\infty_+)$ if $\wt p>d$),
 such that \beq \label{3.6a}
\begin{aligned}
&a^n \rightharpoonup  a\quad \mbox{weak}\ \ast \ \mbox{in}\
L^\infty(\R_+\times\R^d_+),\\
& \nabla^2 u^n \rightharpoonup \nabla^2 u \quad\mbox{and}\quad \na\Pi^n
\rightharpoonup \na\Pi\quad \mbox{weakly}\ \ \mbox{in}\ L^r(\R_+;L_+^p),
\end{aligned}
\eeq
with in addition for all small enough $\eta>0,$
\beq\label{3.6aa}
\begin{aligned}
&  u^n \rightarrow u \quad \mbox{strongly}\  \ \mbox{in}\
L^{2r}_{loc}(\R_+; L^{\f{dr}{r-1}-\eta}_{loc}(\R^d_+)),\\
&  \na u^n \rightarrow \na u \quad \mbox{strongly}\  \ \mbox{in}\
L^{2r}_{loc}(\R_+; L^{\f{dr}{2r-1}-\eta}_{loc}(\R^d_+)).\\
\end{aligned}
\eeq
 By construction, $(a^n, u^n,\nabla\Pi^n)$ satisfies \beq\label{3.6b}
\begin{aligned}
&\int_0^\infty\int_{\R^d_+}a^n(\p_t\phi+u^{n}\cdot\na\phi)\,dx\,dt+\int_{\R^d_+}\phi(0,x)a_{0}^n(x)\,dx=0,\\
&\hspace{3cm} \int_0^\infty\int_{\R^d_+}\phi\dive u^n\,dx\,dt=0\qquad\mbox{and}\\
 &
\int_0^\infty\int_{\R^d_+}\Bigl\{u^n\cdot\p_t\Phi+((u^{n}\otimes u^{n}) :\na \Phi\bigr) +(1+a^{n})(\mu\na
u^n-\na\Pi^n)\cdot\Phi\Bigr\}\,dx\,dt\\
&\hspace{9cm}+\int_{\R^d_+}u_{0}^n\cdot\Phi(0,x)\,dx=0,
\end{aligned}
\eeq for all  test functions $\phi, \Phi$ given by Definition
\ref{defi2.1}.
\medbreak
Putting \eqref{3.6a} and \eqref{3.6aa} together,
it is easy to pass to the limit in all the terms of \eqref{3.6b}, except in $a^n(\mu\Delta u^n-\nabla\Pi^n).$
To handle that term, it suffices  to show that $a^n\to a$ in $L^m_{loc}(\R_+\times\R^d_+)$ for
some~$m\geq r'.$
\medbreak
Now,  it is easy to observe from the transport equation that
 \beno \p_t (a^n)^2+\dive (u^{n} (a^n)^2)=0, \eeno
from which, \eqref{3.6a} and \eqref{3.6aa}, we deduce that \beq \p_t
\overline{a^2}+\dive (u\overline{a^2})=0, \label{3.6c} \eeq where we
denote by $\overline{a^2}$ the weak $\ast$ limit of
$((a^n)^2)_{n\in\N}.$ \medbreak Thanks to \eqref{3.6a},
\eqref{3.6aa} and \eqref{3.6b}, there holds \beno \p_ta+\dive(u a)=0
\eeno in the sense of distributions. Moreover, as $\na u\in
L^{2r}(\R_+; L^{\f{dr}{2r-1}}(\R^d_+))$ and $\dive u=0,$  we infer by a mollifying
argument as that in \cite{LP} that \beq\label{3.6d}
\p_ta^2+\dive(ua^2)=0. \eeq Subtracting \eqref{3.6d} from
\eqref{3.6c}, we obtain \beq\label{3.6e} \p_t
(\overline{a^2}-a^2)+\dive (u(\overline{a^2}-a^2))=0, \eeq from
which and Theorem II.2 of \cite{LP} concerning the uniqueness of
solutions to transport equations, we infer
 \beno
(\overline{a^2}-a^2)(t,x)=0\quad \mbox{a. e. }\
x\in\R^d_+\quad\mbox{and}\quad t\in \R^+.\eeno Together with the
fact that $(a^n)_{n\in\N}$ is uniformly bounded in
$L^\infty(\R_+\times\R^d_+),$  this implies that \beq\label{3.6f}
a^n\ \rightarrow\ a\quad \mbox{strongly in}\quad
L^m_{loc}(\R_+\times\R^d)\quad\hbox{for all }\ m<\infty. \eeq Granted with
this new information, it is now easy to pass to the limit   in
\eqref{3.6b}. Therefore  $(a,u,\nabla\Pi)$ satisfies \eqref{def2.1a}
and \eqref{def2.1b}. Moreover, thanks to \eqref{3.1aa} and
\eqref{3.1bb}, there hold \eqref{thm1aa}, \eqref{thm1ab}  and \eqref{thm1b}.
Besides, as $(u,\nabla\Pi)$ satisfies \eqref{eq:velocity} and the r.h.s. is in $L^r(\R_+;L^p_+\cap L^{\wt p}_+),$ the time
continuity for $u$ stems from Proposition \ref{p:stokes}.  This
completes the proof of the existence part of Theorem \ref{th:main}.


\section{More general data}\label{s:general}

\setcounter{equation}{0}
Until now, we  assumed that $p$ and $r$ where interrelated through
\begin{equation}\label{eq:relation}
-1+\frac dp=2-\frac 2r\cdotp
\end{equation}
It is natural to investigate  whether the inhomogeneous Navier-Stokes equations may
still be solved with initial data $(a_0,u_0)$ in $L^\infty_+\times \dot\cB^{-1+\frac dp}_{p,r}(\R^d_+)$
if \eqref{eq:relation} is \emph{not} satisfied.

The case where $1<r<2p/(3p-d)$ or, equivalently $p<\frac{dr}{3r-2}$ is not
so interesting because, by embedding one may find some $p^*\in(p,d)$ so that
$u_0\in \dot\cB^{-1+\frac d{p^*}}_{p^*,r}(\R^d_+)$ and \eqref{eq:relation} is fulfilled by $(p^*,r).$

The case where $r>2p/(3p-d)$ is more involved and cannot be solved  by taking advantage of embeddings.
In order to explain how this may be overcome anyway,
let us first focus on the toy case where $u$ satisfies  the
basic heat equation
$$\pa_tu-\Delta u=0\quad\hbox{in}\quad\R_+\times\R^d
$$
with initial data  $u^0\in \dot B^{-1+\frac dp}_{p,r}(\R^d).$
Then  by using
embedding in  $\dot B^{-1+\frac d{\wt p}}_{\wt p,\wt r}(\R^d)$ for any $\wt p\geq p$ and $\wt r\geq r,$
we  easily get (see Lemma \ref{l:D} in the appendix) that
\begin{enumerate}
\item  $t^\alpha \nabla^2u \in L^r(\R_+;L^p(\R^d))$ with $\alpha=\frac32-\frac{d}{2p}-\frac1r$
if $p>d/3,$
\item $t^\beta \nabla u\in L^{r_2}(\R_+;L^{p_2}(\R^d))$ with $\beta= 1-\frac d{2p_2}-\frac1{r_2}$
if $p_2\geq p,$ $r_2\geq r$  and $p_2>d/2,$
\item   $t^\gamma u\in L^{r_3}(\R_+;L^{p_3}(\R^d))$ with $\gamma=\frac12-\frac d{2p_3}-\frac1{r_3}$
if $p_3\geq p,$ $r_3\geq r$  and $p_3>d.$
 \end{enumerate}
As pointed out in Proposition \ref{p:stokesbis},  those properties
are still true for the free solution to the Stokes system in  the
half-space. Keeping in mind that we want to apply those types of
estimates to System \eqref{eq:velocity}, we see that we need to be
able to handle also the Stokes system with some source term $f$
satisfying $t^\alpha f\in L^r(\R_+;L^p_+).$ Still in the simpler
case of the heat equation:
$$
\pa_t v -\Delta v=f\quad\hbox{in}\quad \R_+\times\R^d\quad\hbox{with }\  t^\alpha f\in L^r(\R_+;L^p_+),
$$
  it has been observed  by the second author and collaborators in \cite{HPZ3} (see also
the Appendix) that if $\alpha r'<1$ then
\begin{enumerate}
\item  $t^\alpha \nabla^2v \in L^r(\R_+;L^p(\R^d)),$\smallbreak
\item $t^\beta \nabla v\in L^{r_2}(\R_+;L^{p_2}(\R^d))$ with $\beta=\alpha+\frac d2\bigl(\frac1p-\frac1{p_2}\bigr)-\frac12+\frac1r-\frac1{r_2}$
if $p_2\geq p,$ $r_2\geq r$ and $\frac dp-\frac d{p_2}<1+\frac2{r_2}-\frac2{r},$\smallbreak
\item   $t^\gamma v\in L^{r_3}(\R_+;L^{p_3}(\R^d))$ with $\gamma=  \alpha+\frac d2\bigl(\frac1p-\frac1{p_3}\bigr)-1+\frac1r-\frac1{r_3} $
if $p_3\geq p,$ $r_3\geq r$  and $p_3>d$ and   $\frac dp-\frac
d{p_3}<2+\frac2{r_3}-\frac2{r}\cdotp$
 \end{enumerate}
In the case we are interested in, owing to the presence of $u\cdot\nabla u$ in   \eqref{eq:velocity} and to H\"older inequality,
the following supplementary relations have to be fulfilled:
\begin{equation}\label{eq:p}
\frac 1p=\frac 1{p_2}+\frac 1{p_3},\quad \frac 1r=\frac 1{r_2}+\frac 1{r_3}\quad\hbox{and}\quad \alpha=\beta+\gamma.
\end{equation}
Under the first two conditions, if se set
\begin{equation}\label{eq:coeff}
\alpha\eqdefa \frac32-\frac{d}{2p}-\frac1r,\quad
\beta\eqdefa 1-\frac d{2p_2}-\frac1{r_2},\quad
\gamma\eqdefa\frac12-\frac d{2p_3}-\frac1{r_3},
 \end{equation}
then  the relationships above between $\alpha,$ $\beta$ and $\gamma$ are satisfied.
Let us emphasize that  if $(p,r)$ with $d/3<p<d$ has been chosen so that  $r>2p/(3p-d)$ (which is equivalent to $0<\alpha<1-1/r$),
 then one may take any $(p_2,r_2)$ such that $p_2\geq p,$ $p_2>d/2,$ $r_2\geq r$ and
 \begin{equation}\label{eq:p2}
 \frac dp-1+\frac2{r}-\frac2{r_2}<\frac d{p_2}<2-\frac2{r_2}\cdotp
 \end{equation}
 The assumption on $(p,r)$ ensures that such a couple exists.
This motivates the following statement:
\begin{thm}\label{th:main2}{\sl
Assume that $r\in(1,\infty)$ and $p\in(d/3,d)$ satisfy
$r>2p/(3p-d).$ Then for any data $u_0\in\dot\cB^{-1+\frac
dp}_{p,r}(\R^d_+)$ and $a_0\in L^\infty_+$ fulfilling the smallness
condition  \beq \label{small3}
\|u_0\|_{\dot{B}^{-1+\f{d}{p}}_{p,r}(\R^d_+)}\leq
c_2\mu\quad\hbox{and}\quad \|a_0\|_{L^\infty_+}\leq c_3 \eeq for
some sufficiently small  positive constants $c_2=c_2(r,d)$ and
$c_3=c_3(r,d),$ System \eqref{INS} has a global solution $(a,u)$
with $\|a(t)\|_{L^\infty_+}=\|a_0\|_{L^\infty_+}$ for all
$t\in\R_+,$ and
\begin{equation}\label{eq:main2}
t^\alpha(\pa_tu,\nabla^2u,\nabla\Pi)\in L^r(\R_+;L^p_+),\quad
t^\beta\nabla u\in L^{r_2}(\R_+;L^{p_2}_+),\quad
t^\gamma u\in L^{r_3}(\R_+;L^{p_3}_+)
\end{equation}
with $\alpha,\beta,\gamma$ defined in \eqref{eq:coeff}, $(p_2,r_2)$
satisfying \eqref{eq:p2}, and $(p_3,r_3)$ defined in \eqref{eq:p}.
\smallbreak Furthermore, for any $1<\sigma<\frac{r}{1+\alpha r},$
the fluctuation $u-u_L$ belongs to
$\cC(\R_+;\dot\cB^{2-\f2{\s}}_{p,\sigma}(\R^d_+))$  where $u_L$ stands for the free solution of the Stokes system with initial data $u_0.$}
\end{thm}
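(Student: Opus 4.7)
\medbreak\noindent\textbf{Proof plan.} My strategy will closely mirror the proof of Theorem \ref{th:main}: construct an approximating sequence of smooth solutions, derive uniform bounds in the weighted spaces dictated by Proposition \ref{p:stokesbis}, and then pass to the limit by the compactness method of Section \ref{s:existence}. Since the relation $-1+d/p = 2-2/r$ no longer holds, the natural solution space is not $X^{p,r}$ but rather the space $Y$ of $(u,\nabla\Pi)$ with
$$
t^\alpha(\p_tu,\nabla^2 u,\nabla\Pi)\in L^r(\R_+;L^p_+),\quad
t^\beta\nabla u\in L^{r_2}(\R_+;L^{p_2}_+),\quad
t^\gamma u\in L^{r_3}(\R_+;L^{p_3}_+),
$$
with $\alpha,\beta,\gamma$ as in \eqref{eq:coeff}. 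The key point justifying this choice is that Proposition \ref{p:stokesbis}, applied with $s=-1+d/p$, provides precisely the maximal regularity estimate on $t^\alpha(\p_t u,\nabla^2 u,\nabla\Pi)$ and the two companion bounds on $t^\beta\nabla u$ and $t^\gamma u$ (the assumption $r>2p/(3p-d)$ ensuring $\alpha r'<1$, and \eqref{eq:p2} ensuring the admissibility condition for the auxiliary pair).

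\medbreak\noindent\textbf{Iteration and uniform bounds.} First I would smooth out the data $(a_0,u_0)$ as in Step~1 of Section \ref{s:existence}, and solve System \eqref{eq:scheme1} (or the slightly simpler linearization used around \eqref{eq:scheme1}) for each regularized datum. The existence of the approximate solutions is guaranteed by Theorem \ref{thm:auxiliary} once we fix a suitable $\tilde p$. The crucial step is to derive \emph{a priori} bounds independent of the regularization, in the norm
$$
\cN(u,\nabla\Pi)\eqdefa \|t^\alpha(\p_tu,\nabla^2u,\nabla\Pi)\|_{L^r(L^p_+)}
+\|t^\beta\nabla u\|_{L^{r_2}(L^{p_2}_+)}
+\|t^\gamma u\|_{L^{r_3}(L^{p_3}_+)}.
$$
Applying Proposition \ref{p:stokesbis} to the velocity equation \eqref{eq:velocity} and using the preservation of $\|a(t)\|_{L^\infty_+}=\|a_0\|_{L^\infty_+}$, the term $a(\mu\Delta u-\nabla\Pi)$ contributes at most $C\|a_0\|_{L^\infty_+}\mu\,\cN(u,\nabla\Pi)$, which is absorbed by the left-hand side provided $c_3$ is small enough. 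For the transport term, Hölder's inequality together with \eqref{eq:p} yields
$$
\|t^\alpha u\cdot\nabla u\|_{L^r(L^p_+)}
\leq \|t^\gamma u\|_{L^{r_3}(L^{p_3}_+)}\,\|t^\beta\nabla u\|_{L^{r_2}(L^{p_2}_+)}
\leq \bigl(\cN(u,\nabla\Pi)\bigr)^2.
$$
A standard fixed-point/induction bootstrap, using \eqref{small3} and the smallness of $c_2$, then provides a uniform bound $\cN(u^n,\nabla\Pi^n)\leq C\mu^{1-1/r}\|u_0\|_{\dot B^{-1+d/p}_{p,r}(\R^d_+)}$.

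\medbreak\noindent\textbf{Compactness and passage to the limit.} This is the step that most closely parallels Section \ref{s:existence}: the uniform bounds give weak-$\ast$ convergence of $a^n$ in $L^\infty$, weak convergence of $(\p_t u^n,\nabla^2 u^n,\nabla\Pi^n)$ in $L^r_{loc}$ weighted spaces, and local strong convergence of $u^n$ and $\nabla u^n$ via Aubin--Lions (from the bound on $t^\alpha\p_t u^n$ and interior Besov regularity). The renormalization argument of DiPerna--Lions, applied exactly as in \eqref{3.6c}--\eqref{3.6f}, upgrades the weak limit of $a^n$ to strong convergence in $L^m_{loc}$, enabling passage to the limit in the product $a^n(\mu\Delta u^n-\nabla\Pi^n)$. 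The main obstacle here --- and indeed the most delicate point of the whole argument --- is to make sure that all products are well-defined near $t=0$ despite the time weights being singular there: one must either treat $[0,\tau]$ and $[\tau,\infty)$ separately (using the smoothness of the approximate data) or work with equivalent truncated weights.

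\medbreak\noindent\textbf{Time continuity of the fluctuation.} For the final assertion, set $v\eqdefa u-u_L$, which solves the Stokes system with source $-u\cdot\nabla u + a(\mu\Delta u-\nabla\Pi)$ and zero initial data. The condition $1<\sigma<r/(1+\alpha r)$ is equivalent to $\alpha + 1/r<1/\sigma$, so that $t^\alpha\in L^{\sigma'}_{loc}(\R_+)$ with some room to spare. Hölder's inequality in time then upgrades the source term from the weighted $L^r$ bound to an $L^\sigma_{loc}(\R_+;L^p_+)$ bound on any finite interval. Applying Proposition \ref{p:stokes} to $v$ with exponent $\sigma$ in place of $r$ (and $u_0=0$) yields $v\in\cC(\R_+;\dot\cB^{2-2/\sigma}_{p,\sigma}(\R^d_+))$, as desired. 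I expect the estimate on the density-velocity coupling term $a(\mu\Delta u-\nabla\Pi)$ to be handled by the same Hölder trick, since $\|a\|_{L^\infty}$ is preserved along the flow.
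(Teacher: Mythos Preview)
Your proposal is correct and follows essentially the same route as the paper. The paper's resolution of what you identify as the main obstacle (the singularity of the weights at $t=0$) is exactly the H\"older-in-time trick you already use for the fluctuation: writing $\nabla^2 u^n = t^{-\alpha}(t^\alpha\nabla^2 u^n)$ and similarly for $\nabla u^n$ and $u^n$ converts the weighted bounds into unweighted $L^\sigma_{loc}$, $L^{\sigma_2}_{loc}$, $L^{\sigma_3}_{loc}$ bounds (with $\sigma<r/(1+\alpha r)$ and $1/\sigma_2+1/\sigma_3<1$), after which the compactness and DiPerna--Lions arguments of Section~\ref{s:existence} apply verbatim; one minor simplification in the paper is that it invokes Theorem~\ref{th:main} (rather than Theorem~\ref{thm:auxiliary}) to produce the approximate solutions, so only $u_0$ needs to be regularized and $a_0$ is kept as is.
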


\begin{rmk}
By following the proof of Theorem \ref{th:main} in this paper and
that of Theorem 1.1 in \cite{HPZ3}, we can also extend Theorem
\ref{th:main3} to the case where the (weaker)  anisotropic smallness condition \eqref{small1} is fulfilled. For a
concise presentation, we shall not pursue this direction here.
\end{rmk}

\begin{proof}
We may assume that  $\mu=1,$ as the usual rescaling gives the result in the general case.

We smooth out the initial velocity $u_0$ into a sequence
$(u_0^n)_{n\in\N}$ satisfying the assumptions of theorem
\ref{th:main}: we take $p_0$ so that $-1+\frac d{p_0}=2-\frac2r$ and
require $u_{0,n}$ to be in $\dot\cB^{-1+\frac
d{p_0}}_{p_0,r}(\R^d_+)\cap \dot\cB^{-1+\frac
d{p_0}}_{p,r}(\R^d_+),$ and to converge to $u_0$ in $\dot
B^{-1+\frac d{p_0}}_{p,r}(\R^d_+).$ To construct $u_{0,n},$ one may
first consider the zero extension $\tilde u_0=e_0(u_0)$ of $u_0$ to
$\R^d,$ then approximate it by compactly supported divergence free
vector-fields by means of the stream function.

\subsubsection*{Step 1. Uniform estimates}
The corresponding solution $(a^n,u^n,\nabla\Pi^n)$  satisfies in particular
$$
\p_t u^n,\nabla^2u^n,\nabla\Pi^n\in L^r(\R_+;L^{p_0}_+\cap L^p_+).
$$
Hence H\"older inequality ensures that $t^\alpha(\p_t u^n,\nabla^2u^n,\nabla\Pi^n)\in L^r(0,T;L^p_+)$ for all $T>0.$
A similar argument ensures that $t^\beta\nabla u^n$ and $t^\gamma u^n$ are in $L^{r_2}(0,T;L^{p_2}_+)$
and  $L^{r_3}(0,T;L^{p_3}_+),$ respectively, for all $T>0.$
Now, because $(a^n,u^n,\nabla\Pi^n)$ satisfies
 $$\p_tu^n-\D u^n+\na\Pi^n=F^n \eqdefa a^n(\D u^n-\na\Pi^n)-u^n\cdot\na u^n,
$$ Proposition \ref{p:stokesbis} implies that
$$
\begin{aligned}
Z_n(t)\eqdefa&\|t^\gamma u^n\|_{L_t^{r_3}(L^{p_3}_+)}+\|t^\beta\nabla u^n\|_{L_t^{r_2}(L^{p_2}_+)}+
\|t^\alpha(\p_tu^n,\nabla^2u^n,\nabla\Pi^n)\|_{L^r_t(L^{p}_+)}\\
&\hspace{7cm}\leq C\bigl(\|u_0^n\|_{\dot B^{-1+\frac dp}_{p,r}(\R^d_+)}+\|t^\alpha F^n\|_{L^r_t(L^{p}_+)}\bigr).
\end{aligned}
$$
Taking advantage of H\"older inequality, of the relationship between $(r_2,p_2)$ and $(r_3,p_3),$ and of the conservation of
$\|a^n(t)\|_{L^\infty_+},$  we
 may write
$$
\|t^\alpha F^n\|_{L^r_t(L^{p}_+)}\leq
\|a_0\|_{L^\infty_+}\bigl(\|t^\alpha\D u^n\|_{L^r_t(L^{p}_+)}+\|t^\alpha\na\Pi^n\|_{L^r_t(L^{p}_+)}\bigr)
+\|t^\gamma u^n\|_{L^{r_3}_t(L^{p_3}_+)}\|t^\beta \nabla u^n\|_{L^{r_2}_t(L^{p_2}_+)}.
$$
 Therefore taking $c_2, c_3$ small enough in \eqref{small3}, we get that \beno Z_n(t)\leq C\bigl(\|u_0\|_{\dot
B^{-1+\f dp}_{p,r}(\R^d_+)}+Z_n^2(t)\bigr), \eeno so that as long as
$\|u_0\|_{\dot B^{-1+\f dp}_{p,r}(\R^d_+)}\leq \frac{1}{4C^2},$ we
have \beno Z_n(t)\leq 2C\|u_0\|_{\dot B^{-1+\frac
dp}_{p,r}(\R^d_+)}.\eeno

\subsubsection*{Step 2. Convergence}

{}From Step 1, we know that sequence $(a^n,u^n,\nabla\Pi^n)_{n\in\N}$ is bounded in the space defined in \eqref{eq:main2}.
In order to complete the proof of existence, we have to establish
convergence, up to extraction, to a solution $(a,u,\nabla\Pi)$
of \eqref{INS} in the desired functional space.
For that, we first notice that
$$
\Delta u^n=t^{-\alpha}\:(t^\alpha\Delta u^n).
$$
H\"older inequality guarantees that $(\Delta u^n)_{n\in\N}$ is bounded in $L^\sigma_{loc}(\R_+;L^p_+)$
for any $\sigma<r/(1+\alpha r).$  Note that  because $\alpha+1/r<1,$ one may take $\sigma>1.$
Similarly, we have $(\p_tu^n)_{n\in\N}$ and $(\nabla\Pi^n)_{n\in\N}$ bounded in $L^\sigma_{loc}(\R_+;L^p_+).$
Therefore, setting $\wt u^n\eqdefa u^n-u^n_L$ where $u^n_L$ stands for the free solution to the Stokes
system with initial data $u_0^n,$
we conclude that $(\wt u^n)_{n\in\N}$ is bounded in $\cC(\R_+;\dot
B^{2-\frac2\s}_{p,\sigma}(\R^d_+)).$

Next, writing  $\nabla u^n=t^{-\beta}\:(t^\beta\nabla u^n)$ and
$u^n=t^{-\gamma}\:(t^\gamma u^n),$
we get $(\nabla  u^n)_{n\in\N}$ and $(u^n)_{n\in\N}$ bounded in $L^{\sigma_2}(\R_+;L^{p_2}_+)$ and
$L^{\sigma_3}(\R_+;L^{p_3}_+),$ respectively.
As we may choose $\sigma_2$ and $\sigma_3$ as close to (but smaller than) $r_2/(1+\beta r_2)$ and
$r_3/(1+\gamma r_3)$ as we want,
one may ensure that
\begin{equation}\label{eq:condsigma}
\f1{\sigma_2}+\f1{\sigma_3}<1.
\end{equation}
Now, combining with the boundedness of $(\D\wt u^n)_{n\in\N}$ in
$L^\sigma_{loc}(\R_+;L^p_+)$ and using Arzela-Ascoli theorem, we
conclude that, up to extraction, sequence
$(a^n,u^n,\nabla\Pi^n)_{n\in\N}$ converges weakly to some triple
$(a,u,\nabla\Pi)$
 with $a\in L^\infty(\R_+\times\R^d_+),$
$$t^\alpha(\p_tu,\nabla^2u,\nabla\Pi)\in L^r(\R_+;L^p_+),\quad
t^\beta\nabla u\in L^{r_2}(\R_+;L^{p_2}_+),\quad
t^\gamma u\in L^{r_3}(\R_+;L^{p_3}_+).
$$
 More precisely, we have
 $$
\begin{aligned}
&a^n \rightharpoonup  a\quad \mbox{weak}\ \ast \ \mbox{in}\
L^\infty(\R_+\times\R^d_+),\\
& \nabla^2 u^n \rightharpoonup \nabla^2 u \quad\mbox{and}\quad \na\Pi^n
\rightharpoonup \na\Pi\quad \mbox{weakly}\ \ \mbox{in}\ L^\sigma(\R_+;L_+^p),
\end{aligned}
$$
with in addition for all small enough $\eta>0,$
$$
\begin{aligned}
&  u^n \rightarrow u \quad \mbox{strongly}\  \ \mbox{in}\
L^{\sigma_3}_{loc}(\R_+; L^{p_3-\eta}_{loc}(\R^d_+)),\\
&  \na u^n \rightarrow \na u \quad \mbox{strongly}\  \ \mbox{in}\
L^{\sigma_2}_{loc}(\R_+; L^{p_2-\eta}_{loc}(\R^d_+)).
\end{aligned}
$$
Because \eqref{eq:condsigma} is satisfied, passing to the limit in System \eqref{INS} follows from the same arguments
as in the proof of Theorem \ref{th:main}.
That the constructed solution  has  all the properties listed in Theorem \ref{th:main2} is left to the reader.  This completes the proof of existence.
\end{proof}

As in Theorem \ref{th:main}, assuming just critical regularity for the velocity does not seem
to be enough to ensure uniqueness.  For sure, as the velocity $u$ does not satisfy $\nabla u\in L^1_{loc}(\R_+;L^\infty_+),$
 we cannot resort to the Lagrangian approach.
This motivates the following statement.
\begin{thm}\label{th:main3}{\sl
In addition to the hypotheses of Theorem \ref{th:main2}, assume that
$u_0$ belongs to $\dot\cB^{-1+\frac dp}_{\wt p,r}(\R^d_+)$ for some finite $\wt p>p.$
Then \eqref{INS} has
a global solution $(a,u,\nabla\Pi)$ fulfilling
the properties of Theorem \ref{th:main2} and, in addition,
$$
t^\alpha(\p_tu,\nabla^2u,\nabla\Pi)\in L^r(\R_+;L^{\wt p}_+),\quad
t^\beta\nabla u\in L^{r_2}(\R_+;L^{\wt p_2}_+),
\quad t^\gamma u\in L^{r_3}(\R_+;L^{\wt p_3}_+)
$$
with $\alpha,$ $\beta$ and $\gamma$ defined as previously in
\eqref{eq:coeff},
\begin{equation}\label{eq:exp1}
\f1{\wt p_2}-\f1{p_2}=\f1{\wt p}-\f1p\quad\hbox{and}\quad
\f1{\wt p_3}-\f1{p_3}=\f1{\wt p}-\f1p,
\end{equation}
whenever $p_2$ and $r_2$ may be  chosen so  that \eqref{eq:p2} is fulfilled and
\begin{equation}\label{eq:exp2}
\frac{2d}p-\frac d{\wt p}-\f d{p_2}\leq 1+\frac2{r_2}-\frac2r\cdotp
\end{equation}
If in addition $\wt p>d$ then there exists some positive $\delta<\alpha$ and $s>r$ so that
 \begin{equation}\label{eq:lip}
 t^\delta\nabla u\in L^s(\R_+;L^\infty_+),
 \end{equation}
 and  the constructed solution is unique in its class of regularity.}
\end{thm}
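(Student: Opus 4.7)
The plan is to mimic the proof of Theorem~\ref{th:main2}, adding estimates on the $\wt p$-scale for the approximating sequence, then obtaining the Lipschitz bound by interpolation, and finally deducing uniqueness from a Lagrangian reformulation.

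First I would regularize $u_0$ into $(u_0^n)$ converging simultaneously in $\dot B^{-1+d/p}_{p,r}(\R^d_+)$ and $\dot B^{-1+d/p}_{\wt p,r}(\R^d_+)$, and chosen so that Theorem~\ref{thm:auxiliary} (or Theorem~\ref{th:main}) provides a global smooth solution $(a^n,u^n,\nabla\Pi^n)$. Theorem~\ref{th:main2} already yields uniform control on the $p$-scale quantity $Z_n$. To obtain the analogous $\wt p$-scale bound, I would apply Proposition~\ref{p:stokesbis} to the Stokes equation satisfied by $u^n$, with forcing $F^n=a^n(\Delta u^n-\nabla\Pi^n)-u^n\cdot\nabla u^n$ and initial datum in $\dot\cB^{-1+d/p}_{\wt p,r}(\R^d_+)$. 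This gives
$$
\wt Z_n(t) := \|t^\gamma u^n\|_{L_t^{r_3}(L^{\wt p_3}_+)}+\|t^\beta\nabla u^n\|_{L_t^{r_2}(L^{\wt p_2}_+)}+\|t^\alpha(\partial_t u^n,\nabla^2 u^n,\nabla\Pi^n)\|_{L_t^r(L^{\wt p}_+)}\leq C\bigl(\|u_0\|_{\dot B^{-1+d/p}_{\wt p,r}}+\|t^\alpha F^n\|_{L_t^r(L^{\wt p}_+)}\bigr).
$$
The $a^n$-term on the right is absorbed using $\|a^n\|_{L^\infty_+}\leq c_3$. The convection term is handled by H\"older in both space (coupling $u^n$ on the $p$-scale with $\nabla u^n$ on the $\wt p$-scale) and time, precisely enabled by the relations \eqref{eq:exp1}--\eqref{eq:exp2}:
$$
\|t^\alpha u^n\cdot\nabla u^n\|_{L_t^r(L^{\wt p}_+)}\lesssim \|t^\gamma u^n\|_{L_t^{r_3}(L^{p_3}_+)}\,\|t^\beta\nabla u^n\|_{L_t^{r_2}(L^{\wt p_2}_+)}\lesssim Z_n(t)\,\wt Z_n(t).
$$
Since $Z_n\lesssim \|u_0\|_{\dot B^{-1+d/p}_{p,r}}$ is small by \eqref{small3}, the bootstrap for $\wt Z_n$ closes linearly. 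Passing to the limit in $n$ then proceeds exactly as in the proof of Theorem~\ref{th:main2}: the additional $\wt p$-scale bounds survive by weak-$*$ compactness and Fatou.

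When $\wt p>d$, I would derive \eqref{eq:lip} by interpolating between the controls of $\nabla u$ in $L^{r_2}_t(L^{\wt p_2}_+)$ (weight $t^\beta$) and of $\nabla^2 u$ in $L^{r}_t(L^{\wt p}_+)$ (weight $t^\alpha$). Extending via $e_a$ and using the Gagliardo--Nirenberg inequality on $\R^d$ (as in Step~4 of the proof of Proposition~\ref{p:stokes}) yields
$$
\|\nabla u\|_{L^\infty_+}\leq C\,\|\nabla u\|_{L^{\wt p_2}_+}^{1-\theta}\,\|\nabla^2 u\|_{L^{\wt p}_+}^{\theta}
$$
for a suitable $\theta\in(0,1)$ since $\wt p>d$. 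Taking the time $L^s$-norm with $1/s=(1-\theta)/r_2+\theta/r$ produces \eqref{eq:lip} with $\delta=(1-\theta)\beta+\theta\alpha<\alpha$. By choosing $\theta$ close enough to $0$ one ensures $\delta s'<1$, so that $\nabla u\in L^1_{loc}(\R_+;L^\infty_+)$.

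Finally, for uniqueness I would follow the Lagrangian strategy of \cite{HPZ3,dm2}. The bound $\nabla u\in L^1_{loc}(\R_+;L^\infty_+)$ ensures that the flow $X$ of $u$ is well defined and bi-Lipschitz, and the homogeneous Dirichlet condition $u|_{\partial\R^d_+}=0$ guarantees that $X_t$ preserves the half-space. In Lagrangian coordinates the density is frozen to $a_0$, and the velocity satisfies a perturbed Stokes system whose nonlinear coefficients are controlled by small quantities involving $\|a_0\|_{L^\infty_+}$ and the norms of $\nabla u$. Two solutions with the regularity of the theorem are compared in the Lagrangian frame via Proposition~\ref{p:stokesbis} applied to the difference, yielding uniqueness by a fixed-point/Gronwall argument. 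The main obstacles I anticipate are (i) checking that the exponent bookkeeping in \eqref{eq:exp1}--\eqref{eq:exp2} is indeed just what is needed to close H\"older on both space and time weights, and (ii) implementing the Lagrangian argument rigorously in the half-space, in particular verifying that the transformed Stokes system with the new variable-coefficient operators still fits the framework of Section~\ref{s:stokes} and that the trace condition is preserved under the change of variables.
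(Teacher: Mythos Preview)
Your proposal is correct and follows essentially the same route as the paper: the $\wt p$-scale a priori bound is obtained exactly as you say (Proposition~\ref{p:stokesbis} plus the H\"older splitting $\frac1{\wt p}=\frac1{\wt p_2}+\frac1{p_3}$, so that the convection term is controlled by $Z\cdot\wt Z$ with $Z$ small), and uniqueness is then handled in Lagrangian coordinates via Proposition~\ref{p:stokesbis} applied to the difference system, precisely as in Section~\ref{s:uniqueness}.

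The only noticeable deviation is your Gagliardo--Nirenberg step for \eqref{eq:lip}: you interpolate $\|\nabla u\|_{L^\infty}$ between $\|\nabla u\|_{L^{\wt p_2}}$ and $\|\nabla^2u\|_{L^{\wt p}}$, whereas the paper interpolates between $\|u\|_{L^{\wt p_3}}$ and $\|\nabla^2u\|_{L^{\wt p}}$. Both are legitimate; note however that in either case $\theta$ is \emph{determined} by the exponents, not ``chosen close to $0$''. Your conclusion $\delta s'<1$ (equivalently $\delta+\frac1s<1$) is nonetheless correct, since $\delta+\frac1s=(1-\theta)\bigl(\beta+\frac1{r_2}\bigr)+\theta\bigl(\alpha+\frac1r\bigr)$ with $\beta+\frac1{r_2}=1-\frac d{2p_2}<1$ and $\alpha+\frac1r<1$. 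To also get the stated property $\delta<\alpha$ with your endpoints you need $\beta<\alpha$, i.e.\ $\gamma>0$; this holds once $(p_2,r_2)$ is chosen with $r_2$ close to $r$, which is admissible under \eqref{eq:p2}.
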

\begin{rmk}{\sl
Despite the appearances, it is always possible to take $\tilde p>d$
in the above statement.
At first sight it seems not obvious because a necessary condition
for having $\tilde p>d$ in  \eqref{eq:exp2} is that $p>\f{dr}{2r-1}.$
However, by embedding, one may always find some $p_1\in(\f{dr}{2r-1},d)$
so that $u_0\in \dot B^{-1+\f d{p_1}}_{p_1,r}(\R^d_+)\cap \dot B^{-1+\f d{p_1}}_{\wt p,r}(\R^d_+)$
and thus replace $p$ by $p_1.$}
\end{rmk}
\begin{proof}
The scheme for proving existence is exactly the same as in the previous statement.
Therefore we remain at the level of a priori estimates.
Let $Z$ and $\wt Z$ be defined on $\R_+$ by
$$\begin{array}{ccc} Z(t)&\eqdefa& \|t^\gamma u\|_{L_t^{r_3}(L^{p_3}_+)}+\|t^\beta\nabla u\|_{L_t^{r_2}(L^{p_2}_+)}+
\|t^\alpha(\p_tu,\nabla^2u,\nabla\Pi)\|_{L^r_t(L^{p}_+)}\\[1ex]
 \wt Z(t)&\eqdefa& \|t^\gamma u\|_{L_t^{r_3}(L^{\wt p_3}_+)}+\|t^\beta\nabla u\|_{L_t^{r_2}(L^{\wt p_2}_+)}+
\|t^\alpha(\p_tu,\nabla^2u,\nabla\Pi)\|_{L^r_t(L^{\wt
p}_+)}.\end{array}
$$
As in  the proof of Theorem \ref{th:main2} and under Condition \eqref{small1}, we have for some $C=C(p,r,d),$
\begin{equation}\label{eq:Z}
 Z(t)\leq C\|u_0\|_{\dot B^{-1+\f dp}_{p,r}(\R^d_+)}.
 \end{equation}
 Next, keeping in mind our assumptions on the Lebesgue exponents $p,$ $p_2,$ $r_2,$ $\wt p$ and $\wt p_2,$
 Proposition \ref{p:stokesbis} ensures that for some constant $C=C(p,\wt p,r,d),$
 $$
 \wt Z(t)\leq C\Bigl(\|u_0\|_{\dot B^{-1+\f dp}_{\wt p,r}(\R^d_+)}
 +\|t^\alpha\,a(\Delta u-\nabla\Pi)\|_{L_t^r(L^{\wt p}_+)}+\|t^\alpha\,u\cdot\nabla u\|_{L_t^r(L^{\wt p}_+)}\Bigr).
   $$
   Because $\|a(t)\|_{L^\infty_+}$ is constant during the evolution,
   under Condition \eqref{small1}, the first term of the right-hand side may be absorbed by
   the left-hand side. As for the last term, we use H\"older inequality and the fact that
   $$
   \alpha=\beta+\gamma,\quad \frac1r=\frac1{r_2}+\frac1{r_3}\ \hbox{ and }\
   \frac1{\wt p}=\f1{\wt p_2}+\f1{p_3}\cdotp
   $$
   We thus end up with
   $$
   \wt Z(t)\leq C\Bigl(\|u_0\|_{\dot B^{-1+\f dp}_{\wt p,r}(\R^d_+)}
 +\|t^\beta\nabla u\|_{L_t^{r_2}(L^{\wt p_2}_+)}\|t^\gamma u\|_{L_t^{r_3}(L^{p_3}_+)}\Bigr),
 $$
 whence,
 if $c$ is small enough in \eqref{small1},
   \begin{equation}\label{eq:tildeZ}
 \wt Z(t)\leq C\|u_0\|_{\dot B^{-1+\f dp}_{\wt p,r}(\R^d_+)}.
 \end{equation}
In order to prove \eqref{eq:lip}, we first have to check whether one may take $\wt p>d,$
knowing that Conditions \eqref{eq:p2},  \eqref{eq:exp1} and \eqref{eq:exp2} have to be fulfilled.
This is in fact equivalent to $p>\f{dr}{2r-1}.$
Assuming from now on that this condition is fulfilled, and taking $\wt p>d,$ we may use the
following Gagliardo-Nirenberg inequality for all functions\footnote{As now usual,
this inequality may be deduced from the classical one on $\R^d,$ taking advantage of the
antisymmetric extension operator.}
of $L^{\wt p_3}_+\cap W^{1,\wt p}_0(\R^d_+)\cap  W^{2,\wt p}(\R^d_+)$:
\begin{equation}\label{eq:GN3}
\|\nabla u\|_{L^\infty_+}\leq C\|\nabla^2u\|_{L_+^{\wt p}}^\theta\|u\|_{L_+^{\wt p_3}}^{1-\theta}\quad\hbox{ with}\quad
\theta\eqdefa\f{1+\f d{\wt p}-\f d{p_2}}{2-\f d{p_2}}\cdotp
\end{equation}
Then, using H\"older inequality and \eqref{eq:tildeZ}, we readily get
$$
\|t^\delta \nabla u\|_{L_t^s(L^\infty_+)}\leq
C\|t^\alpha\nabla^2u\|_{L_t^r(L_+^{\wt p})}^\theta\|t^\gamma
u\|_{L_t^{r_3}(L_+^{\wt p_3})}^{1-\theta} \leq C'\|u_0\|_{\dot
B^{-1+\f dp}_{\wt p,r}(\R^d_+)}<\infty
$$
with
\begin{equation}\label{eq:s}
\delta\eqdefa \theta\alpha+(1-\theta)\gamma\ \hbox{ and }\
\f1s=\f\theta{r}+\f{1-\theta}{r_3}\cdotp
\end{equation}
Because $r_3>r$ and $\gamma<\alpha,$ it is obvious that  $s>r$ and $\delta<\alpha.$
This completes the proof of the second part of the statement.
Proving uniqueness is postponed to the next section.
\end{proof}

\setcounter{equation}{0}

\section{Uniqueness}\label{s:uniqueness}

This section is devoted to proving the uniqueness parts of Theorems \ref{th:main} and \ref{th:main3}.
As in \cite{dm, HPZ3},   it strongly relies on the fact that
for smooth enough solutions, one may use  the \emph{Lagrangian formulation} of \eqref{INS},
which turns out to be equivalent to \eqref{INS}.

\subsection{Lagrangian coordinates}

Before going into the detailed proof of uniqueness, we here  recall some basic facts
concerning Lagrangian coordinates.
Throughout, we are given some smooth enough
solution $(a,u,\nabla\Pi)$ to \eqref{INS}    (typically we assume that  $(u,\nabla\Pi)\in X_T^{p,r}\cap
X_T^{\wt p,r}$ with $(p,r)$ and $\wt p>d$  as in the statement
of Theorem \ref{th:main}).
Then we set $$
b(t,y)\eqdefa a(t, X(t,y)), \quad v(t,y)\eqdefa u(t,X(t,y))\quad\mbox{and}\quad P(t,y)\eqdefa \Pi(t, X(t,y))
$$
where, for any
$y\in\R^d,$  $X(\cdot,y)$ stands for the solution to the following ordinary differential equation on $[0,T]$:
 \beq\label{ode} \f{dX(t,y)}{dt}= u(t,
X(t,y)), \qquad X(t,y)|_{t=0}=y.
\eeq
Therefore we have
the following relation between the \emph{Eulerian coordinates} $x$ and the
\emph{Lagrangian coordinates} $y$:
\beq\label{ode1} x=X(t,y)=y+\int_0^tv(\tau, y)\,d\tau. \eeq
Let $Y(t,\cdot)$ be the
inverse mapping of $X(t,\cdot)$, then $D_xY(t,x)=(D_yX(t,y))^{-1}$
with $x=X(t,y).$  Furthermore, if
  \beq\label{lipassume} \int_0^T\|\na
v(t)\|_{L^\infty}\,dt<1\eeq
then one may write
\beq\label{h.12ad} D_xY
=(\Id+(D_yX-\Id))^{-1}=\sum_{k=0}^{\infty}(-1)^k\Bigl(\int_0^tD_yv(\tau,y)\,d\tau\Bigr)^k.\eeq
Setting $A(t,y)\eqdefa (D_yX(t,y))^{-1}=D_x Y(t,x)$ for $x=X(t,y),$
one may prove (see the Appendix of \cite{dm}) that
\beq\label{h.12}
 \nabla_x u(t,x)={}^T\!A(t,y)\nabla_y v(t,y)\quad\mbox{ and}\quad
\dive_x u(t,x)=\dive_y(A(t,y) v(t,y)). \eeq By the chain rule, we
also have\footnote{Here and in what follows, we denote by $
{}^T\!A$ the transpose matrix of $A,$  $\bigl(\na
u\bigr)_{i,j}=\bigl(\p_iu^j\bigr)_{1\leq i,j\leq d},$ and $D
u={}^T\na u=\bigl(\p_ju^i\bigr)_{1\leq i, j\leq d}.$}
\ben\label{eq:A-div} \dive_y\big(Av\big)={}^T\!A:\na_yv=D_yv: A={\rm
Tr}\,(D_yv\cdot A).\een As in \cite{dm2}, we denote
$$
\na_u\eqdefa {}^T\!A\cdot\na_y,\quad
\dive_u\eqdefa\dive_y(A\cdot)\quad\mbox{and}\quad
\D_u\eqdefa\dive_u\na_u.
$$
Note that for any $t>0,$  the  solution of \eqref{INS}
obtained in  Theorem \ref{th:main} satisfies the smoothness assumption
of Proposition 2 in \cite{dm2}, so that $(b, v, \na P)$ satisfies  \begin{equation}\label{INSL}
 \quad\left\{\begin{array}{lll}
\displaystyle b_t=0&\quad\mbox{in}\quad& \R_+\times\R^d_+,\\
\displaystyle \pa_t v -(1+b)(\Delta_u v- \grad_u P)=0&\quad\mbox{in}\quad& \R_+\times\R^d_+, \\
\displaystyle \dive_u  v = 0&\quad\mbox{in}\quad& \R_+\times\R^d_+, \\
\displaystyle v=0&\quad\mbox{on}\quad& \R_+\times\p\R^d_+,\\
\displaystyle (b,v)|_{t=0}=(a_0, u_0)&\quad\mbox{in}\quad& \R^d_+,
\end{array}\right.
\end{equation}
which is  the Lagrangian formulation of \eqref{INS}.


\subsection{Proving  uniqueness : the  ``smooth case''}

Here we prove   the uniqueness part of Theorem
\ref{th:main}.
Thanks to the rescaling \eqref{eq:change}, one may assume with no loss of generality that $\mu=1.$
 Let $(a_i,u_i,\Pi_i), i=1,2,$ be two
solutions of \eqref{INS} satisfying  \eqref{thm1aa}, \eqref{thm1ab} and \eqref{thm1b}.

For $i=1,2,$ let $X_i$ be the flow of $u_i$ (defined in \eqref{ode1}) and denote by $(v_i, P_i)$
the corresponding velocity and pressure in Lagrangian coordinates.
Let \beno \dv\eqdefa v_2-v_1,\quad
\dP=P_2-P_1.\eeno Observing that $b\equiv a_0,$ we see that   $(\dv, \nabla\dP)$ satisfies
\begin{equation}\label{differ}
 \quad\left\{\begin{array}{lll}
\displaystyle \pa_t \dv -\Delta\dv+ \grad \dP=a_0(\Delta\dv- \grad \dP)+\df_1 +\df_2\eqdefa \dF&\quad\mbox{in}\quad& \R_+\times\R^d_+, \\
\displaystyle \dive\, \dv = \dg=\dive \dR&\quad\mbox{in}\quad& \R_+\times\R^d_+,\\
\end{array}\right.
\end{equation}
where \beq\label{source} \begin{aligned} \df_1
&\eqdefa(1+a_0)[(\Id- {}^T\!A_2)\na \dP-\dA \na
P_1]\quad\hbox{with }\ \dA\eqdefa A_2-A_1,\\
\df_2 &\eqdefa(1+a_0)\dive[(A_2 {}^T\!A_2-\Id)\na {\dv}+
(A_2 {}^T\!A_2-
A_1 {}^T\!A_1)\na v_1],\\
\dg &\eqdefa (\Id-{}^T\!A_2):\nabla \dv-{}^T\!\dA:\nabla v_1,\\
\dR &\eqdefa(\Id-A_2){\dv}-\dA\: v_1.
\end{aligned}
\eeq As $\gamma\dR^d\equiv0$ and $\dv|_{t=0}=0,$
applying Proposition \ref{p:stokes} with  $p=\f{dr}{3r-2},$   and $(q,m)=(2r,\frac{dr}{2r-1})$ or
$(q,m)=(r,\frac{dr}{2r-2})$ implies that
\begin{multline}\label{eq:etoile}
\|\na\dv\|_{L^{2r}_t(L^{\f{dr}{2r-1}}_+)}+\|\na\dv\|_{L^{r}_t(L^{\f{dr}{2r-2}}_+)}+\|(\p_t\dv,\nabla^2\dv,\nabla\dP)\|_{L^r_t(L^{
p}_+)}\\ \leq C\Bigl(\|a_0(\Delta\dv-\nabla\dP)\|_{L^r_t(L^{
p}_+)}+\|(\df_1,\df_2,\nabla\dg,\p_t\dR)\|_{L^r_t(L^{p}_+)}\Bigr)\cdotp
\end{multline}
Of course, the first term in the right-hand side may be absorbed by the left-hand side if the constant $c_1$
is small enough in \eqref{small1}. So let us now bound the other terms.
 In what follows, we will use repeatedly that  (see
e.g \cite{dm})
 \beq\label{8.10} \dA(t)=\Bigl(\int_0^t
D\dv\,d\tau\Bigr)\Bigl(\sum_{k\geq 1}\sum_{0\leq
j<k}C_1^jC_2^{k-1-j}\Bigr)\quad\mbox{with}\quad
C_i(t)\eqdefa\int_0^tDv_i\,d\tau.\eeq

\subsubsection*{Bounds for $\dg$} The definition of $\dg$ implies that
$$
\displaylines{\quad \|\na\dg \|_{L^{r}_t(L^{ p}_+)}\leq \|\na
A_2\otimes \nabla\dv\|_{L^{r}_t(L^{
p}_+)}+\|(\Id-A_2)\otimes\na^2\dv\|_{L^{r}_t(L^{
p}_+)}\hfill\cr\hfill +\|\na \dA\otimes \nabla v_1\|_{L^{r}_t(L^{
p}_+)}+\|\dA\otimes\na^2v_1\|_{L^{r}_t(L^{p}_+)}.\quad}
$$
Therefore using H\"older inequality,  \eqref{lipassume} and  \eqref{8.10}, we get
$$
\displaylines{\quad
\|\na\dg \|_{L^{r}_t(L^{p}_+)}\leq C\Bigl(   \|\nabla
A_2\|_{L_t^{\infty}(L^{d}_+)}\|\nabla\dv\|_{L^r(L^{\f{dr}{2r-2}}_+)}+
\|(\Id-A_2)\|_{L^\infty_t(L^\infty_+)}\|\nabla^2\dv\|_{L^r_t(L^{
p}_+)}
\hfill\cr\hfill+\|\na\dA\|_{L^\infty_t(L^{p}_+)}\|\nabla v_1\|_{L^r_t(L^\infty_+)}
+\|\dA\|_{L^\infty_t(L^{\f{dr}{2r-2}}_+)}\|\nabla^2v_1\|_{L^r_t(L^{d}_+)}\Bigr)\cdotp\quad}
$$
Remark that  for $\th$ being determined by
$\f1d=\f\th{p}+\f{1-\th}{\wt p},$ \beno
\begin{aligned}
&\|\nabla A_2\|_{L_t^{\infty}(L^{d}_+)}\leq C
\int_0^t\|\na^2v_2\|_{L^d_+}\,dt'\leq Ct^{1-\f1{r}}\|\na^2v_2\|_{L^r_t(L^p_+)}^\th
\|\na^2v_2\|_{L^r_t(L^{\wt p}_+)}^{1-\th},\\
& \|(\Id-A_2)\|_{L^\infty_t(L^\infty_+)}\leq C t^{1-\f1q}\|\na
v_2\|_{L^q_t(L^\infty_+)},\\
&\|\nabla\dA\|_{L_t^\infty(L^p_+)}\leq C t^{1-\f1r}\|\nabla^2\dv\|_{L_t^r(L^p_+)},\\
&\|\dA\|_{L^\infty_t(L^{\f{dr}{2r-2}}_+)}\leq C t^{1-\f1r}\|\na\dv\|_{L^r_t(L^{\f{dr}{2r-2}}_+)},\\
&\|\nabla^2v_1\|_{L_t^r(L^d_+)}\leq C \|\na^2v_1\|_{L^r_t(L^p_+)}^\th
\|\na^2v_1\|_{L^r_t(L^{\wt p}_+)}^{1-\th}.
\end{aligned}
\eeno Hence we obtain \beno \|\na\dg \|_{L^{r}_t(L^{ p}_+)}\leq
\eta_1(t)\bigl(\|\na\d v\|_{L^r_t(L^{\f{dr}{2r-2}}_+)}+\|\na^2\d
v\|_{L^r_t(L^p_+)}\bigr), \eeno with $\eta_1(t)\to 0$ as $t$ goes to
$0.$

 \subsubsection*{Bounds for $\p_t\dR$}
 First, we see that
 $$
 \|\p_t[(\Id\!-\!A_2)\dv]\|_{L^{r}_t(L^{p}_+)} \leq C
\bigl(\|\na v_2\otimes \dv\|_{L^{r}_t(L^{p}_+)}+\|(\Id-A_2)\p_t\dv\|_{L^{r}_t(L^{
p}_+)}\bigr).
$$
It is easy to check that \beno \begin{aligned} \|\na v_2\otimes\dv\|_{L^{r}_t(L^{p}_+)}
\leq & \,C\|\na
v_2\|_{L^{2r}_t(L^{\f{dr}{2r-1}}_+)}\|\dv
\|_{L^{2r}_t(L^{\f{dr}{r-1}}_+)}\\
\leq & \,C\|\na v_2\|_{L^{2r}_t(L^{\f{dr}{2r-1}}_+)}\|\na\dv
\|_{L^{2r}_t(L^{\f{dr}{2r-1}}_+)},
\end{aligned}
\eeno and \beno
\begin{aligned}
\|(\Id-A_2)\p_t\dv\|_{L^{r}_t(L^{ p}_+)}\leq
&\,C\|\Id-A_2\|_{L^\infty_t(L^\infty_+)}\|\p_t\dv\|_{L^r_t(L^p_+)}\\
\leq & \,Ct^{1-\f1q}\|\na v_2\|_{L^q_t(L^\infty_+)}\|\p_t\dv\|_{L^r_t(L^p_+)},
\end{aligned}
\eeno which gives rise to \beno
\|\p_t[(\Id\!-\!A_2)\dv]\|_{L^{r}_t(L^{p}_+)} \leq
\eta_2(t)\bigl(\|\na\dv \|_{L^{2r}_t(L^{\f{dr}{2r-1}}_+)}+\|\p_t\dv\|_{L^r_t(L^p_+)}\bigr)
\quad\hbox{with}\quad\lim_{t\to0}\eta_2(t)=0. \eeno
On the other hand,  thanks to \eqref{8.10}, we have
$$\displaylines{
\|\p_t[\dA\, v_1]\|_{L^{r}_t(L^{ p}_+)}\leq C\biggl( \|v_1\otimes\na
\dv\|_{L^{r}_t(L^{p}_+)}+\|\dA
\,\p_tv_1\|_{L^{r}_t(L^{ p}_+)}
+\Bigl\|\: \int_0^t|\na \dv|\,dt' \big|\na v_{1,2} \big|
|v_1|\Bigr\|_{L^{r}_t(L^{ p}_+)}\biggr)}
$$
where $v_{1,2}$ designates components of $v_1$ or $v_2.$
\medbreak
Applying H\"older and Sobolev inequalities gives \beno \begin{aligned}
\|v_1\otimes\na\dv\|_{L^{r}_t(L^{ p}_+)}\leq&
\,C\|v_1\|_{L^{2r}_t(L^{\f{dr}{r-1}}_+)}
\|\na \dv\|_{L^{2r}_t(L^{\f{dr}{2r-1}}_+)}\\
&\leq C \|\na v_1\|_{L^{2r}_t(L^{\f{dr}{2r-1}}_+)} \|\na
\dv\|_{L^{2r}_t(L^{\f{dr}{2r-1}}_+)}.\end{aligned} \eeno
Following the computations for bounding $\dg,$ we also have
 \beno
\begin{aligned}
\|\dA\,\p_tv_1\|_{L^{r}_t(L^{p}_+)}\leq &
\,C\|\dA\|_{L^\infty_t(L^{\f{dr}{2r-2}}_+)}\|\p_tv_1\|_{L^r_t(L^{d}_+)}\\
\leq &
\,Ct^{1-\f1r}\|\p_tv_1\|_{L^r_t(L^{p}_+)}^\th\|\p_tv_1\|_{L^r_t(L^{\wt
p}_+)}^{1-\th}\|\na\dv\|_{L^r_t(L^{\f{dr}{2r-2}}_+)},
\end{aligned}
\eeno and, for $\alpha$ so that $\f1\alpha+\f{r-1}{dr}=\f1{\wt
p},$ one has \beno
\begin{aligned}
\Bigl\|\int_0^t|\na \d v|\,dt'\bigl|\na v_{1,2}\bigr|
|v_1|\Bigr\|_{L^{r}_t(L^{p}_+)}
&\leq C\|\na\dv\|_{L^1_t(L^{{\f{dr}{2r-2}}}_+)}\|\na v_{1,2}\otimes v_1\|_{L^{r}_t(L^{d}_+)}\\
&\leq C t^{\f1{r'}}
\|\na\dv\|_{L^r_t(L^{{\f{dr}{2r-2}}}_+)}\bigl(\|\na
v_{1,2}\|_{L_t^{2r}(L^{\f{dr}{2r-1}}_+)}
\|v_1\|_{L^{2r}_t(L^{\f{dr}{r-1}}_+)}\bigr)^\theta\\
&\qquad\times\bigl(\|\na v_{1,2}\|_{L_t^{2r}(L^{\al}_+)}
\|v_1\|_{L^{2r}_t(L^{\f{dr}{d-1}}_+)}\bigr)^{1-\theta}.
\end{aligned}
\eeno
As a consequence, we obtain \beno \|\p_t[\dA\, v_1]\|_{L^{r}_t(L^{
p}_+)}\leq \eta_3(t)\bigl(\|\na\dv\|_{L^{2r}_t(L^{\f{dr}{2r-1}}_+)}+\|\na\dv\|_{L^r_t(L^{\f{dr}{2r-2}}_+)}\bigr)\quad\hbox{with}\quad\lim_{t\to0}\eta_3(t)=0. \eeno

\subsubsection*{Bounds for $\df_1$}

We notice that
$$
\df_1=(1+a_0)\bigl((\Id-{}^T\!A_2)\nabla\dP-{}^T\!\dA\nabla P_1\bigr).
$$
Hence, thanks to \eqref{thm1aa}, \eqref{thm1ab}  and \eqref{thm1b}, we have
 \beno
\begin{aligned}
\|\df_1\|_{L^{r}_t(L^{ p}_+)}&\leq C \bigl(\|{}^T\!(\Id-A_2)\na\dP\|_{L^{r}_t(L^{ p}_+)}+\|{}^T\!\dA\na P_1\|_{L^{r}_t(L^{ p}_+)}\bigr)\\
&\leq C \bigl(\|\na v_2\|_{L^1_t(L^\infty_+)}\|\na\dP\|_{L^{r}_t(L^{ p}_+)}+\|\na\dv\|_{L^1_t(L^{\f{dr}{2r-2}}_+)}\|\na P_1\|_{L^{r}_t(L^{d}_+)}\bigr)\\
&\leq C \Bigl(t^{\f1{q'}}\|\nabla
v_2\|_{L_t^q(L^\infty_+)}\|\na\dP\|_{L^{r}_t(L^{
p}_+)}\!+\!t^{\f1{r'}}\|\nabla P_1\|_{L_t^r(L^{ p}_+\cap L^{\wt p}_+)}
\|\na\dv\|_{L^{r}_t(L^{{\f{dr}{2r-2}}}_+)}\Bigr)
\end{aligned}
\eeno so that \beno \|\df_1\|_{L^{r}_t(L^{ p}_+)}\leq
\eta_4(t)\bigl( \|\na\dP\|_{L^{r}_t(L^{
p}_+)}+\|\na\dv\|_{L^{r}_t(L^{{\f{dr}{2r-2}}}_+)}\bigr)\quad\hbox{with}\quad\lim_{t\to0}\eta_4(t)=0. \eeno

\subsubsection*{Bounds for $\df_2$}

We may write
$$
\|\df_2\|_{L^r_t(L^{p}_+)}\leq C\Bigl(
\|\dive((A_2{}^T\!A_2-\Id)\nabla\dv)\|_{L^r_t(L^{ p}_+)}
+\|\dive\bigl((A_2{}^T\!A_2-A_1{}^T\!A_1)\nabla
v_1\bigr)\|_{L^r_t(L^{ p}_+)}\Bigr)\cdotp
$$
Therefore H\"older inequality and \eqref{8.10} imply that
$$
\displaylines{
\|\df_2\|_{L^r_t(L^{p}_+)}\leq
C\Bigl(\|\nabla(A_2{}^T\!A_2)\|_{L^\infty_t(L^{d}_+)}
\|\nabla\dv\|_{L^r_t(L^{\f{dr}{2r-2}}_+)}+\|(A_2{}^T\!A_2-\Id)\|_{L^\infty_t(L^\infty_+)}\|\nabla^2\dv\|_{L^r_t(L^{
p}_+)}\hfill\cr\hfill+\|\nabla
v_1\|_{L^r_t(L^\infty_+)}\|\nabla(A_2{}^T\!A_2\!-\!A_1{}^T\!A_1)\|_{L_t^\infty(L^{p}_+)}
+\|\nabla^2v_1\|_{L_t^r(L^{d}_+)}
\|A_2{}^T\!A_2\!-\!A_1{}^T\!A_1\|_{L_t^\infty(L^{\f{dr}{2r-2}}_+)}\Bigr),}
$$
which leads to \beno
\begin{aligned}
\|\df_2\|_{L^r_t(L^{p}_+)}\leq &\,
C\Bigl(t^{\f1{r'}}(\|\nabla^2v_1\|_{L_t^r(L^{d}_+)}+\|\na^2v_2\|_{L^r_t(L^d_+)}\bigr)+t^{\f1{q'}}\bigl(\|\na v_1\|_{L^q_t(L^\infty_+)}\\
& \hspace{2cm}+\|\na
v_2\|_{L^q_t(L^\infty_+)}\bigr)\Bigr)\bigl(\|\nabla\dv\|_{L^r_t(L^{\f{dr}{2r-2}}_+)}+\|\na^2\dv\|_{L^{r}_t(L^{p}_+)}\bigr)\\
\leq&\,\eta_5(t)\bigl(\|\nabla\dv\|_{L^r_t(L^{\f{dr}{2r-2}}_+)}+\|\na^2\dv\|_{L^{r}_t(L^{p}_+)}\bigr)\quad\hbox{with}\quad\lim_{t\to0}\eta_5(t)=0.
\end{aligned}
\eeno
Therefore, plugging all the above inequalities in \eqref{eq:etoile}, one may  conclude that
\begin{multline*}
 \|\na\dv\|_{L^{2r}_t(L^{\f{dr}{2r-1}}_+)}+\|\nabla\dv\|_{L^r_t(L^{\f{dr}{2r-2}}_+)}+\|(\p_t\dv,\nabla^2\dv,\nabla\dP)\|_{L^r_t(L^{
p}_+)}\\ \leq \eta(t)\bigl(\|\na\dv\|_{L^{2r}_t(L^{\f{dr}{2r-1}}_+)}+\|\nabla\dv\|_{L^r_t(L^{\f{dr}{2r-2}}_+)}+\|(\p_t\dv,\nabla^2\dv,\nabla\dP)\|_{L^r_t(L^{
p}_+)}\bigr)
\end{multline*}
where $\eta(t)=\sum_{i=1}^5\eta_i(t),$ and thus goes to zero as $t\to
0.$  This yields uniqueness on a small time interval. Then a
standard continuation argument yields uniqueness on the whole
interval $[0,T].$ The proof of Theorem \ref{th:main} is now complete.


\subsection{Proving  uniqueness : the  ``rough case''}

We now assume that we are given two solutions
 $(a_i,u_i,\Pi_i), i=1,2,$ satisfying the properties of  Theorem
\ref{th:main3} with $\wt p>d.$ The difference $(\dv,\nabla\dP)$
between the two solutions in Lagrangian coordinates still satisfies System \eqref{differ}.
In order to prove uniqueness, we shall derive suitable bounds for the following quantity:
$$
\dZ(T)\eqdefa \|t^\alpha(\p_t\dv,\nabla^2\dv,\nabla\dP)\|_{L^r_T(L^p_+)}
+\|t^\beta\nabla\dv\|_{L^{r_2}_T(L^{p_2}_+)}+\|t^\gamma\dv\|_{L^{r_3}_T(L^{p_3}_+)}.
$$
To start with , let us apply Proposition \ref{p:stokesbis} with regularity exponent $-1+\f dp\cdotp$ We get for all positive $T$:
\begin{equation}\label{eq:dZ}
\dZ(T)\leq
C\bigl(\|a_0\,t^{\alpha}(\Delta\dv-\nabla\dP)\|_{L^r_T(L^{
p}_+)}+\|t^\alpha(\df_1,\df_2,\nabla\dg,\p_t\dR)\|_{L^r_T(L^{p}_+)}.
\end{equation}
Again,  the first term  of the right-hand side may be absorbed if  the constant $c_1$
is small enough in \eqref{small1}.

\subsubsection*{Bounds for $\nabla\dg$}
{}From the definition of $\dg,$ we readily have
$$
\displaylines{\quad \|t^\alpha\na\dg \|_{L^{r}_T(L^{ p}_+)}\leq \|t^\alpha\na
A_2\otimes \nabla\dv\|_{L^{r}_T(L^{
p}_+)}+\|t^\alpha(\Id-A_2)\otimes\na^2\dv\|_{L^{r}_T(L^{
p}_+)}\hfill\cr\hfill +\|t^\alpha\na \dA\otimes \nabla v_1\|_{L^{r}_T(L^{
p}_+)}+\|t^\alpha\dA\otimes\na^2v_1\|_{L^{r}_T(L^{p}_+)}.\quad}
$$
Using H\"older inequality and \eqref{lipassume}, we get
$$
 \|t^\alpha\na A_2\otimes \nabla\dv\|_{L^{r}_T(L^{p}_+)}
 \leq \|t^\alpha\nabla\dv\|_{L_T^r(L^{p^*}_+)}\int_0^T\|\nabla^2v_2\|_{L^d_+}\,dt,
 $$
where $p^*\eqdefa dp/(d-p)$ stands for the Lebesgue exponent in the critical
Sobolev embedding
\begin{equation}\label{eq:critemb}
W^{1,p}(\R^d_+)\hookrightarrow L^{p^*}_+.
\end{equation}
Therefore, remembering that $\alpha+1/r<1$ and that $L^d_+\hookrightarrow L^{p}_+\cap L^{\wt p}_+,$
$$
 \|t^\alpha\na A_2\otimes \nabla\dv\|_{L^{r}_T(L^{p}_+)}
 \leq CT^{1-\f1r-\alpha}\|t^\alpha\nabla^2\dv\|_{L_T^r(L^{p}_+)}
\|t^\alpha\nabla^2v_2\|_{L^r_T(L^p_+\cap L^{\wt p}_+)}.
$$
Next, using the fact that
\begin{equation}\label{eq:Id-A2}
\|(\Id-A_2)(T)\|_{L^\infty_+}\leq C\int_0^T\|\nabla v_2\|_{L^\infty_+}\,dt
\leq CT^{1-\f1s-\delta}\|t^\delta\nabla v_2\|_{L_T^s(L^\infty_+)},
\end{equation}
where $s$ and $\delta$ are defined in \eqref{eq:s},
we easily get
$$
\|t^\alpha(\Id-A_2)\otimes\na^2\dv\|_{L^{r}_T(L^{p}_+)}\leq CT^{1-\f1s-\delta}\|t^\delta\nabla v_2\|_{L_T^s(L^\infty_+)}
\|t^\alpha\na^2\dv\|_{L^{r}_T(L^{p}_+)}.
$$
In order to bound the third term of $\nabla\dg,$ we notice from
\eqref{8.10} that
\begin{equation}\label{eq:ndA}
\|\nabla\dA\|_{L_T^\infty(L^p_+)}\leq  Ct^{1-\f1r-\alpha}\|t^\alpha\nabla^2\dv\|_{L_T^r(L^p_+)}.
\end{equation}
Therefore, setting $\f1m=\f1r-\f1s,$
$$\begin{array}{lll}
\|t^\alpha\na \dA\otimes \nabla v_1\|_{L^{r}_T(L^{p}_+)}&\leq& CT^{1-\f1r-\alpha}\|t^\alpha\nabla^2\dv\|_{L_T^r(L^p_+)}
\|\tau^{\alpha-\delta}\|_{L^m(0,T)}\|t^\delta\nabla v_1\|_{L_T^s(L^\infty_+)}\\[1ex]
&\leq&CT^{1-\f1s-\alpha}\|t^\alpha\nabla^2\dv\|_{L^r_T(L^p_+)}\|t^\delta\nabla
v_1\|_{L_T^s(L^\infty_+)}.
\end{array}$$
Finally, we have from \eqref{8.10} and Sobolev embedding \eqref{eq:critemb},
\begin{equation}\label{eq:dA}
\|\dA\|_{L^\infty_T(L^{p^*}_+)}\leq C T^{1-\f1r-\alpha}\|t^\alpha\na^2\dv\|_{L^r_T(L^{p}_+)}.
\end{equation}
Hence we obtain
$$
\|t^\alpha\dA\otimes\na^2v_1\|_{L^{r}_T(L^{p}_+)}\leq
 CT^{1-\f1r-\alpha}\|t^\alpha\na^2\d v\|_{L^r_T(L^p_+)}\|t^\alpha\na^2v_1\|_{L^r_T(L^d_+)}.
 $$
Putting together the four above estimates, we conclude that
\begin{multline}\label{eq:ndg}
\|t^\alpha\nabla\dg\|_{L^r_T(L^p_+)}\leq C\|t^\alpha\nabla^2\dv\|_{L^r_T(L^p_+)}
\Bigl(T^{1-\f1r-\alpha}\|t^\alpha(\nabla^2v_1,\nabla^2v_2)\|_{L_T^r(L^p_+\cap L^{\wt p}_+)}
\\+T^{1-\f1s-\alpha}\|t^\delta(\nabla v_1,\nabla v_2)\|_{L_T^s(L^\infty_+)}\Bigr)\cdotp
\end{multline}

 \subsubsection*{Bounds for $\p_t\dR$}
 Recall that
 $$
 \p_t\dR=-(\p_tA_2)\dv+(\Id-A_2)\p_t\dv-(\p_t\dA)\,v_1-\dA\,\p_tv_1.
 $$
 Now, using the expression of $\p_tA_2$ and H\"older inequality, we get
  $$
 \|t^\alpha\,(\p_tA_2)\dv\|_{L^{r}_T(L^{p}_+)} \leq C
\|t^\beta \na v_2\|_{L^{r_2}_T(L^{p_2}_+)}\|t^\gamma\dv\|_{L_T^{r_3}(L^{p_3}_+)}.
$$
Next, we have, using \eqref{eq:Id-A2}
$$
\begin{array}{lll}
\|t^\alpha(\Id-A_2)\p_t\dv\|_{L^{r}_T(L^{p}_+)}&\leq& C\|\Id-A_2\|_{L^\infty_T(L^\infty_+)}
\|t^\alpha\p_t\dv\|_{L^{r}_T(L^{p}_+)}\\[1ex]
 &\leq&CT^{1-\f1s-\delta}\|t^\delta\nabla v_2\|_{L_T^s(L^\infty_+)}\|t^\alpha\p_t\dv\|_{L^{r}_T(L^{p}_+)}.
\end{array}
$$
In order to bound the third term of $\p_t\dR,$
we differentiate \eqref{8.10} with respect to time and easily find that
$$
\|t^\alpha (\p_t\dA)\,v_1\|_{L^{r}_T(L^{p}_+)}
\leq C\Bigl(\|t^\alpha v_1\otimes\nabla\dv\|_{L^r_T(L^p_+)}
+\Bigl\| t^\alpha\: \int_0^t|\na \dv|\,dt' \big|\na v_{1,2} \big|
|v_1|\Bigr\|_{L^{r}_T(L^{ p}_+)} \Bigr)
$$
where $v_{1,2}$ designates components of $v_1$ or $v_2.$
\medbreak
On the one hand, applying H\"older  inequality gives
$$
\|t^\alpha \,v_1\otimes\na\dv\|_{L^{r}_T(L^{ p}_+)}\leq
C \|t^\gamma v_1\|_{L^{r_3}_T(L^{p_3}_+)} \|t^\beta\na \dv\|_{L^{r_2}_T(L^{p_2}_+)}.
$$
On the other hand, we have, by virtue of \eqref{eq:critemb}
$$\begin{array}{lll}
\!\Bigl\| t^\alpha\! \Int_0^t|\na \dv|\,dt' \big|\na v_{1,2} \big|
|v_1|\Bigr\|_{L^{r}_T(L^{ p}_+)}&\!\!\!\!\!\leq\!\!\!\!\!& C
\|t^\alpha\nabla v_{1,2}\otimes v_1\|_{L_T^r(L^d_+)}
\Int_0^T\|\nabla\dv\|_{L^{p^*}_+}\,dt\\[1ex]
&\!\!\!\!\!\leq\!\!\!\!\!& CT^{1-\f1r-\alpha}
\|t^\alpha\nabla^2\dv\|_{L_T^r(L^p_+)} \|t^\gamma
v_1\|_{L_T^{r_3}(L^{p_3}_+)} \|t^\beta\nabla
v_{1,2}\|_{L_T^{r_2}(L_+^a)}\end{array}
$$
with $\f1a=\f1d-\f1{p_2}.$ Given that $p<d<\wt p,$ we have
$p_2<a<\wt p_2,$ hence $L^a_+\hookrightarrow L_+^{p_2}\cap L_+^{\wt
p_2}.$ \medbreak Finally, arguing as for bounding the last term of
$\nabla\dg$ yields
$$
\|t^\alpha\dA\,\p_tv_1\|_{L^{r}_T(L^{p}_+)}\leq C T^{1-\f1r-\alpha}\|t^\alpha\p_tv_1\|_{L^r_T(L^{p}_+\cap L^{\wt p}_+)}
\|t^\alpha\na^2\dv\|_{L^r_T(L^{p}_+)}.
$$
Putting all those estimates together, we conclude that
\begin{equation}\label{eq:Rt}
\|\p_t\dR\|_{L^{r}_T(L^{p}_+)}\leq \eta(T) \dZ(T)
\end{equation}
for some function $\eta$ going to $0$ at $0.$

\subsubsection*{Bounds for $\df_1$}

We notice that
$$
\df_1=(1+a_0)\bigl((\Id-{}^T\!A_2)\nabla\dP-{}^T\!\dA\nabla P_1\bigr).
$$
Hence it suffices to follow the computations leading to the bounds
for the second and fourth terms of $\nabla\dg$: we just
have to change $\nabla^2\dv$ and $\nabla^2v_1$ into $\nabla\dP$ and $\nabla P_1.$
We end up with
\begin{multline}\label{eq:df1}
 \|t^\alpha\df_1\|_{L^{r}_T(L^{ p}_+)}\leq
C\Bigl(T^{1-\f1s-\delta}\|t^\delta\nabla v_2\|_{L_T^s(L^\infty_+)}\|t^\alpha\na\dP\|_{L^{r}_T(L^{
p}_+)}\\+T^{1-\f1r-\alpha}\|t^\alpha\nabla P_1\|_{L_T^r(L^p_+\cap L_+^{\wt p})}
\|t^\alpha\na^2\dv\|_{L^{r}_T(L^{p}_+)}\Bigr)\cdotp
\end{multline}

\subsubsection*{Bounds for $\df_2$}

As we may write
$$\displaylines{\quad
\|t^\alpha\df_2\|_{L^r_T(L^{p}_+)}\leq C\Bigl(
\|t^\alpha\nabla(A_2{}^T\!A_2)\otimes\nabla\dv\|_{L^r_T(L^{ p}_+)}
+\|t^\alpha(A_2{}^T\!A_2-\Id)\otimes\nabla^2\dv\|_{L^r_T(L^{ p}_+)}\hfill\cr\hfill
+\|t^\alpha\na(A_2{}^T\!A_2-A_1{}^T\!A_1)\otimes\nabla
v_1\|_{L^r_T(L^{ p}_+)}+\|(A_2{}^T\!A_2-A_1{}^T\!A_1)\nabla^2v_1\|_{L^r_T(L^{ p}_+)}\Bigr),}
$$
one may repeat the computations leading to \eqref{eq:ndg}:
this only a matter of replacing everywhere $A_1$ and $A_2$ by $A_1{}^T\!A_1$
and $A_2{}^T\!A_2,$ respectively.
We conclude that $\|t^\alpha\df_2\|_{L_T^r(L^p_+)}$ is bounded by the right-hand side of \eqref{eq:ndg}.

\subsubsection*{Conclusion}

Plugging \eqref{eq:ndg}, \eqref{eq:Rt}, \eqref{eq:df1} and the above inequality in \eqref{eq:dZ},
we conclude that whenever both $v_1$ and $v_2$ satisfy \eqref{lipassume}, we have for all $T>0$
$$
\dZ(T)\leq \eta(T)\dZ(T)
$$
for some function $\eta$ going to $0$ at $0.$
This implies uniqueness on a small time interval. Then a
standard continuation argument yields uniqueness on the whole
interval where the two solutions are defined.


\setcounter{equation}{0}
\section{Remarks on the bounded domain case}\label{sect5}

This  section aims at  extending partially  our main theorem to the
initial boundary value problem  \eqref{INS} in a $C^2$ bounded
domain $\Om$ of $\R^d.$ Before we present the main result, let us
introduce a few notation. We set \beno X^q(\Om)\eqdefa \Bigl\{\ u\in
\bigl(L^q(\Om)\bigr)^d\ |\ \dive u=0\ \ \mbox{and}\ \ u\cdot \vec
n=0\ \ \mbox{on}\ \ \p\Om\ \ \Bigr\}, \eeno where $\vec n$ stands
for the unit normal exterior vector at $\pa\Omega.$ Denoting by
$P_q$  the projection operator from $L^q(\Omega)$ onto
$X^q(\Omega),$ the Stokes operator on $L^q(\Omega)$ is the unbounded
operator (see e. g. \cite{Giga}) \beno
A_q\eqdefa-P_q\D\quad\mbox{with domain}\quad D(A_q)\eqdefa
W^{2,q}(\Om)\cap W^{1,q}_0(\Om)\cap X^q(\Om). \eeno
\begin{defi}\label{defi5.1}
Let $1<q<\infty.$ For $\al\in (0,1)$ and $s\in(1,\infty),$ we set
\beno \|u\|_{D^{\al,s}_{A_q}}\eqdefa
\|u\|_{L^q}+\Bigl(\int_0^\infty\|t^{1-\al}A_qe^{-tA_q}u\|_{L^q}^s\f{dt}{t}\Bigr)^{\f1s},
\eeno where $e^{-tA_q}$ stands for the analytic semigroup generated by $A_q.$ We then define the inhomogeneous fractional domain
$D^{\al,s}_{A_q}$ as the completion of $D(A_q)$ under
$\|u\|_{D^{\al,s}_{A_q}}.$
\end{defi}

\begin{rmk}\label{rmk5.1}{\sl
Let $\al\in (0,1)$ and $1<q, s<\infty.$ Let $\cB^\beta_{q,s}$ be the
completion of $C_c^\infty(\Om)$ in $B^\beta_{q,s}(\R^d).$ Then we have
\beno (\cB^{2\al}_{q,s}(\Om))^d\cap X^q(\Om)\hookrightarrow D^{\al,s}_{A_q}
\hookrightarrow ({B}^{2\al}_{q,s}(\Om))^d\cap X^q(\Om). \eeno If moreover $2\alpha<1/q$ then the three
sets coincide. }
\end{rmk}
The main result of this section reads:
\begin{thm}\label{th5.1}
{\sl Let $\Om$ be a bounded domain with $C^{2}$ boundary.
 Let $r\in (1,\infty),$ $a_0\in
L^\infty(\Om)$ and $ u_0\in D^{1-\f1r,r}_{A_{p}}$ with
$p=\f{dr}{3r-2}\cdotp$ There exists a positive constant $c_0$ so that if \beq \label{5.2}
\mu\|a_0\|_{L^\infty(\Om)}+\|u_0\|_{D^{1-\f1r,r}_{A_{p}}}\leq
c_0\mu, \eeq then \eqref{INS} has a global weak solution $(a,u,\nabla\Pi)$ in the
sense of Definition \ref{defi2.1}, which satisfies
 \beq\label{5.3}
\begin{aligned}
\mu^{1-\f1{r}}&\|u\|_{L^{\infty}(\R_+;D^{1-\f1r,r}_{A_{p}})}+
\|(\p_t u,\mu\nabla^2u,\nabla\Pi)\|_{L^r(\R_+;L^{p}(\Om))}
\\&\hspace{4cm}+\mu^{\f1{2r}}\|\nabla u\|_{L^{2r}(\R_+;L^{\f{dr}{2r-1}}(\Omega))} \leq C\mu^{1-\f1r}\|u_0\|_{D^{1-\f1r,r}_{A_{p}}}\end{aligned}
\eeq for some sufficiently large positive constant $C$.}
\end{thm}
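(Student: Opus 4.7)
The plan is to follow the existence argument for Theorem~\ref{th:main} in Section~\ref{s:existence}, replacing the explicit half-space Stokes formulae of Section~\ref{s:stokes} by the abstract theory of the Stokes operator $A_p$ on the bounded $C^2$ domain $\Omega$. After smoothing out the data $(a_0,u_0),$ I would construct a sequence $(a^n, u^n, \nabla\Pi^n)_{n\in\N}$ through the iteration scheme \eqref{eq:scheme1}: at step $n$, solve the transport equation $\pa_t a^{n+1} + u^n\cdot\nabla a^{n+1} = 0$ (which preserves the $L^\infty$ norm of $a_0$) followed by the linear Stokes system
$$\pa_t u^{n+1} - \mu\Delta u^{n+1} + \nabla\Pi^{n+1} = F^n\eqdefa a^{n+1}(\mu\Delta u^n - \nabla\Pi^n) - u^n\cdot\nabla u^n$$
with $\dive u^{n+1}=0$ in $\Omega$ and $u^{n+1}|_{\pa\Omega}=0.$

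The key analytic input is the maximal $L^r$--$L^p$ regularity estimate for this Stokes system, namely
$$\mu^{1-\f1r}\|u^{n+1}\|_{L^\infty(\R_+;D^{1-\f1r,r}_{A_p})} + \|(\pa_t u^{n+1},\mu\nabla^2u^{n+1},\nabla\Pi^{n+1})\|_{L^r(\R_+;L^p(\Omega))} \leq C\Bigl(\mu^{1-\f1r}\|u_0\|_{D^{1-\f1r,r}_{A_p}} + \|F^n\|_{L^r(\R_+;L^p(\Omega))}\Bigr),$$
which is classical in smooth bounded domains (see Giga--Sohr and Solonnikov) thanks to the fact that $D^{1-\f1r,r}_{A_p}$ is precisely the real interpolation trace space for the Stokes semigroup. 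Coupled with the Gagliardo--Nirenberg inequality for functions of $W^{1,p}_0(\Omega)\cap W^{2,p}(\Omega)$ (which holds on $C^2$ domains by boundary extension, as in the proof of Proposition~\ref{p:stokes}), it yields the complementary bound
$$\mu^{1-\f1{2r}}\|\nabla u^{n+1}\|_{L^{2r}(\R_+;L^{\f{dr}{2r-1}}(\Omega))} \leq C\Bigl(\mu^{1-\f1r}\|u_0\|_{D^{1-\f1r,r}_{A_p}} + \|F^n\|_{L^r(\R_+;L^p(\Omega))}\Bigr)\cdotp$$

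With these ingredients in hand, the uniform bounds for $(u^n,\nabla\Pi^n)_{n\in\N}$ would be derived as in Step~1 of the proof of Theorem~\ref{thm:auxiliary}: the $a_0$-contribution in $F^n$ is absorbed thanks to the conservation $\|a^{n+1}(t)\|_{L^\infty}=\|a_0\|_{L^\infty}$ and the smallness $\mu\|a_0\|_{L^\infty}\leq c_0\mu$, while the bilinear term is controlled through
$$\|u^n\cdot\nabla u^n\|_{L^r(L^p)}\leq C\|\nabla u^n\|_{L^{2r}(L^{\f{dr}{2r-1}})}^2,$$
using the Sobolev embedding $W^{1,\f{dr}{2r-1}}_0(\Omega)\hookrightarrow L^{\f{dr}{r-1}}(\Omega)$, and is absorbed by the smallness of $\|u_0\|_{D^{1-\f1r,r}_{A_p}}$. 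Passage to the limit then follows the compactness argument of Section~\ref{s:existence}, Step~3: Aubin--Lions delivers strong convergence of $u^n$ and $\nabla u^n$ in suitable $L^q_{loc}(L^m_{loc})$ spaces, while strong convergence of $a^n$ in $L^m_{loc}$ is obtained via the DiPerna--Lions renormalization identity $\pa_t(a^n)^2 + \dive(u^n (a^n)^2)=0$, handled exactly as in Section~\ref{s:existence}. The main obstacle is to pin down the correct form of maximal regularity in the fractional domain space $D^{1-\f1r,r}_{A_p}$ so that the trace of $u$ at $t=0$ is exactly $u_0$ in this norm; once this is granted, no genuinely new difficulty appears beyond transposing the half-space arguments.
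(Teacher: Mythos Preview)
Your proposal is correct and captures all the essential analytic ingredients: maximal $L^r$--$L^p$ regularity for the Stokes operator on a bounded $C^2$ domain (this is Proposition~\ref{prop5.1} in the paper), the Gagliardo--Nirenberg inequality for functions in $W^{1,p}_0(\Omega)\cap W^{2,p}(\Omega)$, the Sobolev embedding $W^{1,\frac{dr}{2r-1}}_0(\Omega)\hookrightarrow L^{\frac{dr}{r-1}}(\Omega)$ to bound the convection term, and the compactness/renormalization argument of Section~\ref{s:existence} to pass to the limit.

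The one methodological difference is how the approximate solutions are produced. You propose to re-run the Picard iteration~\eqref{eq:scheme1} after smoothing the data; the paper instead regularizes $(a_0,u_0)$ and then simply invokes the known well-posedness theory in bounded domains (reference~\cite{Dan}) to obtain smooth solutions $(a^n,u^n,\nabla\Pi^n)$ of the full nonlinear system~\eqref{INS} directly. The uniform bounds are then derived \emph{a priori} on these solutions via the quadratic inequality
\[
Z_n(t)\leq C\bigl(\mu^{1-\frac1r}\|u_0\|_{D^{1-\frac1r,r}_{A_p}}+\mu^{-(2-\frac1r)}Z_n^2(t)\bigr),
\]
which closes under the smallness assumption. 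This shortcut spares you from having to re-establish convergence of the iteration (which, as in Step~2 of the proof of Theorem~\ref{thm:auxiliary}, would require Lipschitz regularity of the approximate density and hence the preliminary smoothing you mention). Your route is more self-contained, the paper's is shorter; both lead to the same estimate~\eqref{5.3} and the same compactness endgame. Note also that the anisotropic splitting in~\eqref{eq:scheme1} is irrelevant here (as the paper remarks, an anisotropic smallness condition has no meaning in a general bounded domain), so the simpler iteration you actually wrote down is the appropriate one.
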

\begin{rmk}  {\sl Uniqueness would require our using  Lagrangian coordinates, hence  investigating
 the evolutionary Stokes system  \emph{with non homogeneous divergence} in a bounded domain.
 We leave this interesting issue to a future work.}
 \end{rmk}
 \begin{rmk}{\sl
 In general bounded domains, we do not expect an anisotropic smallness condition such as \eqref{small1}
 to have any relevancy. }
\end{rmk}
The proof of the theorem mainly relies on  the following result (see   Theorem 3.2 of \cite{Dan}):
\begin{prop}\label{prop5.1}
{\sl Let $\Om$ be a $C^{2}$ bounded domain of $\R^d$ and
$1<q,s<\infty.$ Assume that $u_0\in D^{1-\f1s,s}_{A_q}$ and $f\in
L^s(\R_+; L^q(\Om)).$ Then the system
 \beno\left\{\begin{array}{lll}
\displaystyle \pa_t u -\mu\D u+\na\Pi=f&\quad\hbox{in }&\R_+\times\Om, \\
\displaystyle \dive\, u = 0&\quad\hbox{in }&\R_+\times\Om,\\
 \displaystyle  u|_{\p\Om}=0&\quad\hbox{on }&\R_+\times\p\Om,
\end{array}\right.
\eeno
with initial data $u_0$
has a unique solution $(u,\Pi)$  with
$$u\in\cC_b(\R_+;D^{1-\f1s,s}_{A_q}),\quad
\pa_tu,\nabla^2u,\nabla\Pi\in L^s(\R_+;L^q(\Omega))\ \hbox{ and }\ \int_\Omega\Pi\,dx=0.$$
Furthermore,  for all $t>0,$
$$
\displaylines{
\mu^{1-\f1s}\|u(t)\|_{D^{1-\f1s,s}_{A_q}}+\|(\p_tu,\mu\nabla^2u,\nabla\Pi)\|_{L^s_t(L^q(\Om))}\leq
C\bigl(\mu^{1-\f1s}\|u_0\|_{D^{1-\f1s,s}_{A_q}}+\|f\|_{L^s_t(L^q(\Om))}\bigr).}
$$ }
\end{prop}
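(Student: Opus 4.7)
The plan is to rewrite the Stokes system as an abstract evolution equation for the divergence-free velocity, then invoke maximal $L^s$-regularity for the Stokes operator. First, I would rescale time via $t \mapsto \mu t$ to reduce to $\mu = 1$; this costs only the explicit powers of $\mu$ recorded in the estimate. Next, applying the Helmholtz projection $P_q\colon L^q(\Om) \to X^q(\Om)$ to the momentum equation and using $P_q \nabla \Pi = 0$ turns the system into the abstract Cauchy problem
$$\p_t u + A_q u = P_q f, \qquad u|_{t=0} = u_0,$$
posed on $X^q(\Om),$ with $u_0 \in D^{1-1/s,s}_{A_q}$ and $P_q f \in L^s(\R_+; X^q(\Om)).$

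The key step is then $L^s$-maximal regularity for the semigroup generated by $-A_q$ on $X^q(\Om).$ For a $C^2$ bounded domain, this is known for every $s \in (1, \infty)$: one route is to verify that $A_q$ admits bounded imaginary powers (proved by Giga, and extended to less regular domains by Noll--Saal and Geissert--Heck--Hieber) and invoke the Dore--Venni theorem; alternatively one checks that $A_q$ is $\mathcal{R}$-sectorial and applies Weis's theorem. This yields the unique solution
$$u(t) = e^{-t A_q} u_0 + \int_0^t e^{-(t-\tau) A_q} P_q f(\tau) \, d\tau,$$
together with the estimate $\|\p_t u\|_{L^s_t(L^q)} + \|A_q u\|_{L^s_t(L^q)} \leq C\bigl(\|u_0\|_{D^{1-1/s,s}_{A_q}} + \|f\|_{L^s_t(L^q)}\bigr).$ The occurrence of $D^{1-1/s,s}_{A_q}$ reflects the identification of the trace space for $L^s$-maximal regularity with the real interpolation space $(X^q(\Om), D(A_q))_{1-1/s,s},$ which matches Definition~\ref{defi5.1}.

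To upgrade the bound on $A_q u = -P_q \Delta u$ to a full $W^{2,q}$-bound on $u,$ I would invoke the classical elliptic estimate $\|u\|_{W^{2,q}} \leq C\bigl(\|A_q u\|_{L^q} + \|u\|_{L^q}\bigr)$ on $D(A_q)$ in a $C^2$ domain, itself a consequence of Giga's resolvent estimates for $A_q.$ Once $\nabla^2 u$ is controlled, the pressure gradient is recovered by writing $\nabla \Pi = (I - P_q)(f + \mu\Delta u),$ the constant being fixed by $\int_\Om \Pi \, dx = 0$; $L^q$-boundedness of $I - P_q$ then delivers the same $L^s_t(L^q)$-bound for $\nabla \Pi.$ Time continuity $u \in \cC_b(\R_+; D^{1-1/s,s}_{A_q})$ follows from the Duhamel formula: $t \mapsto e^{-tA_q} u_0$ is continuous into this space by analyticity combined with the trace-space characterization, and the convolution term inherits continuity from the maximal regularity estimate applied on shrinking subintervals together with a density argument.

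The principal obstacle is the maximal regularity step itself: for a merely $C^2$ domain, establishing bounded imaginary powers or $\mathcal{R}$-sectoriality of $A_q$ on $X^q(\Om)$ is a genuinely non-trivial harmonic analysis input. Granted this, the remaining steps — Helmholtz projection, elliptic regularity, pressure recovery, and time continuity via interpolation — are routine semigroup machinery.
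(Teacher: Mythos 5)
Your argument is correct and is essentially the approach of the source the paper relies on: the paper gives no proof of Proposition~\ref{prop5.1} but cites Theorem~3.2 of \cite{Dan}, which rests on exactly this reduction to the abstract Cauchy problem for $A_q$, maximal $L^s$-regularity for the Stokes semigroup, the identification of $D^{1-\f1s,s}_{A_q}$ with the real interpolation trace space, and pressure recovery via $\nabla\Pi=(I-P_q)(f+\mu\Delta u)$. The only point worth making explicit is that the constant in the estimate is uniform in $t$ on all of $\R_+$ because $A_q$ is invertible on a bounded domain (so the Stokes semigroup decays exponentially and global-in-time maximal regularity holds), and that $\|u\|_{L^q}\leq C\|A_qu\|_{L^q}$ closes the elliptic $W^{2,q}$ bound.
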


Now we are in a position to prove Theorem \ref{th5.1}.

\begin{proof}[Proof of Theorem \ref{th5.1}]
We first solve System \eqref{INS} with regularized data,
according to e.g. \cite{Dan}. We  get a sequence $(a^n,u^n,\nabla\Pi^n)_{n\in\N}$ of smooth solutions to \eqref{INS}.
In particular, as
 \beno \p_tu^n-\mu\D
u^n+\na\Pi^n=F^n \eqdefa a^n(\mu\D u^n-\na\Pi^n)-u^n\cdot\na u^n,
\eeno Proposition \ref{prop5.1} implies that
$$
\begin{aligned}
Z_n(t)\eqdefa&\mu^{1-\f1r}\|u^n(t)\|_{D^{1-\f1r,r}_{A_{p}}}+\|\p_tu^n\|_{L^r_t(L^{p})}+\mu\|\nabla^2 u^n\|_{L^r_t(L^{p})}+\|\na\Pi^n\|_{L^r_t(L^{p})}\\
&\leq C\bigl(\mu^{1-\f1r}\|u_0\|_{D^{1-\f1r,r}_{A_{p}}}+\|F^n\|_{L^r_t(L^{p})}\bigr).
\end{aligned}
$$
At this stage, one may apply the following Gagliardo-Nirenberg inequality
$$
\|\nabla u^n\|_{L^{\f{dr}{2r-1}}(\Om)}\leq C\|u^n\|_{B^{2-\f2r}_{p,r}(\Om)}^{\f12}\|\nabla^2u^n\|_{L^{p}(\Om)}^{\f12}.
$$
which holds true for any smooth function compactly supported in $\Omega$ (as it coincides
with the corresponding Gagliardo-Nirenberg inequality on $\R^n$) and may thus be
extended by density to any function in $W^{1,p}_0(\Omega)\cap W^{2,p}(\Omega).$


Therefore, using Remark \ref{rmk5.1} and the above bound for $Z_n(t)$ yields
\begin{equation}\label{eq:nablaun}
\|\nabla u^n\|_{L^{2r}_t(L^{\f{dr}{2r-1}}(\Om))}\leq C\bigl(\mu^{1-\f1r}\|u_0\|_{D^{1-\f1r,r}_{A_{p}}}+\|F^n\|_{L^r_t(L^{p}(\Om))}\bigr).
\end{equation}
Now,  taking advantage of H\"older inequality and of the Sobolev embedding
$$
W^{1,\f{dr}{3r-2}}_0(\Om)\hookrightarrow L^{\f{dr}{2r-1}}(\Om),
$$
we may write
\beno
\begin{aligned}
\|F^n\|_{L^r_t(L^{p}(\Om))}&\leq
\|a^n\|_{L^\infty_t(L^\infty(\Om))}\bigl(\mu\|\D
u^n\|_{L^r_t(L^{p}(\Om))}+\|\na\Pi^n\|_{L^r_t(L^{p}(\Om))}\bigr)\\
&\hspace{5cm}+\|u^n\|_{L^{2r}_t(L^{\f{dr}{r-1}}(\Om))}\|\na
u^n\|_{L^{2r}_t(L^{\f{dr}{2r-1}}(\Om))}\\
&\leq \|a_0\|_{L^\infty(\Om)}\bigl(\mu\|\D
u^n\|_{L^r_t(L^{p}(\Om))}+\|\na\Pi^n\|_{L^r_t(L^{p}(\Om))}\bigr)
+C\|\na u^n\|_{L^{2r}_t(L^{\f{dr}{2r-1}}(\Om))}^2.
\end{aligned}
\eeno Therefore taking $c_0$ small enough in \eqref{5.2}, we get by
using \eqref{eq:nablaun} that \beno Z_n(t)\leq
C\bigl(\mu^{1-\f1r}\|u_0\|_{D^{1-\f1r,r}_{A_{p}}}+\mu^{-(2-\f1r)}Z_n^2(t)\bigr),
\eeno so that as long as
$\|u_0\|_{D^{1-\f1r,r}_{A_{p}}}\leq \frac{1}{4C^2} \mu,$ we
have \beno Z_n(t)\leq 2C\mu^{1-\f1r}\|u_0\|_{D^{1-\f1r,r}_{A_{p}}}.\eeno
Granted with this estimate,
we can follow the lines of the proof of Theorem \ref{th:main} to
complete the proof of Theorem \ref{th5.1}.
\end{proof}


\appendix\section{}

Here we establish several $L^p -  L^q$ or maximal regularity type
estimates involving the heat semigroup in the whole space, or the
Stokes semigroup in a bounded $C^2$ domain $\Omega.$ Although those
estimates belong to the mathematical folklore (as a matter of fact
the heat semigroup case in $\R^d$ has been treated in \cite{HPZ3}),
we did not find any reference where they are proved with this degree
of generality.

As in \cite{HPZ3}, the key to the proof of maximal regularity
estimates is the following proposition (see e. g. Th. 2.34 of
\cite{BCD})  enabling us to  characterize Besov spaces with negative
indices by means of the heat semigroup.
\begin{prop} \label{prop3.1}
{\sl Let $s$ be a negative real number and $(p,r)\in [1,\infty]^2.$
A constant $C$ exists such that for all $\mu>0,$ we have \beno
C^{-1}\mu^{\f{s}2}\|f\|_{\dot B^{s}_{p,r}(\R^d)}\leq
\bigl\|\|t^{-\f{s}2}e^{\mu t\D}f\|_{L^p(\R^d)}\bigr\|_{L^r(\R_+;\f{dt}{t})}\leq
C\mu^{\f{s}2}\|f\|_{\dot B^{s}_{p,r}(\R^d)}. \eeno }
\end{prop}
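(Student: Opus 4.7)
My plan is first to reduce to the case $\mu=1$ by scaling, and then to establish the two-sided bound by combining the Littlewood--Paley decomposition of $f$ with sharp heat-kernel estimates on each dyadic block. The rescaling $\tau=\mu t$ satisfies $d\tau/\tau=dt/t$, so an elementary change of variable gives
$$\bigl\|\|t^{-s/2}e^{\mu t\Delta}f\|_{L^p}\bigr\|_{L^r(\R_+;dt/t)}=\mu^{s/2}\bigl\|\|\tau^{-s/2}e^{\tau\Delta}f\|_{L^p}\bigr\|_{L^r(\R_+;d\tau/\tau)},$$
which absorbs the factor $\mu^{s/2}$ and reduces the problem to proving the equivalence with $\|f\|_{\dot B^s_{p,r}(\R^d)}$ for $\mu=1$.

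For the upper bound I would decompose $f=\sum_k\dot\Delta_kf$ and use the classical estimate $\|e^{\tau\Delta}\dot\Delta_kf\|_{L^p}\leq Ce^{-c\tau 2^{2k}}\|\dot\Delta_kf\|_{L^p}$, which follows from the fact that $e^{\tau\Delta}\dot\Delta_k$ has Fourier symbol supported on $|\xi|\sim 2^k$. Setting $c_k:=2^{sk}\|\dot\Delta_kf\|_{L^p}$ and passing to the log variable $y:=\log\tau$ (so that $d\tau/\tau=dy$), this yields
$$\tau^{-s/2}\|e^{\tau\Delta}f\|_{L^p}\leq C\sum_{k\in\Z}K(y+2k\log 2)\,c_k,\qquad K(y):=e^{-sy/2}\,e^{-ce^y},$$
and $K\in L^1(\R;dy)$ \emph{precisely because $s<0$} (integrability at $y=-\infty$, equivalently at $\tau=0$). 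Since $K$ moreover decays rapidly at both $\pm\infty$, the sum $\Sigma(y):=\sum_kK(y+2k\log 2)$ is uniformly bounded in $y$; a direct H\"older inequality in the $k$-variable with weight $K(y+2k\log 2)$ then gives $\bigl|\sum_kK(y+2k\log 2)c_k\bigr|^r\leq\Sigma(y)^{r-1}\sum_kK(y+2k\log 2)|c_k|^r$, and integrating in $y$ and invoking Fubini produces $\bigl\|\tau^{-s/2}\|e^{\tau\Delta}f\|_{L^p}\bigr\|_{L^r(dy)}\leq C\|K\|_{L^1}^{1/r}\|c_k\|_{\ell^r}=C'\|f\|_{\dot B^s_{p,r}}$.

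For the lower bound I would exploit frequency localization. On the annulus $|\xi|\sim 2^k$ the multiplier $e^{\tau|\xi|^2}$ is bounded above and below by absolute constants whenever $\tau 2^{2k}\lesssim 1$, which (by the standard Bernstein-type lemma applied to the symbol $\widetilde\varphi(2^{-k}\xi)e^{\tau|\xi|^2}$, with $\widetilde\varphi\equiv 1$ on $\mathrm{supp}\,\varphi$) implies $\|\dot\Delta_kf\|_{L^p}\leq C\|e^{\tau\Delta}\dot\Delta_kf\|_{L^p}$ for every $\tau$ in $I_k:=[2^{-2k-1},2^{-2k}]$. Combining with the uniform $L^p$-continuity of $\dot\Delta_k$ and with the equivalence $\tau^{-s/2}\sim 2^{sk}$ on $I_k$ yields $2^{sk}\|\dot\Delta_kf\|_{L^p}\leq C\tau^{-s/2}\|e^{\tau\Delta}f\|_{L^p}$ on $I_k$; raising to the $r$-th power, integrating over $I_k$ against $d\tau/\tau$ (which has mass $\log 2$ on each $I_k$) and summing over $k\in\Z$ using the disjointness of the $I_k$ then gives $\|f\|_{\dot B^s_{p,r}}^r\leq C\int_0^\infty\tau^{-sr/2}\|e^{\tau\Delta}f\|_{L^p}^r\,d\tau/\tau$, with the obvious supremum variant when $r=\infty$.

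The main technical point is in the upper bound: a naive triangle inequality $\|\sum_kh_k\|_{L^r}\leq\sum_k\|h_k\|_{L^r}$ would only produce an $\ell^1$-bound on the coefficients $c_k$, which is strictly weaker than the $\ell^r$-bound required for $r>1$. The mixed discrete--continuous convolution structure sketched above, together with the uniform control of $\Sigma(y)$, is precisely what preserves the correct $\ell^r\to L^r$ relation; and this is the exact point at which the assumption $s<0$ enters, through the $L^1$-integrability of the kernel $K$ at $y=-\infty$.
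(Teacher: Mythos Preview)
The paper does not give its own proof of this proposition: it is quoted from the literature with the reference ``see e.g.\ Th.~2.34 of \cite{BCD}''. Your argument is correct and is essentially the standard proof one finds in that reference --- the scaling reduction to $\mu=1$, the upper bound via the smoothing estimate $\|e^{\tau\Delta}\dot\Delta_kf\|_{L^p}\leq Ce^{-c\tau 2^{2k}}\|\dot\Delta_kf\|_{L^p}$ combined with a discrete-convolution/Young argument in the logarithmic variable (which is exactly where $s<0$ enters), and the lower bound via the invertibility of $e^{\tau\Delta}$ on the dyadic block when $\tau\sim 2^{-2k}$, together with the disjointness of the intervals $I_k$.

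Two minor remarks. First, in your upper-bound computation the constant you end up with is $(\|\Sigma\|_{L^\infty}^{r-1}\|K\|_{L^1})^{1/r}$ rather than $\|K\|_{L^1}^{1/r}$ alone; this is harmless. Second, the intervals $I_k=[2^{-2k-1},2^{-2k}]$ are disjoint but do not cover $(0,\infty)$; this is irrelevant for the lower bound (you only need disjointness), but it is worth saying explicitly to avoid the impression that a covering is being used.
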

 We shall often use the following consequence of the above proposition:
\begin{cor}\label{c:besov}{\sl
For any $(p,r)\in[1,\infty]^2$ with $r$ finite,   there exists a constant $C$ so that for all $\mu>0,$
 $$
C^{-1}\|f\|_{\dot B^{-\f2r}_{p,r}(\R^d)}\leq
\mu^{\f1r}\|e^{\mu t\D}f\|_{L^r(\R_+;L^p(\R^d))}\leq C\|f\|_{\dot B^{-\f2r}_{p,r}(\R^d)}.
$$
Besides, for all  $T\geq0,$ we have
$$
\|e^{\mu T\Delta}f\|_{\dot B^{-\f2r}_{p,r}(\R^d)} \leq C\|f\|_{\dot B^{-\f2r}_{p,r}(\R^d)},
$$
and, if in addition $p<\infty$ then the map $T\mapsto e^{\mu T\Delta} f$ is continuous on $\R_+$ with values in $\dot B^{2-\f2r}_{p,r}(\R^d),$
whenever   $f$ is in $\dot B^{2-\f2r}_{p,r}(\R^d).$
}
\end{cor}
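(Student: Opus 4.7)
The first equivalence is a direct consequence of Proposition \ref{prop3.1} with $s=-2/r$. Indeed, for any nonnegative measurable function $u(t)$, one has
$$
\Bigl(\int_0^\infty \bigl(t^{1/r}\,u(t)\bigr)^{r}\,\frac{dt}{t}\Bigr)^{1/r}=\Bigl(\int_0^\infty u(t)^{r}\,dt\Bigr)^{1/r},
$$
so that the weighted quantity appearing in Proposition \ref{prop3.1} applied to $u(t)=\|e^{\mu t\Delta}f\|_{L^p}$ is exactly $\|e^{\mu t\Delta}f\|_{L^r(\R_+;L^p)}$. Since the corresponding factor is $\mu^{s/2}=\mu^{-1/r}$, multiplying by $\mu^{1/r}$ yields the claim.

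For the second inequality, I would apply the already proved equivalence to both sides: on the one hand $\|f\|_{\dot B^{-2/r}_{p,r}}\sim\mu^{1/r}\|e^{\mu t\Delta}f\|_{L^r(\R_+;L^p)}$, while on the other
$$
\|e^{\mu T\Delta}f\|_{\dot B^{-2/r}_{p,r}}\sim \mu^{1/r}\|e^{\mu t\Delta}(e^{\mu T\Delta}f)\|_{L^r(\R_+;L^p)}=\mu^{1/r}\Bigl(\int_T^\infty\|e^{\mu s\Delta}f\|_{L^p}^{r}\,ds\Bigr)^{1/r},
$$
after the change of variable $s=t+T$. The latter is clearly bounded by the former, which establishes the constant-in-$T$ bound.

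For the continuity assertion, I would combine the first part with the lifting identity $\|g\|_{\dot B^{2-2/r}_{p,r}}\sim\|\Delta g\|_{\dot B^{-2/r}_{p,r}}$ valid for tempered distributions satisfying \eqref{eq:lf}. This yields the equivalence
$$
\|e^{\mu T_1\Delta}f-e^{\mu T_2\Delta}f\|_{\dot B^{2-2/r}_{p,r}}\sim \mu^{1/r}\bigl\|\Delta e^{\mu(t+T_1)\Delta}f-\Delta e^{\mu(t+T_2)\Delta}f\bigr\|_{L^r(\R_+;L^p)}.
$$
For every fixed $t>0$, the function $w_t\eqdefa \Delta e^{\mu t\Delta}f$ lies in $L^p$, and writing the integrand as $e^{\mu T_1\Delta}w_t-e^{\mu T_2\Delta}w_t$ shows it tends to $0$ in $L^p$ as $T_2\to T_1$, provided $p<\infty$ so that the heat semigroup is strongly continuous on $L^p$. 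The $L^p$-contractivity of $e^{\mu T\Delta}$ also bounds the integrand by $2\|w_t\|_{L^p}$, which is in $L^r(\R_+;dt)$ by the first part applied to $\Delta f\in\dot B^{-2/r}_{p,r}$. Dominated convergence then yields the continuity.

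The only mildly delicate point is the lifting identity $\|\Delta g\|_{\dot B^{-2/r}_{p,r}}\sim \|g\|_{\dot B^{2-2/r}_{p,r}}$ in the homogeneous setting; this is standard via Littlewood--Paley, using that \eqref{eq:lf} is satisfied below the critical index $s<d/p$, so there is no genuine obstacle.
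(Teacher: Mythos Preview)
Your proof is correct. The first part matches the paper exactly. For the second part, the paper simply invokes that $(e^{\mu t\Delta})_{t>0}$ is contracting on $L^p(\R^d)$ to bound $\|e^{\mu(t+T)\Delta}f\|_{L^p}\le\|e^{\mu t\Delta}f\|_{L^p}$ pointwise in $t$, whereas your change of variable $s=t+T$ achieves the same conclusion without even needing contractivity; this is a minor but neat variation.

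The real difference lies in the continuity assertion. The paper argues by density: smooth functions are dense in $\dot B^{2-2/r}_{p,r}(\R^d)$ when $p,r<\infty$, the map $T\mapsto e^{\mu T\Delta}f$ is clearly continuous for smooth $f$, and the uniform-in-$T$ boundedness of $e^{\mu T\Delta}$ on the Besov space lets one pass to the limit. Your route is more direct: via the lifting $\|g\|_{\dot B^{2-2/r}_{p,r}}\sim\|\Delta g\|_{\dot B^{-2/r}_{p,r}}$ and the first equivalence, you reduce continuity to an $L^r(\R_+;L^p)$ statement and then apply dominated convergence, using strong continuity of the heat semigroup on $L^p$ (here $p<\infty$ is used) and the bound $\|e^{\mu T\Delta}w_t\|_{L^p}\le\|w_t\|_{L^p}$. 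Both arguments are standard; yours has the advantage of being completely self-contained and making explicit where $p<\infty$ enters, while the paper's density argument is shorter to state but leaves the verification for smooth data and the uniform Besov bound implicit.
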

\begin{proof}
The first item corresponds to the previous proposition with
$s=-2/r.$ Given that $(e^{\mu t\Delta})_{t>0}$ is a contracting
semigroup over $L^p(\R^d),$ we readily get the second item. The
continuity result is a consequence of the density of smooth
functions in $\dot B^{2-\f2r}_{p,r}(\R^d)$ if both $p$ and $r$ are
finite.
\end{proof}

To prove Theorem \ref{th:main}, we also need the following result:

\begin{lem}\label{lem3.1}
{\sl The operator $\cA$ defined by
\begin{equation}\label{eq:A}
\cA: f\longmapsto \Bigl\{t\mapsto\int_0^t\nabla^2e^{\mu(t-\tau)\Delta}f\,d\tau\Bigr\}\end{equation}
 is bounded from  $L^r(0,T;L^p(\R^d))$ to $L^r(0,T; L^p(\R^d))$ for every $T\in (0,\infty]$
and $1 < p,r<\infty$. Moreover, there exists some constant $C$ so that for all positive $\mu$ and $T$ we have
 $$\displaylines{ \mu\|\cA
f\|_{L^r_T(L^p(\R^d))}\leq C\|f\|_{L^r_T(L^p(\R^d))}\cr
\mu^{1-\frac1r}\Bigl\|\int_0^Te^{\mu(T-t)\Delta}f\,dt\Bigr\|_{\dot
B^{2-\f2r}_{p,r}(\R^d)} \leq C\|f\|_{L^r_T(L^p(\R^d))}.}
$$
Furthermore, the map $T\mapsto \int_0^Te^{\mu(T-t)\Delta}f\,dt$ is continuous on $\R_+$ with
values in $\dot B^{2-\f2r}_{p,r}(\R^d).$}
\end{lem}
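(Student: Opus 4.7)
First, I would reduce to the case $\mu=1$ via the time rescaling $t\leadsto \mu t$: if $\tilde f(t,x)\eqdefa f(t/\mu,x)$, then $(\cA_{\mu=1}\tilde f)(\mu t,x)=\mu\,(\cA_\mu f)(t,x)$, so the desired estimates at general $\mu$ follow from those at $\mu=1$ together with the fact that $\|\tilde f\|_{L^r(\R_+;L^p)}=\mu^{1/r}\|f\|_{L^r(\R_+;L^p)}$ and $\|e^{T\Delta}g\|_{\dot B^{2-2/r}_{p,r}}$ behaves covariantly under the same scaling (by Proposition~\ref{prop3.1}). So from now on $\mu=1$.

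The first inequality is the classical parabolic maximal regularity estimate for the heat equation on $\R^d$. I would not reprove it, only indicate two viable routes: (i) the de~Simon / Dore--Venni approach, which identifies $\cA$ with a bounded $H^\infty$-functional-calculus operator on the UMD space $L^p(\R^d)$; (ii) a hands-on Littlewood--Paley route, where $\ppd_j$ commutes with $\cA$ and the kernel bound
$\|\nabla^2 e^{s\Delta}\ppd_j f\|_{L^p}\lesssim 2^{2j}e^{-c s 2^{2j}}\|\ppd_j f\|_{L^p}$
together with Young's inequality in $s$ yields the estimate in the Besov norms $\dot B^0_{p,q}$; the full $L^p$ estimate then follows from vector-valued Calder\'on--Zygmund / operator-valued Mikhlin theory on the UMD space $L^p(\R^d)$.

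For the second (trace-type) inequality, the key trick is a zero extension in time. Define $\tilde f\eqdefa f\,\car_{[0,T]}$ and $\tilde u(t)\eqdefa \int_0^t e^{(t-\tau)\Delta}\tilde f(\tau)\,d\tau$. Then $\tilde u(t)=u(t)\eqdefa \int_0^t e^{(t-\tau)\Delta} f(\tau)d\tau$ for $t\le T$ and, crucially,
\[
\tilde u(t)=e^{(t-T)\Delta}u(T)\qquad\text{for }t>T.
\]
Applying the first inequality on all of $\R_+$ gives $\|\nabla^2\tilde u\|_{L^r(\R_+;L^p)}\le C\|f\|_{L^r_T(L^p)}$. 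Restricting the time integral to $t>T$ and changing variable $s=t-T$ yields
\[
\int_0^\infty \|\nabla^2 e^{s\Delta}u(T)\|_{L^p}^r\,ds\;\le\; C^r\|f\|_{L^r_T(L^p)}^r.
\]
The left-hand side is exactly (the $r$-th power of) the equivalent norm for $\dot B^{-2/r}_{p,r}(\R^d)$ of $\Delta u(T)$ provided by Proposition~\ref{prop3.1} with $s=-2/r$; since $\dot B^{2-2/r}_{p,r}\simeq \Delta^{-1}\dot B^{-2/r}_{p,r}$, this gives $\|u(T)\|_{\dot B^{2-2/r}_{p,r}}\le C\|f\|_{L^r_T(L^p)}$, which is the desired bound after restoring $\mu$.

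Finally, for the continuity of $T\mapsto u(T)$ in $\dot B^{2-2/r}_{p,r}(\R^d)$, I would argue by density. Pick a sequence $(f_n)\subset C^\infty_c(\R_+\times\R^d)$ with $f_n\to f$ in $L^r(\R_+;L^p)$; the corresponding $u_n(T)$ are smooth functions of $T$ valued in any $L^q$-based Sobolev space and hence in $\dot B^{2-2/r}_{p,r}$, while the just-proved estimate applied to $f-f_n$ yields $\sup_T\|u(T)-u_n(T)\|_{\dot B^{2-2/r}_{p,r}}\le C\|f-f_n\|_{L^r(\R_+;L^p)}\to 0$; a uniform limit of continuous curves is continuous. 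The main obstacle is the first inequality, whose proof genuinely requires vector-valued singular-integral theory; once it is granted, the remaining two statements are essentially formal consequences.
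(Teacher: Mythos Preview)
Your proposal is correct and follows essentially the same route as the paper: the first inequality is quoted as the standard maximal regularity estimate (the paper cites Lemari\'e-Rieusset, Lemma~7.3), and the second is obtained by the identical zero-extension trick $f\mapsto f\,\car_{[0,T]}$, observing that $\cA(f\car_{[0,T]})(T+s)=e^{\mu s\Delta}\cA f(T)$ and invoking the heat-semigroup characterization of $\dot B^{-2/r}_{p,r}$ (Proposition~\ref{prop3.1}/Corollary~\ref{c:besov}); continuity is by density in both cases.
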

\begin{proof}
 The first part of the statement is Lemma 7.3 of \cite{Lem}.
 For establishing the second inequality, we just have to notice that
 $$
 \Bigl\|\int_0^Te^{\mu(T-t)\Delta}f\,dt\Bigr\|_{\dot B^{2-\f2r}_{p,r}(\R^d)}\leq C\|\cA f(T)\|_{\dot B^{-\f2r}_{p,r}(\R^d)}.
 $$
 Using that $(e^{\mu t\Delta})_{t>0}$ is contracting over $L^p(\R^d),$ and Corollary \ref{c:besov},
 we may write
 $$
 \|\cA f(T)\|_{\dot B^{-\f2r}_{p,r}(\R^d)}\leq C\mu^{\f1r}\|\cA f\|_{L^r(\R_+;L^p(\R^d))}.
 $$
 By changing $f$ to  $f1_{[0,T]},$ one gets the desired inequality. The continuity result follows by density.
\end{proof}
{}From now on, to simplify the presentation, we agree that $A_p$ denotes either the Stokes operator on $L^p(\Omega)$
with $\Omega$ a $C^2$ bounded domain (see just above Definition \ref{defi5.1}), or the heat operator on $L^p(\R^d).$
 Lemma \ref{lem3.1} extends as follows:
\begin{lem}\label{l:A}
Let $1<p,r<\infty$ and  Operator $\wt \cA_p$ be defined by
$$
\wt \cA_p: f\longmapsto \biggl[t\mapsto \int_0^tA_pe^{-(t-\tau)A_p}f(\tau)\,d\tau\biggr]\cdotp
$$
Then for all real number $\alpha\in[0,1-1/r)$  there exists a constant $C$ so that for all $T>0,$
$$
\|t^\alpha\wt\cA_p f\|_{L^r_T(L^p(\Omega))}\leq C\|t^\alpha f\|_{L^r_T(L^p(\Omega))}.
$$
\end{lem}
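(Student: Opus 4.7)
The strategy is to reduce to the unweighted maximal regularity encoded in Lemma~\ref{lem3.1} (heat case) and Proposition~\ref{prop5.1} (Stokes case), by absorbing the weight into the source. Setting $g(\tau)\eqdefa \tau^\alpha f(\tau),$ the claim amounts to showing that the operator
$$
Sg(t)\eqdefa\int_0^t (t/\tau)^\alpha A_p e^{-(t-\tau)A_p}g(\tau)\,d\tau
$$
is bounded on $L^r_T(L^p).$ The natural splitting $S=S^{near}+S^{far}$ according to whether $\tau>t/2$ or $\tau<t/2$ allows one to treat separately the singular behavior of $(t/\tau)^\alpha$ at $\tau=0$ and the singular behavior of $A_p e^{-(t-\tau)A_p}$ at $\tau=t.$

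For $S^{far},$ the analyticity estimate $\|A_p e^{-sA_p}\|_{L^p\to L^p}\leq Cs^{-1}$ together with $t-\tau\geq t/2$ yields the pointwise bound
$$
\|S^{far}g(t)\|_{L^p}\leq C\,t^{\alpha-1}\int_0^{t/2}\tau^{-\alpha}\|g(\tau)\|_{L^p}\,d\tau.
$$
The substitution $\tau=st$ followed by Minkowski's inequality in time reduces the $L^r_T$-boundedness of this quantity to the finiteness of $\int_0^{1/2}s^{-\alpha-1/r}\,ds,$ which holds precisely because $\alpha<1-1/r$ (the sharp hypothesis of the lemma).

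For $S^{near},$ I would decompose $(t/\tau)^\alpha=1+[(t/\tau)^\alpha-1].$ The principal ``$1$'' contribution equals $\wt\cA_p g(t)$ minus its $\int_0^{t/2}$ tail; the first piece is controlled in $L^r_T(L^p)$ by the \emph{unweighted} maximal regularity from Lemma~\ref{lem3.1} or Proposition~\ref{prop5.1}, while the tail falls into the $S^{far}$ analysis without the weight. For the remainder, the elementary estimate $|(t/\tau)^\alpha-1|\leq C_\alpha(t-\tau)/t$ valid for $\tau\in(t/2,t)$ cancels the $(t-\tau)^{-1}$ singularity of the semigroup derivative, leaving
$$
\Bigl\|\int_{t/2}^t[(t/\tau)^\alpha-1]A_p e^{-(t-\tau)A_p}g(\tau)\,d\tau\Bigr\|_{L^p}\leq \frac{C}{t}\int_{t/2}^t\|g(\tau)\|_{L^p}\,d\tau,
$$
which is controlled by the Hardy--Littlewood maximal function of $\tau\mapsto \|g(\tau)\|_{L^p}$ and is thus bounded on $L^r(0,T)$ for $r>1.$

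The main obstacle is the interplay in $S^{near}$ between the borderline $(t-\tau)^{-1}$ singularity of $A_p e^{-(t-\tau)A_p}$ and the weight factor $(t/\tau)^\alpha$: it is only thanks to the cancellation built into maximal regularity (used on the ``$1$'' part) combined with the Lipschitz-type control on $(t/\tau)^\alpha-1$ near $\tau=t$ (applied to the remainder) that the off-diagonal behavior survives. All bounds hold with a constant independent of $T$ (by extending $g$ by zero outside $[0,T]$ when necessary), so the conclusion will be uniform in $T>0,$ as required.
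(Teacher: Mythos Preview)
Your argument is correct, but it follows a genuinely different route from the paper's. The paper does not split the kernel at $\tau=t/2$; instead it sets $v(t)=\int_0^te^{-(t-\tau)A_p}f(\tau)\,d\tau$, observes the identity
\[
\p_t(t^\alpha v)+A_p(t^\alpha v)=t^\alpha f+\alpha\,t^{\alpha-1}v,
\]
and applies the \emph{unweighted} maximal regularity (Lemma~\ref{lem3.1} / Proposition~\ref{prop5.1}) once, to the source $t^\alpha f+\alpha t^{\alpha-1}v$. This reduces the whole estimate to bounding $\|t^{\alpha-1}v\|_{L^r_T(L^p)}$, which follows from the contraction bound $\|v(t)\|_{L^p}\leq\int_0^t\|f\|_{L^p}\,d\tau$ and the same Hardy-type computation $\int_0^1 s^{-\alpha-1/r}\,ds<\infty$ that you use for $S^{far}$. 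So both proofs hinge on unweighted maximal regularity plus the condition $\alpha+1/r<1$; the difference is that the paper's ``differentiate the weight'' trick packages everything into a single application of maximal regularity and a single lower-order term, whereas your kernel decomposition requires three separate pieces ($S^{far}$, the principal part of $S^{near}$, and the Lipschitz remainder handled via the maximal function). Your approach is more hands-on and perhaps more robust if one later wants to vary the weight, while the paper's is shorter and avoids invoking Hardy--Littlewood.
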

\begin{proof}
As regards the heat semigroup in $\R^d,$   this result has been established in \cite{HPZ3}.
 We here propose another proof that also works for the Stokes semigroup in bounded domains
 (and, more generally, whenever maximal regularity estimates are available).
Let  \begin{equation}\label{eq:duhamel}
v(t)\eqdefa \Int_0^te^{-(t-\tau)A_p}f(\tau)\,d\tau.\end{equation}
Because
$$
\p_t(t^\alpha v)+A_p(t^\alpha v)=t^\alpha f+\alpha t^{\alpha-1}v\quad\hbox{and}\quad (t^\alpha v)|_{t=0}=0,
$$
we readily have
\begin{equation}\label{eq:AA}
\|t^\alpha \wt \cA_pv\|_{L^r_T(L^p(\Omega))}\leq C\bigl(\|t^\alpha f\|_{L^r_T(L^p(\Omega))}+\alpha\|t^{\alpha-1}v\|_{L^r_T(L^p(\Omega))}\bigr).
\end{equation}
{}From the definition of $v$ and the fact that $(e^{-\lambda A_p})_{\lambda>0}$ is contracting on $L^p(\Omega),$ we infer that
$$
\|v(t)\|_{L^p(\Omega)}\leq \int_0^t\|f(\tau)\|_{L^p(\Om)}\,d\tau.
$$
Therefore,
$$
\|t^{\alpha-1} v(t)\|_{L^p(\Omega)}\leq \int_0^t \biggl(\frac t\tau\biggr)^\alpha F(\tau)\,\frac{d\tau}t
\quad\hbox{with}\quad F(\tau)\eqdefa \|\tau^\alpha f(\tau)\|_{L^p(\Omega)},
$$
that is to say,
$$
\|t^{\alpha-1} v(t)\|_{L^p(\Omega)}\leq \int_0^1(\tau')^{-\alpha} F(t\tau')\,d\tau'.
$$
Hence taking the norm in $L^r(0,T),$ and using Minkowski inequality and $\alpha+1/r<1,$
$$\begin{array}{lll}
\|t^{\alpha-1}v\|_{L^r_T(L^p(\Omega))}&\leq& \Int_0^1(\tau')^{-\alpha}\biggl(\int_0^T F^r(t\tau')\,dt\biggr)^{\frac1r}\,d\tau'\\[1ex]
&\leq&\Int_0^1(\tau')^{-\alpha-1/r}\biggl(\Int_0^{\tau' T} F^r(t')\,dt'\biggr)^{\frac1r}\,d\tau'\\[2ex]
&\leq& C\|F\|_{L^r(0,T)}.
\end{array}
$$
Plugging this inequality in \eqref{eq:AA} completes the proof.
\end{proof}
\begin{rmk}{\sl
Applying the above result in the Stokes case implies that the function $v$
defined in \eqref{eq:duhamel} and the corresponding gradient term $\nabla \Pi$ satisfy
\begin{equation}\label{eq:cA}\|t^\alpha(\p_tv,\nabla^2v,\nabla\Pi)\|_{L^r_T(L^p(\Omega))}\leq C\|t^\alpha f\|_{L^r_T(L^p(\Omega))}.\end{equation}}
\end{rmk}

The next lemma provides estimates on the gradient.
\begin{lem}\label{l:B} Let $1<p,q,r<\infty$ with $p\leq q.$
Let Operator $\cB$ be defined by
$$
\cB: f\longmapsto \biggl[t\mapsto \int_0^t\nabla e^{-(t-\tau)A_p}f(\tau)\,d\tau\biggr]\cdotp
$$
\begin{enumerate}
\item   If in addition $\frac dp-\frac dq\leq1$ then for all real numbers $\alpha$ and $\beta$ satisfying
\begin{equation}\label{eq:B1rel}
\beta=\alpha+\frac d2\biggl(\frac1p-\frac1q\biggr)-\frac12\quad\hbox{and}\quad \alpha r'<1
\end{equation}
 there exists a constant $C$ so that for all $T>0,$
\begin{equation}\label{eq:B1}
\|t^\beta\cB f\|_{L^r_T(L^q(\Omega))}\leq C\|t^\alpha f\|_{L^r_T(L^p(\Omega))}.
\end{equation}
\item
   If in addition $\frac dp-\frac dq < 1-\frac2{r}$ then for all real numbers $\alpha$ and $\beta$ satisfying
\begin{equation}\label{eq:B2rel}
\beta=\alpha+\frac d2\biggl(\frac1p-\frac1q\biggr)-\frac12+\frac1r\quad\hbox{and}\quad \alpha r'<1
\end{equation}
 there exists a constant $C$ so that for all $T>0,$
\begin{equation}\label{eq:B2}
\|t^\beta\cB f\|_{L^\infty_T(L^q(\Omega))}\leq C\|t^\alpha f\|_{L^r_T(L^p(\Omega))}.
\end{equation}
\item More generally, for any $s\in[r,\infty],$  if  $\frac dp-\frac dq < 1-\frac2{r}+\frac2{s}$
and $(\alpha,\beta)$ satisfy
\begin{equation}\label{eq:B3rel}
\beta=\alpha+\frac d2\biggl(\frac1p-\frac1q\biggr)-\frac12+\frac1r-\frac1s\quad\hbox{and}\quad \alpha r'<1
\end{equation}
 then there exists a constant $C$ so that for all $T>0,$
\begin{equation}\label{eq:B3}
\|t^\beta\cB f\|_{L^s_T(L^q(\Omega))}\leq C\|t^\alpha f\|_{L^r_T(L^p(\Omega))}.
\end{equation}
\end{enumerate}
\end{lem}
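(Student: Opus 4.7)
The backbone of the argument will be the classical $L^p$--$L^q$ bound for the gradient of the semigroup, namely
\begin{equation}\label{eq:Bkernel}
\|\nabla e^{-(t-\tau)A_p}f(\tau)\|_{L^q(\Omega)}\leq C(t-\tau)^{-\sigma}\|f(\tau)\|_{L^p(\Omega)},\qquad
\sigma\eqdefa\frac d2\Bigl(\frac1p-\frac1q\Bigr)+\frac12,
\end{equation}
valid for $1<p\leq q<\infty$ (and $\sigma<1$) both for the heat semigroup on $\R^d$ and for the Stokes semigroup on a $C^2$ bounded domain. Integrating in $\tau$, writing $f(\tau)=\tau^{-\alpha}\,h(\tau)$ with $h(\tau)\eqdefa\tau^\alpha\|f(\tau)\|_{L^p}$, and performing the change of variables $\tau=st$ gives, whenever $\sigma<1$,
\begin{equation}\label{eq:Bscaling}
t^\beta\|\cB f(t)\|_{L^q}\leq C\,t^{\beta-\sigma-\alpha+1}\int_0^1(1-s)^{-\sigma}s^{-\alpha}h(st)\,ds.
\end{equation}
The three relations in \eqref{eq:B1rel}, \eqref{eq:B2rel}, \eqref{eq:B3rel} have been cooked up precisely so that the power of $t$ in front of the integral matches the scaling required by the target $L^s_T$ norm (with $s=r,\infty,s$, respectively).

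My plan is then the following. For part (1), I take the $L^r_T$ norm of \eqref{eq:Bscaling}, apply Minkowski's inequality to pull the $L^r_T$ norm inside the $ds$ integral, and use the fact that $\|h(s\,\cdot)\|_{L^r(0,T)}=s^{-1/r}\|h\|_{L^r(0,sT)}\leq s^{-1/r}\|h\|_{L^r_T}$. This reduces matters to the convergence of $\int_0^1(1-s)^{-\sigma}s^{-\alpha-1/r}\,ds$, which holds as long as $\sigma<1$ (i.e. $d/p-d/q<1$) and $\alpha r'<1$. For parts (2) and (3), I apply instead the Hardy--Young inequality (or a direct Schur test) to the multiplicative convolution in \eqref{eq:Bscaling}: taking the $L^s_T$ norm in $t$ and changing variables, the requisite integral becomes $\int_0^1(1-s)^{-\sigma}s^{-\alpha-1/r+1/s}\,ds$, whose finiteness is precisely encoded in the hypothesis $d/p-d/q<1-2/r+2/s$ together with $\alpha r'<1$.

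The only delicate point is the endpoint $d/p-d/q=1$ in part (1), for which \eqref{eq:Bkernel} degenerates ($\sigma=1$) and the scheme above diverges. Here I would argue differently: by the maximal regularity Lemma \ref{l:A} (or rather its consequence \eqref{eq:cA} for the Stokes semigroup), one controls $t^\alpha\nabla^2\cB f$ in $L^r_T(L^p)$, and the Sobolev embedding $W^{1,p}\hookrightarrow L^q$ (which holds exactly when $d/p-d/q=1$, modulo the usual extension arguments used throughout the paper for the half-space and for $C^2$ bounded domains) then upgrades this to the desired $L^r_T(L^q)$ bound on $t^\alpha\nabla\cB f$, with $\beta=\alpha$ matching \eqref{eq:B1rel} in that case. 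I expect this endpoint to be the main obstacle, since outside of it everything reduces to a bookkeeping exercise on the weighted kernel, whereas here one has to combine two distinct pieces of semigroup theory (maximal regularity and Sobolev embedding) to bypass the logarithmic divergence of the naive kernel estimate.
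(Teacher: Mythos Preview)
Your handling of part~(1) via Minkowski after the substitution $\tau=ut$, and of the endpoint $\frac dp-\frac dq=1$ via Lemma~\ref{l:A} plus Sobolev embedding, is exactly what the paper does. For part~(2) the paper applies H\"older directly in $\tau$ to the pointwise bound, which produces $\sigma r'<1$; your route would work as well.

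The only real issue is in part~(3). The integral you write, $\int_0^1(1-u)^{-\sigma}u^{-\alpha-1/r+1/s}\,du$, converges iff $\sigma<1$ and $\alpha<\frac1{r'}+\frac1s$, which is \emph{not} the hypothesis of the lemma (that hypothesis reads $\sigma<\frac1{r'}+\frac1s$ and $\alpha<\frac1{r'}$). In particular your condition on $\sigma$ is too weak: for $s=\infty$ it would allow any $\sigma<1$, whereas the fractional integral $I_{1-\sigma}$ fails to map $L^r$ into $L^\infty$ once $\sigma\geq\frac1{r'}$. So the integral you display does not arise from a correct application of Hardy--Young.

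That said, your strategy can be made to work. Setting $H(t)=t^{1/s}\|\cB f(t)\|_{L^q}$ and $G(\tau)=\tau^{1/r}F(\tau)$ (with $F$ as in your $h$) turns the pointwise bound into a genuine multiplicative convolution $H\leq k*G$ on $(\R_+,\frac{dv}v)$ with kernel $k(v)=v^{\beta+1/s}(v-1)^{-\sigma}\mathbf 1_{\{v>1\}}$. Young's inequality on this group gives $\|H\|_{L^s(dv/v)}\leq\|k\|_{L^a(dv/v)}\|G\|_{L^r(dv/v)}$ with $\frac1a=\frac1{r'}+\frac1s$, and $\|k\|_{L^a}<\infty$ is exactly $a\sigma<1$ (near $v=1$) and $\beta+\frac1s<\sigma$ (near $v=\infty$), i.e.\ $\sigma<\frac1{r'}+\frac1s$ and $\alpha r'<1$. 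This is a clean alternative to the paper's argument, which instead splits the kernel as $(t-\tau)^{-\sigma\varphi}(t-\tau)^{-\sigma(1-\varphi)}$, applies H\"older with exponent $(\frac sr)'$ to one factor and Minkowski to the other, and then optimises over $\varphi$; both routes give the same threshold, but yours is shorter once the convolution structure is exhibited.
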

\begin{proof}
The limit case $\beta=\alpha$ of the first inequality is a consequence
of Lemma \ref{l:A} and Sobolev embedding.
To treat the case $\beta<\alpha,$ the starting point is the following inequality
\begin{equation}\label{eq:lplqgrad}
\|\nabla e^{-t A_p}f\|_{L^q(\Omega)}\leq Ct^{-\delta}\|f\|_{L^p(\Omega)}
\quad\hbox{with}\quad \delta\eqdefa\frac d2\biggl(\frac1p-\frac1q\biggr)+\frac12
\end{equation}
which holds true whenever $1<p\leq q<\infty.$ It has been proved in \cite{Giga} for the Stokes operator in bounded domains,
and follows from an explicit computation for the heat operator in $\R^d.$
\medbreak
This inequality obviously implies that
\begin{equation}\label{eq:B4}
t^\beta\|\cB f(t)\|_{L^q(\Omega)}\leq Ct^\beta\int_0^t (t-\tau)^{-\delta}\tau^{-\alpha}
F(\tau)\,d\tau\quad\hbox{with }\
F(\tau)\eqdefa\|\tau^\alpha f(\tau)\|_{L^p(\Omega)}.
\end{equation}
Therefore, making a change of variables, and using the relationship between $\alpha$ and $\beta,$
$$
t^\beta\|\cB f(t)\|_{L^q(\Omega)}\leq C\int_0^1(1-\tau')^{-\delta}(\tau')^{-\alpha}
 F(\tau' t)\,d\tau'.
 $$
 Now, taking the $L^r_T$ norm of both sides and applying Minkowski inequality implies that
 $$
 \|t^\beta\cB f\|_{L^r_T(L^q(\Omega))} \leq C\int_0^1 (1-\tau')^{-\delta}(\tau')^{-\alpha}
 \biggl(\int_0^T F^r(\tau't)\,dt\biggr)^{\frac1r}\,d\tau'.
 $$
 Then arguing exactly as in the proof of the previous lemma, we get Inequality \eqref{eq:B1} whenever  \eqref{eq:B1rel}
 is satisfied.
 \medbreak
 In order to establish \eqref{eq:B2},  we start from \eqref{eq:B4} and apply H\"older inequality.
 We obtain for all $t\in[0,T],$
 $$
 t^\beta\|\cB f(t)\|_{L^q(\Omega)}\leq Ct^\beta \biggl(\int_0^t(t-\tau)^{-r'\delta}\tau^{-\alpha r'}\,d\tau\biggr)^{\frac1{r'}}
 \|F\|_{L^r(0,T)}.
 $$
 Then  making the usual change of variable and taking advantage of the relationship between $\alpha$ and $\beta,$ and of the
 definition of $F,$ we get for all $t\in[0,T],$
 $$
 t^\beta\|\cB f(t)\|_{L^q(\Omega)}\leq C \biggl(\int_0^t(1-\tau')^{-r'\delta}(\tau')^{-\alpha r'}\biggr)^{\frac1{r'}}
 \|t^\alpha f\|_{L^r_T(L^p(\Omega))}.
 $$
 It is now clear that we get Inequality \eqref{eq:B2} under the constraints of \eqref{eq:B2rel}
 and $\delta r'<1.$
 \medbreak
 In order to treat the general case, we have to combine the methods for proving \eqref{eq:B1} and \eqref{eq:B2}.
 Starting from \eqref{eq:B4}, we write
 $$
I\eqdefa \|t^\beta\cB f(t)\|_{L^s_T(L^q(\Omega))}\leq C\biggl(\int_0^Tt^{\beta s}\biggl(\int_0^t(t-\tau)^{-\delta\varphi}
 \:(t-\tau)^{-\delta(1-\varphi)} F(\tau)\tau^{-\alpha}\,d\tau\biggr)^sdt\biggr)^{\frac1s}
 $$
 where $\varphi$ is a parameter in $[0,1],$ to be fixed hereafter.
 \medbreak
 Applying H\"older inequality to the inner integral, we get (with obvious notation)
 $$
 I^s\leq C\int_0^Tt^{\beta s}\biggl(\int_0^t(t-\tau)^{-\delta\varphi\frac sr}F(\tau)\tau^{-\alpha}\,d\tau\biggr)^r
 \biggl(\int_0^t(t-\tau)^{-\delta(1-\varphi)(\frac sr)'} F(\tau)\tau^{-\alpha}\,d\tau\biggr)^{s-r}dt\cdotp
 $$
 Applying again H\"older inequality, in the last integral only, we thus find that the above r.h.s. is bounded by
  $$
 \int_0^T\!t^{\beta s}\biggl(\int_0^t(t-\tau)^{-\delta\varphi\frac sr}F(\tau)\tau^{-\alpha}\,d\tau\biggr)^r
 \biggl(\int_0^t\! F^r(\tau)\,d\tau\biggr)^{\frac sr-1}
 \!\biggl(\int_0^t(t-\tau)^{-\delta(1-\varphi)(\frac sr)'r'} \tau^{-\alpha r'}\,d\tau\biggr)^{\frac{s-r}{r'}}dt\cdotp
 $$
 Then we perform the same change of variables as above to get
 $$
 I^s\leq C\|F\|_{L^r(0,T)}^{s-r}\int_0^T\biggl(\int_0^1(1-\tau)^{-\delta\varphi\frac sr}F(t\tau)\tau^{-\alpha}\,d\tau\biggr)^r
 \biggl(\int_0^1(1-\tau)^{-\delta(1-\varphi)(\frac sr)'r'} \tau^{-\alpha r'}\,d\tau\biggr)^{\frac{s-r}{r'}}dt
 $$
  where we have used the fact that $\beta=\alpha+\delta-1/r'-1/s.$
  If we assume that $\alpha r'<1$ and that $\delta(1-\varphi)(\frac sr)'r'<1$ then
  the last integral is bounded.
  Then applying Minkowski inequality to swap the integrals on $[0,T]$ and on $[0,1],$ we eventually get
  $$
  I\leq C\|F\|_{L^r(0,T)}^{1-\frac rs}
  \biggl(\int_0^1(1-\tau)^{-\delta\varphi\frac sr}\tau^{-\alpha}\biggl(\int_0^T F^r(t\tau)\,dt\biggr)^{\frac1r}\,d\tau\biggr)^{\frac rs},
 $$
 whence
 $$
 \|t^\beta\cB f(t)\|_{L^s_T(L^q(\Omega))}\leq C\|F\|_{L^r(0,T)}^{1-\frac rs}
   \biggl(\int_0^1(1-\tau)^{-\delta\varphi\frac sr}\tau^{-\alpha-\frac1r}
   \biggl(\int_0^{\tau T} F^r(t')\,dt'\biggr)^{\frac1r}\,d\tau\biggr)^{\frac rs},
 $$
 which implies Inequality \eqref{eq:B3} provided $\alpha+1/r<1$ and $\delta\varphi s/r<1.$
 In order to complete the proof, it is only a matter of
 taking $\varphi=\frac{r^2}{r+sr-s}$ so that the conditions  $\delta(1-\varphi)(\frac sr)'r'<1$
 and  $\delta\varphi s/r<1$ are equivalent.
 \end{proof}

Finally we need a lemma involving the following operator
$$
\cC: f\longmapsto \biggl[t\mapsto \int_0^te^{-(t-\tau)A_p}f(\tau)\,d\tau\biggr]\cdotp
$$
\begin{lem}\label{l:C} Let $1<p,q,r<\infty$ with $q\geq p.$
\begin{enumerate}
\item    If in addition $\frac dp-\frac dq\leq2$ then for all real numbers $\alpha$ and $\gamma$ satisfying
\begin{equation}\label{eq:C1rel}
\gamma=\alpha+\frac d2\biggl(\frac1p-\frac1q\biggr)-1\quad\hbox{and}\quad \alpha r'<1
\end{equation}
 there exists a constant $C$ so that for all $T>0,$
\begin{equation}\label{eq:C1}
\|t^\gamma\cC f\|_{L^r_T(L^q(\Omega))}\leq C\|t^\alpha f\|_{L^r_T(L^p(\Omega))}.
\end{equation}
\item If in addition $\frac dp-\frac dq<2-\frac 2{r}$ then
 for all real numbers $\alpha$ and $\gamma$ satisfying
\begin{equation}\label{eq:C2rel}
\gamma=\alpha+\frac d2\biggl(\frac1p-\frac1q\biggr)-\frac1{r'}\quad\hbox{and}\quad \alpha r'<1
\end{equation}
 there exists a constant $C$ so that for all $T>0,$
\begin{equation}\label{eq:C2}
\|t^\gamma\cC f\|_{L^\infty_T(L^q(\Omega))}\leq C\|t^\alpha f\|_{L^r_T(L^p(\Omega))}.
\end{equation}
\item More generally, for any $s\in[r,\infty],$  if  $\frac dp-\frac dq<2-\frac2{r}+\frac2{s}$
and $(\alpha,\gamma)$ satisfy
\begin{equation}\label{eq:C3rel}
\gamma=\alpha+\frac d2\biggl(\frac1p-\frac1q\biggr)-1+\frac1r-\frac1s\quad\hbox{and}\quad \alpha r'<1
\end{equation}
then  there exists a constant $C$ so that for all $T>0,$
\begin{equation}\label{eq:C3}
\|t^\gamma\cC f\|_{L^s_T(L^q(\Omega))}\leq C\|t^\alpha f\|_{L^r_T(L^p(\Omega))}.
\end{equation}
\end{enumerate}
\end{lem}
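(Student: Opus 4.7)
The plan is to mimic the proof of Lemma \ref{l:B}, replacing the gradient estimate \eqref{eq:lplqgrad} by the standard smoothing estimate
\begin{equation}\label{eq:Csmoothing}
\|e^{-tA_p}f\|_{L^q(\Omega)}\leq Ct^{-\delta}\|f\|_{L^p(\Omega)},\qquad \delta\eqdefa\frac{d}{2}\biggl(\frac1p-\frac1q\biggr),
\end{equation}
which holds for $1<p\leq q<\infty$ both in the heat case on $\R^d$ (explicit computation) and in the Stokes case on a bounded $C^2$ domain (see \cite{Giga}). Note that the exponent $\delta$ here differs from the one in Lemma \ref{l:B} precisely by $\frac12,$ which explains the shift by $\frac12$ in the various relations \eqref{eq:C1rel}, \eqref{eq:C2rel}, \eqref{eq:C3rel} compared with \eqref{eq:B1rel}, \eqref{eq:B2rel}, \eqref{eq:B3rel}.

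First, for (1), setting $F(\tau)\eqdefa \|\tau^\alpha f(\tau)\|_{L^p(\Omega)},$ \eqref{eq:Csmoothing} gives
\begin{equation}\label{eq:Cbound}
t^\gamma\|\cC f(t)\|_{L^q(\Omega)}\leq C t^\gamma\int_0^t(t-\tau)^{-\delta}\tau^{-\alpha}F(\tau)\,d\tau.
\end{equation}
After the change of variables $\tau=t\tau'$ and using $\gamma=\alpha+\delta-1,$ the right-hand side becomes $\int_0^1(1-\tau')^{-\delta}(\tau')^{-\alpha}F(t\tau')\,d\tau'.$ Taking the $L^r_T$ norm and applying Minkowski's inequality then yields \eqref{eq:C1}, provided the integrals $\int_0^1(1-\tau')^{-\delta}(\tau')^{-\alpha-1/r}\,d\tau'$ converge, which is guaranteed by $\delta\leq 1$ (i.e.\ $d/p-d/q\leq 2$) and $\alpha r'<1.$ The limit case $\delta=1,$ i.e.\ $d/p-d/q=2,$ should be handled by Sobolev embedding combined with \eqref{eq:cA}.

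For (2), I would start again from \eqref{eq:Cbound} but apply H\"older's inequality directly in $\tau$ with exponent $r,$ exploiting $F\in L^r(0,T)$: one finds
$$
t^\gamma\|\cC f(t)\|_{L^q(\Omega)}\leq Ct^\gamma \biggl(\int_0^t(t-\tau)^{-\delta r'}\tau^{-\alpha r'}\,d\tau\biggr)^{1/r'}\|F\|_{L^r(0,T)}.
$$
The change of variables $\tau=t\tau'$ combined with $\gamma=\alpha+\delta-1/r'$ reveals that the right-hand side is bounded by $C\|F\|_{L^r(0,T)}$ as long as $\delta r'<1$ and $\alpha r'<1,$ which is exactly the hypothesis $d/p-d/q<2-2/r$ together with $\alpha r'<1.$

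For (3), the natural approach is to interpolate the two endpoint cases $s=r$ (part (1)) and $s=\infty$ (part (2)). However, since the compatibility conditions on $(p,q,\alpha,\gamma)$ change with $s,$ a direct interpolation is delicate; instead I would follow the splitting argument used at the end of the proof of Lemma~\ref{l:B}, introducing a parameter $\varphi\in[0,1]$ to split $(t-\tau)^{-\delta}$ into $(t-\tau)^{-\delta\varphi}(t-\tau)^{-\delta(1-\varphi)},$ then applying H\"older inequality twice (first with exponents $(s/r, (s/r)')$ in the inner integral, then $(r,r')$) and Minkowski to swap the $t$ and $\tau'$ integrations. The admissible exponents turn out to be constrained by $\delta\varphi\, s/r<1$ and $\delta(1-\varphi)(s/r)'r'<1,$ and the optimal choice $\varphi=r^2/(r+sr-s)$ makes both conditions equivalent to $\delta<1-2/r+2/s,$ i.e.\ $d/p-d/q<2-2/r+2/s.$ The ensuing computation, together with $\gamma=\alpha+\delta-1/r'-1/s$ and $\alpha r'<1,$ yields \eqref{eq:C3}. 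The main obstacle here will be the bookkeeping for the double H\"older/Minkowski argument; this is routine but must be done carefully to verify that the relation between $\gamma$ and $\alpha$ in \eqref{eq:C3rel} is indeed the one that comes out of the change of variables $\tau=t\tau'.$
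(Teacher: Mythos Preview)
Your proposal is correct and follows essentially the same route as the paper's proof: the key smoothing estimate \eqref{eq:Csmoothing} you invoke is exactly the paper's \eqref{eq:lplq}, and the change of variables plus Minkowski/H\"older arguments for parts (1) and (2) match the paper line by line, while for (3) the paper simply says ``similar arguments'' and leaves the details to the reader---your $\varphi$-splitting is precisely what is intended. The only cosmetic difference is in the endpoint $\delta=1$ of part (1): the paper handles $\gamma=\alpha$ via Lemma~\ref{l:B} and Sobolev embedding, whereas you go through \eqref{eq:cA} (i.e.\ Lemma~\ref{l:A}) and Sobolev; both routes are valid and equivalent.
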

\begin{proof}
The limit case $\gamma=\alpha$ of the first inequality is a consequence
of Lemma \ref{l:B} and Sobolev embedding.
To treat the case $\gamma<\alpha,$ the starting point is the following inequality
\begin{equation}\label{eq:lplq}
\|e^{-tA_p}f\|_{L^q(\Omega)}\leq Ct^{-\delta}\|f\|_{L^p(\Omega)}\quad\hbox{with}\quad
\delta\eqdefa \frac d2\biggl(\frac1p-\frac1q\biggr)
\end{equation}
whenever $1<p\leq q<\infty,$
which has been proved in \cite{Giga} for the Stokes operator in bounded domains,
and follows from an explicit computation for the heat operator in $\R^d.$
\medbreak
This inequality obviously implies that
\begin{equation}\label{eq:C4}
t^\gamma\|\cC f(t)\|_{L^q(\Omega)}\leq Ct^\gamma\int_0^t (t-\tau)^{-\delta}\tau^{-\alpha}
F(\tau)\,d\tau\quad\hbox{with }\ F(\tau)\eqdefa\|\tau^\alpha f(\tau)\|_{L^p(\Omega)}.
\end{equation}
Therefore, making a change of variables, and using the relationship between $\alpha$ and $\gamma,$
$$
t^\gamma\|\cB f(t)\|_{L^q(\Omega)}\leq C\int_0^1(1-\tau')^{-\delta}(\tau')^{-\alpha}F(\tau't)\,d\tau'.
 $$
 Now, taking the $L^r_T$ norm of both sides and applying Minkowski inequality as in the above lemmas
 yields  Inequality \eqref{eq:C1}.
 \medbreak
 In order to establish \eqref{eq:C2},  we start from \eqref{eq:C4} and apply  H\"older inequality.
 We obtain for all $t\in[0,T],$
 $$
 t^\gamma\|\cB f(t)\|_{L^q(\Omega)}\leq Ct^\gamma \biggl(\int_0^t(t-\tau)^{-r'\delta}\tau^{-\alpha r'}\,d\tau\biggr)^{\frac1{r'}}
 \|F\|_{L^r(0,T)}.
 $$
 Then arguing as in the previous lemma easily leads to
 \eqref{eq:C2} under the constraints of \eqref{eq:C2rel}.
The general case $s\geq r$ follows from similar arguments. The details are left to the reader.
\end{proof}

We finally  want to establish  decay estimates for the free solution
to heat equation in the whole space.
\begin{lem}\label{l:D}
Assume that $u_0\in\dot B^{s}_{p,r}(\R^d)$ with $1\leq p,r\leq\infty.$ The following
inequalities hold true:
\begin{enumerate}
\item If  $s<2$ then
\begin{equation}\label{eq:D1}
 \|t^\alpha\nabla^2 e^{t\Delta} u_0\|_{L^r(\R_+;L^{p}(\R^d))}\leq C\|u_0\|_{\dot B^{s}_{p,r}(\R^d)}
 \quad\hbox{with }\ \alpha\eqdefa 1-\f s2-\f1r\cdotp
 \end{equation}
 \item If  $s<1$  then
\begin{equation}\label{eq:D2}
 \|t^\beta\nabla e^{t\Delta} u_0\|_{L^r(\R_+;L^{p}(\R^d))}\leq C\|u_0\|_{\dot B^{s}_{p,r}(\R^d)}
  \quad\hbox{with }\ \beta\eqdefa \f12-\f s2-\f1r\cdotp
 \end{equation}\item If  $s<0$  then
\begin{equation}\label{eq:D3}
 \|t^\gamma e^{t\Delta} u_0\|_{L^r(\R_+;L^{p}(\R^d))}\leq C\|u_0\|_{\dot B^{s}_{p,r}(\R^d)}
  \quad\hbox{with }\ \gamma\eqdefa -\f s2-\f1r\cdotp
 \end{equation}
\end{enumerate}
\end{lem}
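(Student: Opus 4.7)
The three estimates should reduce to a single observation: the spatial derivatives commute with the heat semigroup (both being Fourier multipliers), so
\[
\nabla^k e^{t\Delta} u_0 = e^{t\Delta}(\nabla^k u_0) \qquad (k=0,1,2),
\]
and the standard lifting property of homogeneous Besov spaces yields
\[
\|\nabla^k u_0\|_{\dot B^{s-k}_{p,r}(\R^d)} \le C\|u_0\|_{\dot B^{s}_{p,r}(\R^d)}.
\]
Thus if I can match the weight in $t$ with a negative Besov index, Proposition \ref{prop3.1} (applied with $\mu=1$) will close the estimate.

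The plan for item (1) is to rewrite the Lebesgue measure as $dt = t\,\frac{dt}{t}$, which turns the $L^r(\R_+,dt)$ norm into an $L^r(\R_+,dt/t)$ norm with weight $t^{\alpha+1/r}$; here $\alpha+\frac1r = 1-\frac s2 = -\frac{s-2}{2}$. Since $s<2$, the index $s-2$ is negative, so Proposition \ref{prop3.1} gives
\[
\bigl\|t^{-(s-2)/2}\,e^{t\Delta}(\nabla^2 u_0)\bigr\|_{L^r(\R_+,dt/t;L^p)} \le C\|\nabla^2 u_0\|_{\dot B^{s-2}_{p,r}(\R^d)},
\]
and the lifting inequality concludes the proof. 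Items (2) and (3) are handled identically: for (2) one matches $\beta+\frac1r=-\frac{s-1}{2}$ with the Besov index $s-1<0$ applied to $\nabla u_0$, and for (3) one matches $\gamma+\frac1r=-\frac{s}{2}$ with the index $s<0$ applied to $u_0$ itself. In each case the commutation, the lifting, and Proposition \ref{prop3.1} compose directly with no integral manipulation.

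There is essentially no obstacle: the only point to verify is that the commutation $\nabla^k e^{t\Delta}u_0 = e^{t\Delta}\nabla^k u_0$ is legitimate for distributions in $\dot B^s_{p,r}(\R^d)$ (which follows because both operators act as Fourier multipliers on each Littlewood--Paley block $\dot\Delta_j u_0$, and the decay condition \eqref{eq:lf} is preserved), and that Proposition \ref{prop3.1} covers the endpoint $r=\infty$ case needed if one wants the full generality $1\le p,r\le\infty$ stated in the lemma. The latter is immediate since Proposition \ref{prop3.1} is stated without restriction on $p,r\in[1,\infty]^2$.
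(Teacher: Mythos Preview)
Your proof is correct and follows essentially the same approach as the paper: commute the derivatives with the heat semigroup, use the lifting property $\nabla^k u_0\in\dot B^{s-k}_{p,r}$, convert $L^r(\R_+,dt)$ to $L^r(\R_+,dt/t)$ via $dt=t\,\frac{dt}{t}$ so that the weight becomes $t^{-(s-k)/2}$, and invoke Proposition~\ref{prop3.1} with the negative index $s-k$. The paper's proof is slightly terser but identical in substance.
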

\begin{proof}
The assumption ensures that $\nabla^2 u_0\in \dot B^{s-2}_{p,r}(\R^d).$
Because  $s-2<0,$ Proposition \ref{prop3.1} yields
$$
  \|t^{1-\frac s2-\frac1r} e^{t\Delta}\nabla^2 u_0\|_{L^r(\R_+;L^p(\R^d))}
  =   \|t^{1-\frac s2} \|e^{t\Delta}\nabla^2 u_0\|_{L^p(\R^d)}\|_{L^r(\R_+;\frac{dt}t)}
\approx   \|\nabla^2 u_0\|_{\dot B^{s-2}_{p,r}(\R^d)}.
$$
The proof of the other inequalities is totally similar.
\end{proof}
\begin{rmk} {\sl In this paper, we mainly consider the case where $s=-1+\f dp\cdotp$
The corresponding values of $(\alpha,\beta,\gamma)$ are
$$\alpha= \frac32-\frac d{2p}-\frac1r \ \hbox{ if }\  p>\f d3,\quad\
\beta= 1-\frac d{2p}-\frac1r\ \hbox{ if }\ p>\f d2,\quad\
\gamma= \frac12-\frac d{2p}-\frac1r\ \hbox{ if }\ p>d.$$}
\end{rmk}

\begin{prop}\label{p:stokesconv}
Let  $u_0$ be in $\dot\cB^{-1+\f dp}_{p,r}(\R^d_+)\cap \dot\cB^{-1+\f dp}_{\wt p,r}(\R^d_+)$
 and  $f\in L^r(\R_+;L^p_+\cap L^{\wt p}_+)$ with $p=\f{dr}{3r-2},$
$r\in(1,\infty)$ and $d<\wt p\leq\f{dr}{r-1}\cdotp$
Let the function $v^d$ be in $L^{2r}(\R_+;L^{\f{dr}{2r-1}}_+\cap L^\alpha_+)$ with $\f1\alpha\eqdefa\f1{\wt p}-\f{r-1}{dr}\cdotp$

 Then  the following system:
\begin{equation}\label{eq:stokesconv}
 \quad\left\{\begin{array}{l}
\displaystyle \pa_t u^{h}+v^{d}\partial_du^{h}-\mu\D u^{h}
+\na_h\Pi=f^{h}\\
\displaystyle \pa_t u^{d}+u^{h}\cdot\nabla_hv^{d}-v^{d}\dive_hu^{h}-\mu\D u^{d}
+\partial_d\Pi=f^{d}\\
\displaystyle \dive\, u = 0, \\
\displaystyle u|_{\partial\R^d_+}=0,\\
 \displaystyle u|_{t=0}=u_{0},
\end{array}\right.
\end{equation}
admits a unique solution $(u,\nabla\Pi)$ in $X^{p,r}\cap X^{\wt p,r}.$
Furthermore, there exist two positive constants $\la_0$ and $C$ such that the following inequalities hold true:
\begin{eqnarray}\label{eq:aux1}
&&\|\wt u_\la^{h}\|_{\mathfrak{X}^{p,r}_t}+\mu^{1-\f1{2r}}\|\na\wt u_\la^{h}\|_{L^{2r}_t(L^{\f{dr}{2r-1}}_+)}\leq
 C\bigl(\mu^{1-\f1r}\|u_0^h\|_{\dot
B^{-1+\f{d}p}_{p,r}(\R^d_+)}+\|\wt f_\la\|_{L^{r}_t(L^{p}_+)}\bigr),\\\label{eq:aux2}
&&\|\wt u^{h}_\la\|_{\mathfrak{X}^{\wt p,r}_t}+\mu^{1-\f1{2r}}\|\na\wt u_\la^{h}\|_{L^{2r}_t(L^{\alpha}_+)}
\leq
 C\Bigl(\mu^{1-\f1r}\|u_0^h\|_{\dot
B^{-1+\f{d}p}_{\wt p,r}(\R^d_+)}+\|\wt f_\la\|_{L^{r}_t(L^{\wt p}_+)}\\&&\hspace{7cm}+\|\nabla v^d\|_{L_t^{2r}(L^\alpha_+)}\|\nabla\wt u^h_\la\|_{L^{2r}_t(L^{\f{dr}{2r-1}}_+)}\Bigr),\nonumber
\end{eqnarray}
with $\,\displaystyle (\wt u_\lambda,\nabla\wt\Pi_\lambda,\wt f_\lambda)(t)\eqdefa\exp\Bigl(-\la\mu^{1-2r}\int_0^t\|\nabla v^d\|_{L^{\f{dr}{2r-1}}}^{2r}\,d\tau\Bigr)
(u,\nabla\Pi,f)(t)$ and $\lambda\geq \la_0.$
\end{prop}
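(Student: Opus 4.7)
The plan is to recast \eqref{eq:stokesconv} as the Stokes system \eqref{eq:stokes} with $g\equiv0$ and source $F = f - (v^d\partial_d u^h,\, u^h\cdot\nabla_h v^d - v^d\dive_h u^h)$, so as to reduce everything to Proposition \ref{p:stokes}. Local existence and uniqueness of $(u,\nabla\Pi)\in X^{p,r}_T\cap X^{\wt p,r}_T$ then follow from a Banach fixed-point argument applied to the map $(u,\nabla\Pi)\mapsto$ (Stokes solution with source $F(u)$); the convection is controlled by H\"older combined with the Sobolev embedding $W^{1,\frac{dr}{2r-1}}_0(\R^d_+)\hookrightarrow L^{\frac{dr}{r-1}}(\R^d_+)$ (and its $\wt p$-analogue) to handle the factors $\|v^d\|$ or $\|u^h\|$. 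Globalisation will come from the a priori bounds \eqref{eq:aux1}--\eqref{eq:aux2}.

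Setting $(\wt u_\lambda,\nabla\wt\Pi_\lambda,\wt f_\lambda)(t) = h_\lambda(t)(u,\nabla\Pi,f)(t)$ and using $h_\lambda'(t) = -\lambda\mu^{1-2r}\|\nabla v^d(t)\|^{2r}_{L^{dr/(2r-1)}_+}h_\lambda(t)$, a direct computation shows that $(\wt u_\lambda,\nabla\wt\Pi_\lambda)$ satisfies the Stokes system \eqref{eq:stokes} with $g\equiv 0$, initial data $u_0$, and source
$$
\wt G^h_\lambda = \wt f^h_\lambda - v^d\partial_d\wt u^h_\lambda - \lambda\mu^{1-2r}\|\nabla v^d\|^{2r}_{L^{dr/(2r-1)}_+}\wt u^h_\lambda,
$$
with an analogous $\wt G^d_\lambda$ additionally containing $-\wt u^h_\lambda\cdot\nabla_h v^d + v^d\dive_h\wt u^h_\lambda$. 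To isolate the $\|u_0^h\|$ (not $\|u_0\|$) dependence on the right-hand side of \eqref{eq:aux1}, I would decompose $\wt u_\lambda$ into free and source-driven parts: by formula \eqref{add.2} from the remark after Theorem \ref{th2.1} --- whose derivation uses crucially both $\dive u_0 = 0$ and $\gamma u_0^d = 0$ --- the horizontal component of the free Stokes flow is expressible purely in terms of $u_0^h$ through the heat semigroup and the bounded Fourier multipliers of Lemma \ref{l:extension}. Combining with Corollary \ref{c:besov}, Lemma \ref{lem3.1}, and the pointwise bound $h_\lambda\leq1$ yields the free horizontal contribution $\leq C\mu^{1-1/r}\|u_0^h\|_{\dot B^{-1+d/p}_{p,r}(\R^d_+)}$, while Proposition \ref{p:stokes} controls the source-driven part in terms of $\|\wt G_\lambda\|_{L^r_t(L^p_+)}$.

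The main obstacle is the absorption of the two $v^d$-dependent pieces of $\wt G^h_\lambda$: H\"older yields
$\|v^d\partial_d\wt u^h_\lambda\|_{L^r_t(L^p_+)}\leq C\|\nabla v^d\|_{L^{2r}_t(L^{dr/(2r-1)}_+)}\|\nabla\wt u^h_\lambda\|_{L^{2r}_t(L^{dr/(2r-1)}_+)}$, which is not small in general, and a similar bound holds for the penalty term. The whole point of introducing $h_\lambda$ is the Chemin--Gallagher-type identity
$$
\int_0^t\mu^{1-2r}\|\nabla v^d(\tau)\|^{2r}_{L^{dr/(2r-1)}_+}\,h_\lambda^{2r}(\tau)\,d\tau \leq \frac{1-h_\lambda^{2r}(t)}{2r\lambda}\leq\frac{1}{2r\lambda},
$$
which, after rewriting the convective and penalty contributions in terms of weighted norms and applying Cauchy--Schwarz, majorises their sum by $C\lambda^{-1/(2r)}$ times the left-hand side of \eqref{eq:aux1}. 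Choosing $\lambda\geq\lambda_0$ sufficiently large then completes the absorption. Estimate \eqref{eq:aux2} follows from the same scheme with $\wt p$ in place of $p$; the H\"older split $\frac{1}{\wt p} = \frac{1}{\alpha} + \frac{r-1}{dr}$ forces the cross-term $\|\nabla v^d\|_{L^{2r}_t(L^\alpha_+)}\|\nabla\wt u^h_\lambda\|_{L^{2r}_t(L^{dr/(2r-1)}_+)}$ to appear on the right of \eqref{eq:aux2}, since at the $\wt p$ level no embedding converts $\nabla v^d\in L^\alpha_+$ to a norm fully absorbable by the exponential weight.
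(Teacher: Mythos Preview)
Your overall strategy --- fixed point for existence and the exponential weight for the a~priori bounds --- is right, and you correctly identify formula \eqref{add.2} as the device that makes only $\|u_0^h\|$ appear on the right of \eqref{eq:aux1}. But the absorption step, as you describe it, does not close.

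The problem is that you apply Proposition \ref{p:stokes} as a black box to the equation satisfied by $\wt u_\lambda$, with source $\wt G_\lambda^h=\wt f_\lambda^h-v^d\partial_d\wt u_\lambda^h-\lambda\mu^{1-2r}\|\nabla v^d\|^{2r}_{L^{dr/(2r-1)}_+}\,\wt u_\lambda^h$. Two things go wrong. First, the penalty term is not in $L^r_t(L^p_+)$: bounding $\bigl\|\,\|\nabla v^d\|^{2r}\wt u_\lambda^h\,\bigr\|_{L^r_t(L^p_+)}$ would force $\nabla v^d\in L^{2r^2}_t$, whereas the hypothesis only gives $L^{2r}_t$; moreover this term carries a prefactor $\lambda$, not $\lambda^{-1/(2r)}$, so it gets \emph{worse} as $\lambda\to\infty$. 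Second, once you have passed to $\wt u_\lambda^h$, the convection term $v^d\partial_d\wt u_\lambda^h$ no longer carries any spare $h_\lambda$ factor: H\"older plus Cauchy--Schwarz gives only
\[
\|v^d\partial_d\wt u_\lambda^h\|_{L^r_t(L^p_+)}\leq C\,\|\nabla v^d\|_{L^{2r}_t(L^{\frac{dr}{2r-1}}_+)}\,\|\nabla\wt u_\lambda^h\|_{L^{2r}_t(L^{\frac{dr}{2r-1}}_+)},
\]
with no $\lambda$-smallness at all. Your displayed identity involves $h_\lambda^{2r}(\tau)$, but after the substitution $\wt u_\lambda=h_\lambda u$ that factor has already been consumed; there is nothing left to integrate against $\|\nabla v^d\|^{2r}$.

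The paper avoids this by \emph{not} writing a PDE for $\wt u_\lambda$. Instead it inserts the \emph{two-parameter} weight $h_\lambda(\tau,t)=\exp\bigl(-\lambda\mu^{1-2r}\int_\tau^t\|\nabla v^d\|^{2r}\bigr)$ directly into the Duhamel formula \eqref{2.5}, and then reruns the computations of Proposition \ref{p:stokes} on that weighted representation. The source term then appears as $\bigl(\int_0^t h_\lambda(\tau,t)^r\|\wt g_\lambda(\tau)\|_{L^p_+}^r\,d\tau\bigr)^{1/r}$, and the extra factor $h_\lambda(\tau,t)$ is exactly what makes the Cauchy--Schwarz step work:
\[
\int_0^t h_\lambda(\tau,t)^{2r}\,\mu^{1-2r}\|\nabla v^d(\tau)\|^{2r}_{L^{\frac{dr}{2r-1}}_+}\,d\tau\;\leq\;\frac{1}{2r\lambda},
\]
whence the convective contribution is bounded by $(2\lambda r)^{-1/(2r)}\mu^{1-1/(2r)}\|\nabla\wt w_\lambda^h\|_{L^{2r}_t}$ and can be absorbed. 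No penalty term ever arises. If you want to keep your PDE-for-$\wt u_\lambda$ viewpoint, you must go back inside the proof of Proposition \ref{p:stokes} and carry the weight through the maximal-regularity estimate, rather than invoke the proposition as a closed statement.
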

\begin{proof}
We introduce the map $\Psi:w\mapsto u$ where $w$ stands for the solution to
\begin{equation}\label{eq:stokesconvbis}
 \quad\left\{\begin{array}{l}
\displaystyle \pa_t u-\mu\D u +\na\Pi=g,\\
\displaystyle \dive\, u = 0, \\
\displaystyle u|_{\partial\R^d_+}=0,\\
 \displaystyle u|_{t=0}=u_{0},
\end{array}\right.
\end{equation}
with $g\eqdefa (g^h,g^d)=(f^h-v^{d}\partial_dw^{h},f^d-w^{h}\cdot\nabla_hv^{d}+v^{d}\dive_hw^{h}).$
\medbreak
We claim that $\Psi$  admits a unique fixed point in $X^{p,r}\cap X^{\wt p,r}.$ To prove it,
we fix some parameter $\la>0$ and set
$$
h_\la(s,t)\eqdefa\exp\biggl(-\la\mu^{1-2r}\int_s^t  \|\na v^{d}(\tau)\|_{L^{\f{dr}{2r-1}}_+\cap L^\alpha_+}^{2r}\,d\tau\biggr)\cdotp
$$
 Then it follows from \eqref{2.5}  that
\begin{equation}\label{eq:aux5a}
\wt u_\la^h=h_\la(0,t)u_L^h-SU\!\int_0^th_\la(\tau,t)e^{\mu(t-\tau)\D}e_a(\wt{N}\wt g_\la)\,d\tau+r\!\int_0^th_\la(\tau,t)e^{\mu(t-\tau)\D}e_a(\wt{M}\wt g_\la)\,d\tau
\end{equation}
where $u_L$ stands for the free solution of the Stokes system with initial data $u_0.$
Of course, formulae similar to \eqref{2.5} and \eqref{2.6} (involving $h_\la$) may be derived for $\wt u^d_\la$ and $\nabla\wt\Pi_\la.$
Hence, arguing exactly as in the proof of  Proposition \ref{p:stokes}, we  get
$$\displaylines{\quad
\|(\wt u_\la,\nabla\wt\Pi_\la)\|_{{X}^{p,r}_t}+\mu^{1-\f1{2r}}\|\na
\wt u_\la\|_{L^{2r}_t(L^{\f{dr}{2r-1}}_+)}\leq C\Bigl(\mu^{1-\f1r}\|u_0\|_{\dot B^{-1+\f{d}p}_{p,r}(\R^d_+)}\hfill\cr\hfill
+\Bigl(\int_0^t(h_\la(\tau,t))^r\|\wt g_\la(\tau)\|_{L^p_+}^r\,d\tau\Bigr)^{\f1r}\Bigr)\cdotp}
$$
{}From the expression of $g,$  Sobolev embedding \eqref{eq:sobemb}
and H\"older inequality, we infer  that
$$\displaylines{
\quad\Bigl(\int_0^t(h_\la(\tau,t))^r\|\wt g_\la(\tau)\|_{L^p_+}^r\,d\tau\Bigr)^{\f1r}\leq
\|\wt f_\lambda\|_{L^{r}_t(L^{p}_+)}\hfill\cr\hfill+\Bigl(\int_0^t(h_\la(\tau,t))^r\|\nabla v^{d}(\tau)\|_{L^{\f{dr}{2r-1}}_+}^r\|\nabla\wt w^{h}_\la(\tau)\|_{L^{\f{dr}{2r-1}}_+}^{r}\,d\tau\Bigr)^{\f1r}.\quad}
$$
Now, as a consequence of Cauchy-Schwarz inequality, we see that
\begin{equation}\label{eq:aux5}
\biggl(\int_0^t(h_\la(\tau,t))^r\|\na
v^{d}(\tau)\|_{L^{\f{dr}{2r-1}}_+}^r\|\nabla\wt w^{h}_\la(\tau)\|_{L^{\f{dr}{2r-1}}_+}^{r}\,d\tau\biggr)^{\f1r}\leq
\f{\mu^{1-\f1{2r}}}{(2\la r)^{\f1{2r}}}\|\nabla\wt w^{h}_\la\|_{L^{2r}_t(L^{\f{dr}{2r-1}}_+)},
 \end{equation}
whence
$$ \displaylines{\quad
\|(\wt u_\la,\nabla\wt \Pi_\la)\|_{{X}^{p,r}_t}+\mu^{1-\f1{2r}}\|\na\wt u_\la\|_{L^{2r}_t(L^{\f{dr}{2r-1}}_+)}\leq
 C\biggl(\mu^{1-\f1r}\|u_0\|_{\dot
B^{-1+\f{d}p}_{p,r}(\R^d_+)}+\|\wt f_\lambda\|_{L^{r}_t(L^{p}_+)}\hfill\cr\hfill+\f{\mu^{1-\f1{2r}}}{(2\la r)^{\f1{2r}}}\|\nabla\wt w^{h}_\la\|_{L^{2r}_t(L^{\f{dr}{2r-1}}_+)}\biggr)\cdotp\quad} $$
Choosing $\lambda$ so that
$4C\leq(2\la r)^{\frac1{2r}},$ we thus obtain
\begin{multline}\label{eq:aux6}
\|(\wt u_\la,\nabla\wt\Pi_\la)\|_{{X}^{p,r}_t}+\mu^{1-\f1{2r}}\|\na\wt u_\la\|_{L^{2r}_t(L^{\f{dr}{2r-1}}_+)}\cr\leq
 C\bigl(\mu^{1-\f1r}\|u_0\|_{\dot
B^{-1+\f{d}p}_{p,r}(\R^d_+)}+\|\wt f_\lambda\|_{L^{r}_t(L^{p}_+)}\bigr)+\frac14\mu^{1-\frac1{2r}} \|\nabla\wt w^{h}_\la\|_{L^{2r}_t(L^{\f{dr}{2r-1}}_+)}.
\end{multline}
Exactly along the same lines, we have
$$\displaylines{\quad
\|(\wt u_\la,\nabla\wt\Pi_\la)\|_{{X}^{\wt p,r}_t}+\mu^{1-\f1{2r}}\|\na
\wt u_\la\|_{L^{2r}_t(L^{\alpha}_+)}\leq C\Bigl(\mu^{1-\f1r}\|u_0\|_{\dot B^{-1+\f{d}p}_{\wt p,r}(\R^d_+)}\hfill\cr\hfill
+\Bigl(\int_0^t(h_\la(\tau,t))^r\|\wt g_\la(\tau)\|_{L^{\wt p}_+}^r\,d\tau\Bigr)^{\f1r}\Bigr),\quad}
$$
and combining Sobolev  embedding \eqref{eq:sobemb}
and H\"older inequality yields
$$
\|\wt g_\la\|_{L^{\wt p}_+}\leq C\bigl(\|\nabla\wt w^h_\la\|_{L^{\f{dr}{2r-1}}_+}\|\nabla v^d\|_{L^\alpha_+}
+\|\nabla v^d\|_{L^{\f{dr}{2r-1}}_+}\|\nabla\wt w^h_\la\|_{L^\alpha_+}\bigr).
$$
Hence, using an obvious variation on  \eqref{eq:aux5}, we end up with
\begin{multline}\label{eq:aux7}
\|(\wt u_\la,\nabla\wt \Pi_\la)\|_{{X}^{\wt p,r}_t}+\mu^{1-\f1{2r}}\|\na\wt u_\la\|_{L^{2r}_t(L^{\alpha}_+)}\cr\leq
 C\bigl(\mu^{1-\f1r}\|u_0\|_{\dot
B^{-1+\f{d}p}_{\wt p,r}(\R^d_+)}+\|\wt f_\lambda\|_{L^{r}_t(L^{\wt p}_+)}\bigr)
+\frac14\mu^{1-\frac1{2r}} \|\nabla\wt w^{h}_\la\|_{L^{2r}_t(L^{\f{dr}{2r-1}}_+\cap L^{\alpha}_+)}.
\end{multline}
Adding up Inequalities \eqref{eq:aux6} and \eqref{eq:aux7}, we conclude that for all $t>0,$
$$\displaylines{
\|(\wt u_\la,\nabla\wt \Pi_\la)\|_{X^{p,r}_t\cap {X}^{\wt p,r}_t}+\mu^{1-\f1{2r}}\|\na\wt u_\la\|_{L^{2r}_t(L^{\f{dr}{2r-1}}_+\cap L^{\alpha}_+)}
\hfill\cr\hfill\leq
 C\bigl(\mu^{1-\f1r}\|u_0\|_{\dot
B^{-1+\f{d}p}_{p,r}(\R^d_+)\cap\dot B^{-1+\f{d}p}_{\wt p,r}(\R^d_+) }+\|\wt f_\lambda\|_{L^{r}_t(L^p_+\cap L^{\wt p}_+)}\bigr)
+\frac12\mu^{1-\frac1{2r}} \|\nabla\wt w^{h}_\la\|_{L^{2r}_t(L^{\f{dr}{2r-1}}_+\cap L^{\alpha}_+)}.}
$$
Therefore  the linear map $\Psi$ is a contraction on the   Banach space $X^{p,r}\cap X^{\wt p,r}$ endowed with  the norm
$$
\sup_{t>0}\bigl( \|(\wt u_\la,\nabla\wt\Pi_\la)\|_{X^{p,r}_t\cap {X}^{\wt p,r}_t}+\mu^{1-\f1{2r}}\|\na\wt u_\la\|_{L^{2r}_t(L^{\f{dr}{2r-1}}_+\cap L^{\alpha}_+)}\bigr).
$$
The contracting mapping theorem thus ensures the existence and uniqueness of a solution  $(u,\nabla\Pi)$ in $X^{p,r}\cap X^{\wt p,r}$
for System \eqref{eq:stokesconv}.
\medbreak
In order to establish Inequalities  \eqref{eq:aux1} and \eqref{eq:aux2}, we  just have to modify the definition of $h_\la$ as follows:
$$h_\la(s,t)\eqdefa\exp\biggl(-\la\int_s^t  \|\na v^{d}(\tau)\|_{L^{\f{dr}{2r-1}}_+}^{2r}\,d\tau\biggr)\cdotp
$$
Then   starting from Formula \eqref{eq:aux5a}
with $$\wt g_\la=(\wt f^h_\la-v^d\pa_d\wt u^h_\la,\wt f^d_\la-\wt u^h_\la \cdot\nabla_hv^d+v^d\dive_h \wt u^h_\la),$$
 following the computations leading to \eqref{eq:aux6},
and using  \eqref{add.2} which ensures that the horizontal components of $u_L^h$ may be expressed in terms
of $u_0^h$ only, yields \eqref{eq:aux1}.
By a similar device, keeping in mind the (new) definition of $h_\la,$ we get Inequality \eqref{eq:aux2}.
\end{proof}

\noindent {\bf Acknowledgments.} The authors warmly thank Marius Paicu for pointing them
out  Formula \eqref{add.2} which enabled them  to improve the original (isotropic) smallness condition \eqref{small3}
into  \eqref{small1} in the present version.

 This work was initiated while the
second author was visiting the laboratory  of mathematics of
Universit\'e Paris-Est Cr\'eteil in April 2012 and has benefitted
from the ANR grant awarded  to the Labex B\'ezout ANR-10-LABX-58. He
would like to thank the hospitality of the laboratory. The second
author  is  also partially supported by NSF of China under Grant
10421101 and 10931007,  and innovation grant from National Center
for Mathematics and Interdisciplinary Sciences.

\end{document}